\documentclass[oneside]{amsart}

\usepackage[width=160mm,top=30mm,bottom=32mm]{geometry}
\usepackage{caption}
\usepackage{subcaption}
\usepackage[centertags]{amsmath}
\usepackage{bbm, amsfonts,amssymb, amsthm}
\usepackage{mathrsfs}
\usepackage{fancyhdr}
\usepackage{graphicx}
\usepackage{amsmath}
\usepackage{tikz-cd} 
\usepackage{physics}
\usepackage[sorting=nyt]{biblatex}
\usepackage{mathtools}
\usepackage{hyperref}
\usepackage{lipsum}

\newtheorem*{Theorem*}{Theorem}
\newtheorem{Problema}{Problem}

\newtheorem{TheoremA}{Theorem}

\newtheorem{TheoremB}{Theorem}

\newtheorem{TheoremC}{Theorem}

\newtheorem{TheoremD}{Theorem}

\newtheorem{thm}{Theorem}[section]

\newtheorem{lema}[thm]{Lemma}
\newtheorem{prop}[thm]{Proposition}
\newtheorem{cor}[thm]{Corollary}
\newtheorem{exam}[thm]{Example}
\newtheorem{defn}[thm]{Definition} 
\newtheorem{rem}[thm]{Remark}

\newcommand{\cC}{\mathcal{C}}
\newcommand{\cF}{\mathcal{F}}

\newcommand{\cJ}{\mathcal{J}}
\newcommand{\cL}{\mathcal{L}}

\newcommand{\cQ}{\mathcal{Q}}

\newcommand{\cU}{\mathcal{U}}
\newcommand{\cX}{\mathcal{X}}

\newcommand{\bB}{\mathbb{B}}
\newcommand{\bC}{\mathbb{C}}

\newcommand{\bN}{\mathbb{N}}

\newcommand{\bR}{\mathbb{R}}
\newcommand{\bS}{\mathbb{S}}

\newcommand{\bZ}{\mathbb{Z}}

\newcommand{\nc}{\newcommand}
\nc{\p}{\partial}
\nc{\ol}{\overline}

\def\@maketitle{%
  \normalfont\normalsize
  \let\@makefnmark\relax  \let\@thefnmark\relax
  \ifx\@empty\@date\else \@footnotetext{\@setdate}\fi
  \ifx\@empty\@subjclass\else \@footnotetext{\@setsubjclass}\fi
  \ifx\@empty\@keywords\else \@footnotetext{\@setkeywords}\fi
  \ifx\@empty\thankses\else \@footnotetext{%
    \def\par{\let\par\@par}\@setthanks}\fi
  \@mkboth{\@nx\shortauthors}{\@nx\shorttitle}%
  \global\topskip42\p@ 
  \@settitle
  \ifx\@empty\authors \else \@setauthors \fi
  \ifx\@empty\@commby
  \else
    \baselineskip18\p@
    \vtop{\centering{\footnotesize\@commby\@@par}%
      \global\dimen@i\prevdepth}\prevdepth\dimen@i
  \fi
  \ifx\@empty\@dedicatory
  \else
    \baselineskip18\p@
    \vtop{\centering{\footnotesize\itshape\@dedicatory\@@par}%
      \global\dimen@i\prevdepth}\prevdepth\dimen@i
  \fi
  \@setabstract
  \normalsize
  \dimen@34\p@ \advance\dimen@-\baselineskip
  \vskip\dimen@\relax
}

\definecolor{myteal}{HTML}{00797d}
\hypersetup{
    colorlinks=false,%
    citebordercolor=green,%
    filebordercolor=red,%
    linkbordercolor=blue,%
    urlbordercolor=red}

\title[Minimal surfaces with closed curvature lines]{Minimal surfaces with closed curvature lines}
\author{CARLOS ANDRÉS TORO CARDONA}
\address{Instituto de Matemática Pura e Aplicada-Instituto de Matemática e Estatística, Universidade Federal do Rio Grande do Sul, Brazil}
\email{carlos.toroc@impa.br}
\date{}

\begin{document}
\begin{abstract}
We investigate complete non-orientable minimal surfaces of finite total curvature in $\bR^3$ such that their ends are foliated by closed lines of curvature. This condition on the ends is necessary if they have a piece inside some Euclidean ball that is free boundary. It turns out this is a rigid situation, and we are able to show, among further obstructions, that there are no such surfaces with one end.
\end{abstract}

\maketitle

\section{Introduction}
This article is motivated by the hope of constructing a non-orientable free boundary minimal surface in the unit ball $\bB^3$. The method of construction that we propose is to obtain such a surface as the intersection of a complete non-orientable minimal surface of finite total curvature in the whole Euclidean space with a sphere $\bS_R^2$, of sufficiently large radius $R$ and arbitrary center, in an orthogonal way along the ends of the extended minimal surface. After an appropriate translation and a rescaling, the piece of the surface lying in the interior of the sphere $\bS^2_R$ would become a free boundary minimal surface in the unit ball $\bB^3.$

Such a method of construction turns out to be extremely rigid, even if we look at the question in the case of free-boundary orientable minimal surfaces. The plane and the critical catenoid seem to be the only known free boundary minimal surfaces in $\bB^3$ which are produced in this way, but so far, to the best of our knowledge, there is no proof of such assertion.

Suppose that $X:M\to \bR^3$ is a complete minimal immersion of finite total curvature, where $M=\Sigma\setminus \{p_1,\ldots p_N\}$ and $\Sigma$ is a compact surface without boundary. By a theorem due to Jorge-Meeks \cite[Theorem 1]{jorgemeeks}, for sufficiently large radius $R$, the set $X^{-1}(X(M)\cap\bS^2_R)$ is composed of a finite number of closed curves $\gamma_1,\ldots, \gamma_N$ circling the end points $p_1,\ldots, p_N$ respectively.

By the classical Joachimsthal's Theorem, if the intersection of $X(M)$ with $\bS^2_R$ is orthogonal, then each $\gamma_i$ is a line of curvature on $M$. Let us denote by $\Sigma$ the piece of the surface $M$ which lies in the interior of the corresponding ball of radius $R$, so that $\partial \Sigma=\bigcup_{i=1}^N\gamma_i$. Since the unit conormal vector $\nu_i$ of $\Sigma$ along $\gamma_i$ coincides with the unit normal vector to the sphere $\bS^2_R$, the geodesic curvature of $\gamma_i$ on $\Sigma$ is the same as the normal curvature of $\gamma_i$ on the sphere, which is constant and equal to $\frac{1}{R}$.

The converse of these statements was investigated by J. Lee and E. Yeon. As we will see, they provided, in particular, a clear method of construction of free boundary minimal surfaces as restrictions of complete minimal surfaces in $\bR^3$ of finite total curvature. We summarize their result as follows:
\begin{Theorem*}[{\textit{Cf. }J. Lee, E. Yeon \cite[Proposition 3.3]{JLee}}]
Let $M$ be a minimal surface in $\bR^3$ of finite total curvature. If the intersection of $M$ with a sphere $\bS^2_R$ is orthogonal along finitely many closed curves $\gamma_i$, then each $\gamma_i$ is a closed line of curvature of constant geodesic curvature $\frac{1}{R}$ on $M$. Conversely, if $\gamma_1,\ldots, \gamma_N$ is a collection of closed lines of curvature with non-zero geodesic curvature $c_i$ on $M$, then $M$ intersects orthogonally a collection of spheres $\bigcup_{i=1}^N\bS^2_{R_i}$ of radii $R_i=\frac{1}{\abs{c_i}}$ and possibly different centers.
\end{Theorem*}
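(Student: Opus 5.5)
The plan is to prove the two directions separately. Both rest on the Darboux frame along a curve on $M$ and on Joachimsthal's theorem. Let $\gamma$ be a curve on $M$ parametrized by arc length, with unit tangent $T$. Let $N$ be the unit normal of $M$ along $\gamma$ (locally defined, which is enough since all statements are local along $\gamma$ up to sign). Let $\nu = N\times T$ be the conormal. The Darboux equations are
\[
T' = k_g\,\nu + k_n N,\qquad \nu' = -k_g T + \tau_g N,\qquad N' = -k_n T - \tau_g \nu,
\]
and $\gamma$ is a line of curvature exactly when the geodesic torsion $\tau_g$ vanishes.

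For the direct statement, I would assume that $M$ meets $\bS^2_R$ (center $c$) orthogonally along $\gamma_i$. Along $\gamma_i$ the normal $N$ of $M$ is then tangent to the sphere, and the sphere's unit normal $(\gamma-c)/R$ is tangent to $M$ and orthogonal to $T$. Hence $\nu = \pm(\gamma-c)/R$. Differentiating gives $\nu' = \pm T/R$. Comparing with the Darboux equation $\nu' = -k_gT+\tau_g N$ yields $\tau_g = 0$, so $\gamma_i$ is a line of curvature (this is Joachimsthal). It also yields $k_g = \mp 1/R$, so $|k_g| = 1/R$ is constant. Closedness of $\gamma_i$ follows from the hypothesis that the intersection consists of closed curves, which the Jorge--Meeks theorem quoted above guarantees for large $R$.

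For the converse, I would take a closed line of curvature $\gamma$ with $\tau_g\equiv 0$ and geodesic curvature $c$. The key step, and the main obstacle, is to show that $c$ is constant along $\gamma$; this is where minimality and finite total curvature must enter, since for an arbitrary surface a line of curvature need not have constant $k_g$. I would work with the Weierstrass data $(g,\eta)$ on a local conformal coordinate $z$. In this coordinate the lines of curvature are the curves along which the Hopf differential $Q = dg\,\eta$ satisfies $Q(\dot z,\dot z) \in \bR$. By a holomorphic change of coordinate $w$ with $dw^2 = Q$ (valid away from umbilics), a line of curvature becomes a horizontal or vertical segment in $w$. The metric is then $ds^2 = (|g|+|g|^{-1})^2|dw|^2/4$-type in these coordinates, the log of the conformal factor being $\log\cosh$ of a harmonic function $u=\log|g|$ (up to normalization). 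I would compute $k_g$ along a horizontal segment as the normal derivative of $\log$ of the conformal factor divided by the conformal factor. I would then check that constancy of this quantity is precisely the condition in the Lee--Yeon framework. I expect that their actual content is the converse implication: a closed line of curvature with $\tau_g=0$ on which $k_g$ is constant — and I would read ``with non-zero geodesic curvature $c_i$'' as meaning constant $c_i$, reducing the obstacle to the geometric argument below. If constancy is not assumed, I would try to derive it from the Codazzi equation along a line of curvature; failing that, I would adopt the constant-$k_g$ reading.

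With $k_g\equiv c\neq 0$ constant and $\tau_g\equiv 0$, I would define the candidate center $p(s) = \gamma(s) + \nu(s)/c$. Differentiating gives
\[
p' = T + \frac{1}{c}(-cT + 0\cdot N) = 0,
\]
so $p$ is a constant point. Then $|\gamma - p| = 1/|c| = R$, so $\gamma$ lies on the sphere $\bS^2_R$ centered at $p$. Moreover $\gamma-p = -\nu/c$ is tangent to $M$, so the sphere's normal lies in $T_\gamma M$, and the intersection is orthogonal along $\gamma$. Applying this to each $\gamma_i$ gives radii $R_i = 1/|c_i|$ with centers $p_i$ that may differ, which completes the plan.
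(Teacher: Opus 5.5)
Your argument is correct once you read the $c_i$ as constants, which is the right reading. The direct implication is essentially the paper's. The paper only sketches it in the introduction: Joachimsthal's theorem gives the line-of-curvature property, and the conormal of $M$ along $\gamma_i$ coincides with the unit normal of $\bS^2_R$, so the geodesic curvature of $\gamma_i$ in $M$ equals its normal curvature in the sphere, namely $\frac{1}{R}$. Differentiating $\nu=\pm(\gamma-c)/R$ against the Darboux equations gives both facts at once.

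The paper gives no proof of the converse; it simply cites Lee--Yeon. Your construction of the center $p=\gamma+\nu/c$ with $p'=0$ is the standard argument and fills that in. Note that $\nu/c$ is unchanged when the local normal is flipped, so $p$ is well defined even along a one-sided loop.

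You should delete the middle paragraph. Constancy of $c_i$ is a hypothesis of the converse; that is the only way ``$R_i=1/\abs{c_i}$'' makes sense. The paper itself, right after Problem 1, treats constant geodesic curvature of the closed curvature lines as an extra condition to impose inside $\cC$. It is not something minimality or finite total curvature delivers.

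A Codazzi argument along the line cannot produce it, because that argument is local and there are local counterexamples. On Enneper's surface ($g=z$, $\eta=dz$, $Q_H=-\frac12 dz^2$) the lines $\mathrm{Im}\,z=y_0$ are lines of curvature, yet $\abs{k_g}=4\abs{y_0}/(1+x^2+y_0^2)^2$ varies along them.

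The metric formula you wrote in the coordinate $w$ with $dw^2=Q$ is also wrong in general. One has $ds^2=\frac14(\abs{g}+\abs{g}^{-1})^2\abs{dh}^2$, which in the $w$ coordinate carries an extra factor $\abs{(\log g)_w}^{-2}$. That factor is constant only when $\log g$ is affine in $w$.

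Finally, neither direction uses minimality or finite total curvature. Those hypotheses matter only, through Jorge--Meeks, to guarantee that intersections with large spheres are closed curves around the ends.
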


We study the class $\cC$ of complete minimal surfaces of finite total curvature in $\bR^3$ with \textit{closed curvature lines} around the ends, see Definition \ref{classC}, in order to investigate the following:
\begin{Problema}\label{problemrestriction}
    What are the free boundary minimal surfaces in the unit ball $\bB^3$ which are the restriction of a complete minimal surface in $\bR^3$ of finite total curvature?
\end{Problema}

Indeed, one would need to study, inside the class $\cC$, the minimal surfaces $\Sigma$ whose closed lines of curvature $\gamma_i$ all have the same constant geodesic curvature, and then be able to control the centers of the associated spheres $\bS^2_i$ intersecting orthogonally the surface $\Sigma$ along $\gamma_i$, in order to check that they all coincide.

We determine obstructions theorems and topological conditions for the class $\cC$. For minimal surfaces with a single end we found:
\begin{TheoremA}\label{nonexistenceoneendanygenus}
\textit{There is no complete single-ended minimal surface with finite total curvature in $\bR^3$ which contains a closed curvature line around its end.}
\end{TheoremA}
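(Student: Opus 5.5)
The plan is to translate the closed curvature line into a condition on the Weierstrass data at the end, then contradict the balancing (Stokes) identity for a harmonic coordinate function. I treat the orientable case first and reduce the non-orientable case to it through the oriented double cover.

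\textbf{Orientable case: local analysis at the end.} Write $M=\Sigma\setminus\{p\}$ with Weierstrass data $(g,\omega=f\,dz)$ in a coordinate $z$ centred at $p$. By Osserman, $g$ and $f$ extend meromorphically to $p$. After a rotation of $\bR^3$, assume $g(p)=0$, with $g=bz^k+\dots$, $k\ge 1$, $b\neq 0$. Completeness forces a pole $f=az^{-n}+\dots$, $a\neq0$, $n\ge 2$.

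Lines of curvature are the trajectories of the Hopf differential $Q=\omega\,dg=fg'\,dz^2$, that is, the curves on which $Q(\dot\gamma,\dot\gamma)$ is real. I use the local structure theory of quadratic differentials (Strebel). Near a pole of order $\ge 3$ there are no closed trajectories encircling it. Near a simple pole or a regular point the same holds for small loops, and hence for loops far out on the end. Near a double pole $c\,dz^2/z^2$, closed trajectories around the pole occur exactly when the leading coefficient $c$ is real and negative in the appropriate normalization.

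Since the hypothesis gives a closed curvature line around the end, I expect to conclude that $Q$ has a double pole at $p$, i.e. $n=k+1$, with leading coefficient $abk\in\bR\setminus\{0\}$. Making this step rigorous is the main obstacle: the given closed line lies somewhere on the end, not necessarily in a tiny neighbourhood of $p$. I would handle this by following the foliation of the end by curvature lines, or by using the finite, hence bounded, number of zeros of $Q$ to push the closed trajectory into a punctured disc where $Q$ is in normal form.

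\textbf{Orientable case: the contradiction.} Granting $n=k+1$, the form $dx_3$-holomorphic part $gf\,dz$ equals $ab\,z^{-1}\,dz+(\text{holomorphic})$. It therefore has a simple pole with residue $ab\neq 0$, and $ab$ is real up to the factor fixed by the reality of $abk$. So $x_3$ behaves like $c\log|z|$ with $c\neq 0$, and the end is catenoidal of multiplicity $k$ with nonzero vertical flux.

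On the other hand, the loop around the single puncture bounds $\Sigma$ minus a disc. Because $\Phi=(\phi_1,\phi_2,\phi_3)$ is holomorphic there, Stokes gives $\oint\Phi=0$. In particular $\operatorname{Res}_p(gf\,dz)=0$, which contradicts $ab\neq0$.

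\textbf{Non-orientable case.} Pass to the oriented two-sheeted cover $\tilde M=\tilde\Sigma\setminus\{q_1,q_2\}$, with its antiholomorphic deck involution $I$ satisfying $g\circ I=-1/\bar g$ and $I^*\Phi=\bar\Phi$. The closed curvature line lifts to closed curvature lines around both $q_1$ and $q_2$. The local argument above, applied at $q_1$, shows that $x_3$ has a logarithmic singularity $c\log|z|$, $c\neq0$, at $q_1$.

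Since $x_3\circ I=x_3$, the singularity at $q_2=I(q_1)$ is $c\log|w|$ with the same sign of $c$, so both ends rise, or both fall, in the $x_3$ direction. Now $x_3$ is harmonic on $\tilde\Sigma\setminus\{q_1,q_2\}$. Stokes applied to $*dx_3$ says the sum of its logarithmic coefficients must vanish; here that sum is $2c\neq 0$, a contradiction. Equivalently, $x_3$ would tend to $-\infty$ at both punctures, or to $+\infty$ at both, violating the maximum principle.

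This same Stokes argument for $x_3$ also covers the orientable case directly, so the proof unifies. The only delicate ingredient remains the trajectory-structure step forcing a double pole of $Q$ at the end.
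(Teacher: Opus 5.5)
Your argument is correct and is essentially the paper's proof. The paper also uses Corollary \ref{CriterioclassC} to turn the closed line into a double pole of $Q_H=-\frac{1}{2}\frac{dg}{g}\otimes dh$ with real nonzero quadratic limit, so that after rotating to $g(Q_1)=0$ the height differential has simple poles with real nonzero residues at $Q_1$ and $Q_2=\tau(Q_1)$; the relation $\tau^*dh=\overline{dh}$ (your $x_3\circ I=x_3$) then forces the two residues to be equal, contradicting the residue theorem (your Stokes or maximum-principle step for $x_3$).

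The localization you flag as delicate is built into Definition \ref{classC}, which places the closed line in a sufficiently small punctured neighbourhood of the end, so it encloses no umbilics. That hypothesis is genuinely needed: for a general quadratic differential, a closed leaf not close to the puncture can encircle zeros together with a pole of order at least three, so a ``pushing'' argument could not handle an arbitrary closed line.
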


As a consequence, none of the orientable free boundary minimal surfaces with one boundary component and any genus discovered by M. Schulz, G. Franz and A. Carlotto can be obtained as a restriction of the Chen-Gackstatter surface or its generalizations to higher genus. Furthermore, in the non-orientable context, the Klein bottle of F. Lopez \cite{Lopez} and none of its generalizations to higher genus found by F. Martin \cite{MartinSurfaces} with a single end can intersect a sufficiently large sphere in an orthogonal way.

Similarly, we obtain a further restriction:
\begin{TheoremB}\label{obstructionplanar}
   The embedded planar ends of a complete minimal immersions in $\bR^3$ of finite total curvature do not admit a closed line of curvature.
\end{TheoremB}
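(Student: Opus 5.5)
The plan is to reduce the statement to an index computation for the line field of curvature lines near the puncture. That line field is determined by the Hopf differential. Let $p$ be an embedded planar end and take a conformal coordinate $z$ on a punctured disc $D^*=\{0<|z|<\varepsilon\}$ centered at $p$. After a rotation we may assume the limit normal at $p$ is vertical, so the Gauss map satisfies $g(0)=0$. Write the Weierstrass data as $(g,dh)$, so that
\[
X=\operatorname{Re}\int\Big(\tfrac12\big(\tfrac1g-g\big),\ \tfrac{i}{2}\big(\tfrac1g+g\big),\ 1\Big)\,dh .
\]

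First I pin down the orders at the end. Since $p$ is an embedded end of finite total curvature, $\Phi_1,\Phi_2$ have poles of order exactly $2$ at $z=0$. If $g$ vanishes to order $k\ge1$ at $0$, then $dh=z^{k-2}\varphi(z)\,dz$ with $\varphi(0)\neq0$. The end is planar, so the height $x_3$ stays bounded and there is no logarithmic growth. This forces $dh$ to be holomorphic at $0$, i.e. $k\ge2$.

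The Hopf differential of the immersion is, up to a nonzero constant,
\[
Q=\frac{dg\,dh}{g}=z^{k-3}\psi(z)\,dz^2,\qquad \psi(0)\neq0 .
\]
Hence $Q$ has at worst a simple pole at the end: its order is $m=k-3\ge -1$. Shrinking $\varepsilon$, we may assume $Q$ has no zeros in $D^*$, i.e. there are no umbilics on the end neighborhood.

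Lines of curvature are the integral curves of the horizontal (or vertical) foliation of $Q$, namely the curves along which $Q(\gamma',\gamma')$ is real. Now suppose $\gamma$ is a closed line of curvature around the end; this is the setting of the definition of the class $\cC$. We may assume $\gamma$ lies in $D^*$, or at least that it bounds, together with the puncture, a disc containing no umbilics other than possibly $p$. Then $\gamma$ is a closed regular integral curve of a line field on a disc whose only singularity is at $z=0$. By the Poincaré–Hopf theorem for line fields (equivalently, the rotation number of $\gamma'$ along a closed leaf is $1$), the index of that singularity must equal $+1$.

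On the other hand, the standard local model gives $Q\sim z^m dz^2$ near $0$, and its trajectory structure has index $-m/2$ at $0$. The index equals $+1$ only when $m=-2$, i.e. only for a double pole with real leading coefficient of the right sign; this is the catenoidal case. Since $m\ge -1$ for a planar end, the index is at most $1/2$, a contradiction.

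The main obstacle I expect is making the index argument rigorous near a pole rather than a zero of $Q$. I plan to handle this directly. Write $Q=z^m\psi\,dz^2$ and compute the winding of $\gamma'$ along the closed curve. The condition $\operatorname{Im}\big(z^m\psi(z)\gamma'(t)^2\big)=0$ gives
\[
2\,\mathrm{wind}(\gamma')=-m\,\mathrm{wind}(\gamma)-\mathrm{wind}(\psi\circ\gamma).
\]
Here $\mathrm{wind}(\psi\circ\gamma)=0$ since $\psi\neq0$ on the disc. A simple closed curve around $0$ has $\mathrm{wind}(\gamma')=\mathrm{wind}(\gamma)=\pm1$. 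This gives $m=-2$, which bypasses Poincaré–Hopf entirely. A secondary point is to justify that a closed curvature line "around the end" can be taken inside the umbilic-free punctured disc. If not, the same winding computation applies on the annulus between $\gamma$ and a small circle, accounting for finitely many interior zeros of $Q$. Each interior zero contributes its order to $\mathrm{wind}(\psi\circ\gamma)$ with a positive sign, which only makes $m=-2$ harder to reach. So the obstruction persists.
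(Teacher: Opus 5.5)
Your argument is correct. It rests on the same core computation as the paper's proof: with $g(p)=0$ we have $Q_H=-\tfrac12\frac{dg}{g}\,dh$, and $dg/g$ has a simple pole, so $\operatorname{ord}_p Q_H=\operatorname{ord}_p(dh)-1$. For an embedded end, planarity amounts to $dh$ having no residue.

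The difference is in how the curvature-line hypothesis becomes information about $Q_H$. The paper argues forward. It cites Corollary \ref{CriterioclassC}, i.e. the Gutierrez--Sotomayor local classification of Theorem \ref{dynamicslinescurvature}, to get a double pole of $Q_H$. It then deduces that $dh$ has a simple pole, hence a nonzero residue, so the end is catenoidal. You argue by contrapositive and replace that citation with a self-contained index computation. The identity $2\,\mathrm{wind}(\gamma')+m\,\mathrm{wind}(\gamma)+\mathrm{wind}(\psi\circ\gamma)=0$, together with the Umlaufsatz, forces $m=-2-Z$, where $Z$ counts the umbilics enclosed.

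The paper's route is shorter given the classification, which it uses elsewhere anyway. It also never needs to compute $\operatorname{ord}_p(dh)=k-2$ from the Jorge--Meeks multiplicity: it only uses that a simple pole automatically has nonzero residue.

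Your route has three advantages. It is independent of the Gutierrez--Sotomayor normal form. It shows that only the order of $Q_H$ matters; the reality of the quadratic limit plays no role in this obstruction. It also covers closed lines that enclose umbilics as well as the end, which the paper sidesteps by working in a small umbilic-free neighbourhood as allowed by Definition \ref{classC}.

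One small inaccuracy, not used in your logic: index $+1$ characterizes $m=-2$ regardless of the leading coefficient. The reality and sign of that coefficient distinguish closed leaves from spirals (case (b.2)), not the index.
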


An application of Theorem \ref{obstructionplanar} is that the Costa surface \cite{CostaArticle} and the corresponding Costa-Hoffman-Meeks family \cite{CostaHigherGenus} of higher genus cannot intersect a ball of sufficiently large radius in an orthogonal way. Even though the Kaopuleas-Li \cite{KAPOULEASLI} and Ketover \cite{KETOVERG0} free boundary minimal surfaces are the free boundary analogues of the Costa-Hoffmann-Meeks surfaces for sufficiently large genus, it is interesting to remark that it is not known if there exists a free boundary minimal analog in the unit ball of the Costa's surface.

While Theorem \ref{nonexistenceoneendanygenus} contains our most definite conclusion about this method of construction, we are able to obtain several partial results in the case of surfaces with more than one end. First we constructed, to the best of our knowledge, a new deformation family of a classical surface due to Oliveira \cite{Oliveira}, with the topology of a twice punctured projective space, having a closed curvature line around exactly one of its ends:
\begin{TheoremC}\label{secondsurface}
    \textit{There exists a one-parameter family of complete minimal immersions of a projective space with two ends in $\bR^3$ and total curvature $-10\pi$. This family have only one catenoidal end, and moreover, it is foliated by closed lines of curvature.}
\end{TheoremC}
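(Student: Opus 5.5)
The plan is to build the family explicitly through the Weierstrass representation for non-orientable minimal surfaces, following Meeks and Oliveira. Write $\mathbb{RP}^2\setminus\{2\text{ points}\}$ as the quotient of $\overline{\bC}\setminus\{0,\infty,a,-1/\bar a\}$ by the antiholomorphic involution $I(z)=-1/\bar z$. Choose $I$ so that the punctures $\{0,\infty\}$ form one $I$-orbit (one end) and $\{a,-1/\bar a\}$ the other. On the orientable double cover we then need Weierstrass data $(g,\eta)$ satisfying the compatibility conditions
\[
g\circ I=-1/\bar g,\qquad I^*\eta=\overline{g^2\eta}.
\]
We take $g$ of the form used by Oliveira, a rational function of degree $5$ with an extra real parameter $t$ in its coefficients. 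Since the total curvature of the quotient is $-2\pi\deg g$, this gives $-10\pi$ (the double cover has total curvature $-20\pi$). The form $\eta$ is chosen with poles at the punctures: the pole orders at $0,\infty$ are made high enough to give a non-catenoidal (non-embedded) end, while at $a$ they give a simple catenoidal behaviour. The first steps are to verify the compatibility relations, regularity (poles of $g$ are compensated by zeros of $\eta$ of twice the order), and completeness. These are routine computations.

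The second step is the period problem. By the symmetry under $I$ it suffices to kill the real periods of $\Phi=(\tfrac12(1-g^2)\eta,\tfrac i2(1+g^2)\eta,g\eta)$ around one loop encircling $0$ and one encircling $a$. I would impose additional symmetries, namely invariance under $z\mapsto\bar z$ and under a rotation $z\mapsto\omega z$ compatible with the configuration, so that most residues vanish automatically. This should leave a single real equation that fixes $a=a(t)$, or a free parameter in $\eta$, as a function of $t$. Its solvability for $t$ in an interval would follow from an intermediate value argument, or by invoking the implicit function theorem at Oliveira's surface. From the Jorge--Meeks end-order formula, one end is catenoidal and the other is not.

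The third step, which I expect to be the main obstacle, is the curvature-line foliation. A curve $\gamma$ is a line of curvature iff $\mathrm{Im}\,(\tfrac{dg}{g}\,\eta\,(\gamma')^2)=0$, equivalently after a coordinate change $\tfrac{dg\,\eta}{g}=c\,\tfrac{dz^2}{z^2}$ locally. I would try to arrange $g$ and $\eta$ so that the Hopf differential $\tfrac{dg\,\eta}{g}$ has the form $c\,\tfrac{dz^2}{z^2}$ with $c<0$ on a punctured neighbourhood of the catenoidal end, after centering coordinates there. Its horizontal trajectories are then the circles $|z|=r$, which descend to closed curvature lines foliating that end. If this holds globally, the circles $|z|=r$ foliate the whole surface, as in a surface of revolution type structure.

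Checking that this form is preserved by the deformation in $t$, while the period condition still holds, is the delicate balance. If it fails for Oliveira's exact data, I would modify $\eta$ by a factor that is constant on circles and symmetric under $I$.
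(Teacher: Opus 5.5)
Your framework (Weierstrass data on $\bS^2$ minus four points, symmetries to reduce the period problem, and reading the end behaviour off $Q_H$) is the paper's. But there is a genuine gap at exactly the point that makes the statement true.

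\textbf{The catenoidal end is not established.} The inference ``from the Jorge--Meeks end-order formula, one end is catenoidal'' is wrong. Jorge--Meeks multiplicity $\nu=1$ only gives an \emph{embedded} end, and embedded ends can be planar. This is what happens for Oliveira's surfaces: in the limiting data at $z=0$, $g$ has a pole of order $3$ and $dh$ is holomorphic. It also happens for the symmetric deformation of them in Theorem \ref{CarlosTFirst}, whose embedded end is planar. By Theorem \ref{PlanarEndsDoNotAdmitClosedLines}, such an end carries no closed curvature line at all.

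So neither an implicit-function deformation of Oliveira's surface nor an ansatz whose symmetries make ``most residues vanish automatically'' yields the theorem by itself. The symmetries are in fact the danger, because the one residue that must \emph{not} vanish is $\mathrm{Res}\,dh$ at the embedded end; a symmetry reversing the vertical direction there forces it to be $0$. The missing idea is to use this flux as the deformation parameter and then actually solve the remaining period equations. Your outline stops at ``this should leave a single real equation''. Note also that $z\mapsto -z$ preserves $\{0,\infty,a,-1/a\}$ only if $|a|=1$, so $a$ cannot also be the free unknown.

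The paper does exactly this. On $\bS^2\setminus\{0,\infty,\pm1\}$ it takes $g_\lambda$ with a simple zero at $0$ and zeros/poles at $\pm p,\pm q$, odd under $z\mapsto -z$. This is a rotation about the vertical axis, which kills $\alpha(0),\beta(0)$ but not the vertical flux. It takes $\eta_\lambda$ with a double pole at $0$ and poles of order $4$ at $\pm1$, and imposes $q^2=i\lambda/p^2$, so that $\mathrm{Res}_0\,dh=ip^2q^2=-\lambda\in\bR^{*}$. The conditions of Lemma \ref{simplifiedresidueproblem} at $z=1$ are then solved by the explicit $p_\lambda^2$.

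\textbf{Your ``main obstacle'' is automatic.} Take an end where, after a rotation, $g$ has a simple zero and $dh$ a simple pole, as at an embedded catenoidal end. Then $\frac{dg}{g}$ has residue $1$ there. The residue of $dh$ is real by the period condition ($\mathrm{Re}\oint dh=0$) and non-zero exactly when the end is catenoidal. Hence $Q_H=-\frac12\frac{dg}{g}\,dh$ has a double pole with quadratic limit $-\frac12\mathrm{Res}\,dh\in\bR^{*}$, and Corollary \ref{CriterioclassC} gives the closed lines. This is how the paper concludes, with $\lim_{z\to0}z^2\phi=\lambda/2$.

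You need not force $Q_H$ into the form $c\,dz^2/z^2$ in a chosen coordinate, since only this invariant leading coefficient matters. Multiplying $\eta$ by extra factors ``constant on circles'' would destroy whatever period solution you had.

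\textbf{Further corrections.}
\begin{enumerate}
\item The Hopf differential is $-\frac12\,dg\,\eta=-\frac12\frac{dg}{g}\,dh$, not $\frac{dg}{g}\,\eta$.
\item The compatibility condition is $I^{*}\eta=-\overline{g^2\eta}$. With your sign one gets $I^{*}\Phi=-\overline{\Phi}$, and $X$ does not descend to $\mathbb{RP}^2$.
\item The global ``surface of revolution'' picture is not what the statement asserts; cf.\ Theorem \ref{CarlosTSecond}, which concerns only the catenoidal end. Moreover, by Corollary \ref{LinesCurvaturerelatedpoints} a given principal foliation is circular around one lift of that end and radial around the other.
\end{enumerate}
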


We were also able to provide the following obstruction theorem in the case of a twice punctured projective space with \textit{parallel ends}:
\begin{TheoremD}\label{theorem6}
   \textit{There does not exist a complete minimal immersion of a projective space with two parallel ends and finite total curvature with closed curvature lines around both ends.}
\end{TheoremD}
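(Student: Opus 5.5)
The plan is to work on the oriented two-sheeted cover and reduce everything to the Weierstrass data. Let $X:M\to\bR^3$ be such an immersion, with $M$ a projective plane minus two points. Its orientable double cover is $\widetilde M=\overline{\bC}\setminus\{0,\infty,a,-1/\bar a\}$, with deck transformation $I(z)=-1/\bar z$. After a rotation we may assume the lifted ends are $0,\infty$ (swapped by $I$) and $a,-1/\bar a$ (swapped by $I$). The Weierstrass data $(g,\eta)$ on $\widetilde M$ are rational and satisfy the compatibility relations
$$g\circ I=-\frac{1}{\bar g},\qquad I^*\eta=\overline{g^2\eta}.$$
The hypothesis that the two ends are parallel means the limiting normals at the ends are collinear. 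Rotating the surface, we normalize $g(0)=0$, hence $g(\infty)=\infty$ by the first relation. Then $g(a)$ and $g(-1/\bar a)$ lie in $\{0,\infty\}$, and these values are again exchanged by the relation. So near each puncture $g\sim c\,z^{\pm k}$ in a local coordinate, and all ends are "vertical".

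Next I would express the curvature-line condition through the Hopf differential $Q=\eta\, dg$, which is rational in $z$; its horizontal trajectories are the lines of curvature. A family of closed curvature lines winding once around a puncture $p$ forces $p$ to be a pole of order exactly $2$ of $Q$ whose leading coefficient satisfies
$$Q=\Bigl(\frac{c_p}{(z-p)^2}+\cdots\Bigr)dz^2,\qquad c_p\in\bR,\ c_p<0.$$
Indeed, only then does the quadratic differential have a ring domain of closed trajectories around $p$, modelled on circles. I would then compute $c_p$ at a vertical end from the local expansions $g=\alpha z^{k}(1+\dots)$ and $\eta=\beta z^{-k-2}(1+\dots)\,dz$ (in a coordinate centred at the end). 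This gives $c_p=k\alpha\beta$ plus corrections, and I would relate this quantity to the residue of $g\eta$ at $p$, i.e. to the vertical flux of that end, together with the end's order. Finally, the relation $I^*Q=\overline{Q}$ shows that the conditions at $0$ and $\infty$ (and likewise at $a$ and $-1/\bar a$) are conjugate to one another. So the content lies in one condition per end of $M$.

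The contradiction should then come from combining three ingredients. The first is the period conditions, namely zero real periods of $\Phi=(\tfrac12(1-g^2)\eta,\tfrac i2(1+g^2)\eta,g\eta)$. The second is the balancing of fluxes, which says the sum of residues of $g\eta$ is zero. The third is the sign and reality constraints $c_p<0$. The expected mechanism is this: the deck transformation reverses orientation, so the vertical flux of the end at $0$ and that of its image at $\infty$ are related with a sign flip incompatible with both $c_0<0$ and $c_\infty<0$ once parallelism pins $g$ to $0$ or $\infty$ at all four punctures. The same then holds for the pair $a,-1/\bar a$, and balancing forces the remaining fluxes to vanish. A vanishing flux with $c_p\neq0$ would make the end planar, and planar embedded ends are excluded by Theorem \ref{obstructionplanar}; a non-embedded planar end has $c_p$ of the wrong form.

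The main obstacle is that the total curvature, hence $\deg g$, is not fixed. Therefore I cannot parametrize the data explicitly as in Theorem \ref{secondsurface}, and the argument must be purely local at the ends, plus global residue and symmetry identities. The delicate step is the precise computation of the leading coefficient $c_p$ of $Q$ at an end of arbitrary order $k$, and showing that its sign is determined by the flux and by the orientation reversal of $I$. I would first try the catenoidal case $k=1$, where $c_p$ is essentially a positive multiple of the vertical flux, and then check that higher-order terms do not alter the sign of the $z^{-2}$ coefficient.
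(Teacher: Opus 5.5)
There is a genuine gap: the mechanism you describe cannot produce a contradiction.

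\emph{The closed-line condition is misstated.} Both the horizontal and the vertical trajectories of the Hopf differential are lines of curvature, and Definition \ref{classC} asks for a closed line of \emph{either} principal foliation. So the condition at a lifted puncture is $c_p\in\bR\setminus\{0\}$, not $c_p<0$. Your compatibility relation also has the wrong sign. It should be $I^*\eta=-\overline{g^2\eta}$, since this is what makes $I^*\Phi=\overline{\Phi}$, i.e.\ $X\circ I=X$. This gives $I^*Q=-\overline{Q}$ (Proposition \ref{transformationHopfgeneralized}) and hence $c_{I(p)}=-\overline{c_p}$. By Corollary \ref{LinesCurvaturerelatedpoints}, closed lines of $\cF_X$ at $p$ correspond to closed lines of $f_X$ at $I(p)$, so the conditions at $p$ and $I(p)$ are equivalent, not incompatible. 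With your own relation $I^*Q=\overline{Q}$ there would be no sign flip at all.

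If your argument for the pair $0,\infty$ worked, it would also exclude the end $[0]$ of Theorem \ref{CarlosTSecond}. There $g(0)=0$, $g(\infty)=\infty$, and the curvature lines are closed. In fact no combination of vertical fluxes, balancing and Theorem \ref{obstructionplanar} can give the contradiction. In the reduced data below, the closed-line conditions hold at all four punctures for $b=\pm i$, with real, non-zero, balanced fluxes $\operatorname{Res}_0dh=\operatorname{Res}_\infty dh=\mp 1/a=-\operatorname{Res}_a dh=-\operatorname{Res}_{-1/a}dh$.

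Your local expansion is also off by one. A double pole of $Q$ at a zero of order $k$ of $g$ forces $\eta\sim z^{-k-1}dz$; your $z^{-k-2}$ gives a pole of order three. Hence $dh$ has a simple pole and $c_p=\pm k\operatorname{Res}_p dh$ up to your normalizing constant, so the step you call delicate is trivial.

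\emph{The missing idea is rigidity of the data combined with the horizontal period conditions.}
\begin{itemize}
\item Since $\operatorname{ord}_{p_j}Q_H=-2$, Theorem \ref{JorgeMeeksHopf} gives $\nu_j=r_{p_j}(g)+1$.
\item With $\nu_j=n_j-1$ (Corollary \ref{jorgemeeksmultiplicitiespunctureprojectivespaces}) and $r_{p_j}(g)=k_j-1$ at a vertical end, this yields $k_j=n_j-1$ (Proposition \ref{ramificationorderpunctureprojectiveclosedlines}), hence $m-k_0-k_1=1$.
\item So, contrary to your remark, the data are explicit even though $m$ is free. With $a\in\bR^*$ after normalization, $g=z^{n_0-1}\bigl(\frac{az+1}{z-a}\bigr)^{n_1-1}\frac{\bar bz+1}{z-b}$ and $\eta=i\frac{(z-a)^{n_1-2}(z-b)^2}{z^{n_0}(az+1)^{n_1}}dz$, with a single free branch value $b$.
\item Reality of the residues of $dh=i\frac{(z-b)(\bar bz+1)}{z(z-a)(az+1)}dz$ at $0$ and $a$ forces $b=\pm i$. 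Here this is the same as the closed-line condition.
\item Then $\alpha(a)=\operatorname{Res}_a\eta=0$, so the condition $\alpha(a)+\overline{\beta(a)}=0$ of Lemma \ref{simplifiedresidueproblem} forces $\beta(a)=\operatorname{Res}_a g\,dh=0$.
\item But the real part of this residue is $\pm\frac{2}{(n_1-1)!}$ times the $(n_1-1)$-st derivative of $z^{n_0-1}(az+1)^{n_1-2}$ at $z=a$. This is a non-empty sum of positive multiples of even powers of $a$, even because $n_0+n_1=m+1$ with $m$ odd, so it is non-zero.
\end{itemize}
This is how the paper proves Theorem \ref{Nonexistencetwopuncturesclosed}. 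Your plan lists the period conditions but never actually uses the horizontal ones, which is exactly where the obstruction lives.
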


By the non-existence of free boundary minimal Möbius bands in $\bB^3$ \cite{Carlos}, the simplest model of a non-orientable minimal surface in $\bR^3$ with closed curvature lines around their ends is the minimal immersion of a twice-punctured real projective space $\mathbb{RP}^2$, or more generally, $n$-punctured real projective spaces.

We found in the literature a family of minimal projective spaces in $\bR^3$ with an even number $N\geq 4$ of ends, due to S. Kato and K. Hamada \cite{Kato}, which are the non-orientable analogs of the Jorge-Meeks nodoids. For appropriate parameters, some members of this family are surfaces with closed curvature lines around their ends. We think that these surfaces are promising prototypes of non-orientable free boundary minimal surfaces in the unit ball $\bB^3$.

This article is organized as follows. In Section \ref{Section 4.2}, we review some preliminary results from the theory of Riemann surfaces.

In Section \ref{Section 4.3}, we focus on complete orientable minimal surfaces in $\bR^3$ of finite total curvature. We study the qualitative behavior of the maximal and minimal principal foliation around the ends and umbilical points of such immersions, and introduce the class $\cC$. We prove a useful characterization result and a topological restriction for the elements of $\cC$, see Corollary \ref{CriterioclassC} and Corollary \ref{ClosedCurvatureLineFormula}. We also prove Theorem \ref{obstructionplanar}, see Theorem \ref{PlanarEndsDoNotAdmitClosedLines}.

In Section \ref{Section 4.4}, we make a survey of the theory of complete non-orientable minimal surfaces in $\bR^3$ and the main constructions found in the literature. We prove in Corollary \ref{closedlineformulanonoriented} a generalization of the topological restriction for the existence of a non-orientable minimal surface in the class $\cC$. Then we found the transformational law of the Hopf differential associated to a complete non-orientable minimal surface in $\bR^3$ (see Proposition \ref{transformationHopfgeneralized}) and use it to prove Theorem \ref{nonexistenceoneendanygenus}, see Theorem \ref{Nonexistencetheorem}. Finally, we obtain some partial results for the possibility of finding an element in $\cC$ with the topology of a projective space with two or more ends in $\bR^3$. We construct the surface of Theorem \ref{secondsurface} (see Theorem \ref{CarlosTSecond}) and prove the non-existence Theorem \ref{theorem6}, see Theorem \ref{Nonexistencetwopuncturesclosed}.\\\\ \textbf{Acknowledgements.} I am deeply grateful to my PhD advisor Lucas Ambrozio for his constant encouragement. I thank professor R. Garcia for the discussions about the dynamics of the lines of curvature and professor M. Weber for his help using the Mathematica software. I thank my colleagues I. Miranda and C. Palacio for the inspiring conversations that motivated many ideas in this article. I am also grateful to professor Vanderson Lima for his improvements on preliminary versions of this work. Finally, I thank FAPERJ (Grant number E26/202.321/2024) for supporting this research project.
\section{Preliminaries}\label{Section 4.2}
We start by recalling known results from the theory of Riemann surfaces.
\begin{defn}\label{multiplicity}
\normalfont{
   Let $F:M\to N$ be a holomorphic map between two Riemann surfaces. Consider charts $(U,\psi)$ of $M$ and $(V,\phi)$ of $N$ such that $\psi(0)=P\in M$ and $\phi(0)=F(P)\in N$, which we call centered at $P$ and $F(P)$ respectively. Consider the holomorphic map
   \begin{equation*}
       \hat{F}\coloneqq \phi^{-1}\circ F\circ \psi:U\subset \bC\to V\subset \bC.
   \end{equation*} 
   Since $\hat{F}(0)=0$, there exists a natural number $n\in \bN$ and a holomorphic function $H(z)$ with $H(0)\neq 0$ such that
\begin{equation*}
   \hat{F}(z)=z^nH(z).
\end{equation*}
The integer $n$ does not depend on the coordinate charts and it is called \textit{the multiplicity} $mult_{P}(F)$ of the function $F$ at the point $P$. We define \textit{the ramification order} of $F$ at $P$ as $r_{P}(F)=mult_{P}(F)-1$. If $r_{P}(F)\geq 1$ then $P$ is called a \textit{branch point} of $F$, and its image $F(P)$ a \textit{ramification point}.
}
\end{defn}

Another useful definition is the following:
\begin{defn}
\normalfont{
    Let $F:\Sigma\to \bS^2\simeq \bC \cup \{\infty\}$ be a holomorphic function. The order of $F$ at point $q\in \Sigma$ is defined as
    \begin{gather*}
         ord_q(F)\coloneqq  \begin{cases}
        \hspace{0.25cm} mult_q(F)\quad \text{if $q$ is a zero of $F$},\\  -mult_q(F)\hspace{0.35cm} \text{if $q$ is a pole of $F$},\\ \hspace{0.8cm}0\hspace{1.25cm} \text{otherwise}.
        \end{cases}
    \end{gather*}
    }
\end{defn}
\begin{lema}\label{function in terms of order}
    Let $F:\Sigma\to \bS^2$ be a holomorphic function. Let $\psi:U\subset \bC\to \Sigma$ be any chart around $q\in \Sigma$. Then there exists a holomorphic function $H$ with $H(0)\neq 0$ such that
    \begin{equation*}
        F(\psi(z))=z^{ord_q(F)}H(z).
    \end{equation*}
\end{lema}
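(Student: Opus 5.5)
The plan is to reduce to the local normal form of Definition \ref{multiplicity} by choosing a chart on the target adapted to the value $F(q)$, and then to transport the factorization back to the given chart $\psi$. We may assume $\psi(0)=q$; this is implicit in the statement, since $z^{ord_q(F)}$ is evaluated at $z=0$. The argument splits into three cases according to whether $q$ is a zero, a pole, or neither.

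\emph{Zero of $F$.} If $F(q)=0$, take on $\bS^2\simeq\bC\cup\{\infty\}$ the standard chart $\phi(w)=w$ near $0$. Then $\phi^{-1}\circ F\circ\psi = F\circ\psi$. Definition \ref{multiplicity} gives $F(\psi(z))=z^{n}H(z)$ with $H(0)\neq 0$ and $n=mult_q(F)=ord_q(F)$. Since the multiplicity is chart-independent, it does not matter that $\psi$ is an arbitrary chart centered at $q$.

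\emph{Pole of $F$.} If $F(q)=\infty$, use the chart centered at $\infty$, namely $\phi(w)=1/w$ with $\phi(0)=\infty$, so that $\phi^{-1}(\zeta)=1/\zeta$. Definition \ref{multiplicity} gives $1/F(\psi(z))=z^{n}G(z)$ with $G(0)\neq0$ and $n=mult_q(F)$. Shrinking the chart so that $G$ has no zeros, we set $H=1/G$, which is holomorphic with $H(0)\neq 0$. This yields $F(\psi(z))=z^{-n}H(z)=z^{ord_q(F)}H(z)$.

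\emph{Neither.} Otherwise $F(q)=c\in\bC\setminus\{0\}$ and $ord_q(F)=0$. Take $H=F\circ\psi$, which is holomorphic near $0$ after shrinking $U$ to avoid poles, and satisfies $H(0)=c\neq0$.

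The only real subtlety is in the pole case: one must check that "holomorphic" for $H$ is meant on a small enough neighbourhood of $0$. This is handled by shrinking $U$ so that $G$ does not vanish there, which is possible by continuity. The identity theorem ensures $G\not\equiv 0$ as long as $F$ is non-constant; the degenerate constant cases $F\equiv 0$ and $F\equiv\infty$ are excluded implicitly by the definition of order.
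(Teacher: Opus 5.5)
Your proof is correct. The paper states this lemma without proof, as standard background from Riemann surface theory. Your case analysis is exactly the standard argument one would supply: you read off the factorization from Definition \ref{multiplicity} using the target chart $w$ at $0$ and the chart $1/w$ at $\infty$, and you take $H=F\circ\psi$ when $F(q)\in\bC\setminus\{0\}$.

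Two small precisions. First, the identity theorem (with $\Sigma$ connected and $F$ non-constant) is what guarantees that $F\circ\psi$, respectively $1/(F\circ\psi)$, is not identically zero near $0$, so that a finite exponent $n$ exists at all. The property $G\not\equiv 0$ that you attribute to it is automatic from $G(0)\neq 0$. Second, $H$ is only holomorphic on some neighbourhood of $0$, since $F\circ\psi$ may have other poles in $U$. The lemma should therefore be read locally, as you implicitly do.
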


\begin{lema}\label{BranchPointCriteria}
    Let $F:M\to N$ be a holomorphic map. Then $P\in M$ is a branch point of $F$ if and only if $dF(P)=0$. Moreover 
    \begin{equation*}
        r_P(F)=mult_{P}(dF).
    \end{equation*}
\end{lema}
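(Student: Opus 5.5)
Everything here is local, so I will work in charts centered at $P$ and $F(P)$ and reduce the claim to the local normal form of Definition \ref{multiplicity}. Choose a chart $(U,\psi)$ with $\psi(0)=P$ and a chart $(V,\phi)$ with $\phi(0)=F(P)$. Set $\hat{F}=\phi^{-1}\circ F\circ \psi$. By the definition of multiplicity we can write $\hat{F}(z)=z^nH(z)$ with $H(0)\neq 0$ and $n=mult_P(F)$.

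In these charts the differential $dF(P)$ is represented by the derivative $\hat{F}'(0)$, because the chart maps are biholomorphisms with nonvanishing derivatives. So the vanishing of $dF(P)$ is a chart-independent condition, equivalent to $\hat{F}'(0)=0$. Differentiating the normal form gives
\begin{equation*}
\hat{F}'(z)=z^{n-1}\bigl(nH(z)+zH'(z)\bigr).
\end{equation*}
Put $G(z)=nH(z)+zH'(z)$. Then $G(0)=nH(0)\neq 0$, since $n\geq 1$ and $H(0)\neq 0$. Hence $\hat{F}'(0)=0$ if and only if $n-1\geq 1$, that is, $r_P(F)\geq 1$. This is exactly the condition that $P$ is a branch point, which proves the equivalence.

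For the formula, interpret $mult_P(dF)$ as the order of vanishing at $P$ of the local representative $\hat{F}'\,dz$ of $dF$. When $dF(P)\neq 0$ this order is $0$, matching $r_P(F)=0$. The factorization $\hat{F}'(z)=z^{n-1}G(z)$ with $G(0)\neq 0$ shows this order is $n-1=r_P(F)$. It remains to check that the order does not depend on the charts. Changing charts multiplies $\hat{F}'$ by two factors: the derivative of the transition map on the source side, and the derivative of the transition map on the target side evaluated at $\hat{F}$. Both are nonvanishing holomorphic functions, so the exponent of $z$ is unchanged.

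The only delicate point is making precise what $mult_P(dF)$ means and checking this chart independence. The rest is the one-line differentiation above.
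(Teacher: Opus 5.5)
Your proof is correct. The paper gives no proof of this lemma; it is recalled as a standard fact in the preliminaries. Your argument is the standard one: differentiate the normal form to get $\hat F'(z)=z^{n-1}\bigl(nH(z)+zH'(z)\bigr)$, and note that the bracket is $nH(0)\neq 0$ at $z=0$.

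One small imprecision is in the chart-change step. Under a change of source chart, $\hat F'$ is not only multiplied by two nonvanishing factors. It is also precomposed with the transition map $\sigma$, and the target factor is evaluated at $\hat F(\sigma(w))$. This still preserves the vanishing order, because $\sigma(0)=0$ and $\sigma'(0)\neq 0$.

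In fact you can skip that separate check entirely. Your factorization $\hat F'=z^{n-1}G$ with $G(0)\neq 0$ holds in every pair of centered charts, with the same $n=mult_P(F)$. Since $n$ is already chart-independent by Definition \ref{multiplicity}, the vanishing order $n-1$ is automatically chart-independent too.
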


\begin{prop}\label{dichotomy}
   Let $F:\Sigma\to \bS^2$ be a holomorphic map. For any $q\in \Sigma$ we have the dichotomy
    \begin{enumerate}
        \item If $ord_q(F)\neq 0$ then the one-form $\frac{dF}{F}$ has a simple pole at $q$.
        \item If $ord_q(F)=0$ then the one-form $\frac{dF}{F}$ has a zero at $q$ of multiplicity equal to $r_q(F).$ 
    \end{enumerate}
\end{prop}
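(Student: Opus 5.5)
The plan is to work in a single chart $\psi:U\subset\bC\to\Sigma$ centered at $q$, so that $\psi(0)=q$. We write $F$ in the normal form of Lemma \ref{function in terms of order} and compute the logarithmic derivative directly. Lemma \ref{function in terms of order} gives $F(\psi(z))=z^{k}H(z)$ with $k=ord_q(F)$ and $H$ holomorphic, $H(0)\neq 0$. When $k<0$, so that $q$ is a pole, we use the coordinate $1/w$ on $\bS^2$, or equivalently read the identity as meromorphic; the formula is the same. Differentiating gives
\begin{equation*}
\frac{dF}{F}=\left(\frac{k}{z}+\frac{H'(z)}{H(z)}\right)dz,
\end{equation*}
and $H'/H$ is holomorphic near $0$ because $H(0)\neq0$.

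\textbf{Case (1).} If $k\neq 0$, the coefficient of $dz$ has the form $k/z+(\text{holomorphic})$. Since the residue $k$ is nonzero, there is a simple pole at $q$. This settles the first alternative.

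\textbf{Case (2).} If $k=0$, then $F(q)=H(0)$ is a finite nonzero value, and $\frac{dF}{F}=\frac{H'}{H}\,dz$ is holomorphic near $q$. Its vanishing order equals that of $H'$, because $1/H$ is a unit near $0$. So the order of $\frac{dF}{F}$ at $q$ is $mult_q(dF)$. To identify this with the ramification order, we apply Lemma \ref{BranchPointCriteria}, which gives $r_q(F)=mult_q(dF)$. The only point to check is that $dF$ computed in the chart of $\bS^2$ centered at $F(q)$ agrees up to a unit with $H'(z)dz$; this holds because the translation $w\mapsto w-F(q)$ has unit derivative. The convention is that multiplicity $0$ means "no zero", which matches the case $r_q(F)=0$.

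I do not expect a real obstacle. The only care needed is at points where $F(q)=\infty$ and with the chart conventions; there one checks that $\frac{dF}{F}=-\frac{d(1/F)}{1/F}$, which reduces that case to a zero of $1/F$.
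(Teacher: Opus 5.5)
Your argument is correct. Writing $F\circ\psi=z^{k}H$ with $H(0)\neq0$ gives $\frac{dF}{F}=\bigl(\frac{k}{z}+\frac{H'}{H}\bigr)dz$, which settles case (1) at once. In case (2), the translated chart $w\mapsto w-F(q)$ shows that the vanishing order of $H'$ is exactly $mult_q(dF)=r_q(F)$.

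The paper states this proposition among its preliminaries without proof, so there is no competing argument to compare; yours is the standard one. Its only tacit assumption, shared with the paper's Definition \ref{multiplicity}, is that $F$ is nonconstant. This is needed so that $mult_q(F)$ is finite and $\frac{dF}{F}$ is a well-defined meromorphic one-form.
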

\begin{lema}\label{Chartrelatedpoint}
    Suppose that $\Sigma$ is a Riemann surface with an antiholomorphic involution $\tau:\Sigma\to \Sigma$. Consider the conjugation map, $T:\bS^2\to \bS^2$ $T(z)=\overline{z}$.
   If $(D_{\epsilon},\phi)$ is a complex chart centered at $p\in \Sigma$, then the chart $(D_{\epsilon},\hat{\phi})$, given by
    \begin{equation*}
       \hat{\phi}\coloneqq \tau\circ \phi\circ T,
    \end{equation*}
is centered at $\tau(p)$ and it is compatible with the complex structure of $\Sigma$.
\end{lema}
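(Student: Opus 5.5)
We need to show that $\hat{\phi}=\tau\circ\phi\circ T$ is a chart centered at $\tau(p)$, and that it is holomorphically compatible with the atlas of $\Sigma$. The plan has three steps.

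First, we check that $\hat{\phi}$ is a chart. We take $D_{\epsilon}$ to be a disc centered at $0$, so that $T(D_{\epsilon})=D_{\epsilon}$. Then $\hat{\phi}$ is well defined on $D_{\epsilon}$. It is a homeomorphism onto the open set $\tau(\phi(D_\epsilon))$, since it is a composition of homeomorphisms: $\tau$ is an involution, hence bijective with continuous inverse $\tau$ itself. Moreover $\hat{\phi}(0)=\tau(\phi(\overline{0}))=\tau(\phi(0))=\tau(p)$, so the chart is centered at $\tau(p)$.

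Second, and this is the key step, we prove compatibility. Let $(V,\psi)$ be any holomorphic chart of $\Sigma$ whose image meets $\hat{\phi}(D_\epsilon)$. On the overlap, the transition map is
\begin{equation*}
\psi^{-1}\circ\hat{\phi}=\psi^{-1}\circ\tau\circ\phi\circ T .
\end{equation*}
Since $\tau$ is antiholomorphic, its local expression $\psi^{-1}\circ\tau\circ\phi$ is an antiholomorphic function $g(z)$, that is, $\overline{g}$ is holomorphic, equivalently $\partial g/\partial z=0$. Composing with $T$ on the right gives $g(\overline{z})$, whose $\partial/\partial\overline{z}$ derivative is $(\partial g/\partial \overline{w})(\overline z)\cdot 1$ by the chain rule. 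I would rather phrase this concretely: write $g(w)=\overline{h(w)}$ with $h$ holomorphic, expand $h(w)=\sum a_k (w-w_0)^k$, and observe that
\begin{equation*}
g(\overline{z})=\overline{h(\overline{z})}=\sum \overline{a_k}\,(z-\overline{w_0})^k ,
\end{equation*}
which is a convergent power series in $z$, hence holomorphic. The inverse transition $\hat{\phi}^{-1}\circ\psi=T\circ\phi^{-1}\circ\tau\circ\psi$ is handled the same way, using $\tau^{-1}=\tau$ and $T^{-1}=T$: conjugating an antiholomorphic map on the left also yields a holomorphic map.

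Third, the only remaining subtlety is the domain bookkeeping on overlaps, which I expect to be routine. One also needs $\hat{\phi}$ to be compatible with the chart $\phi$ itself, but that is included in the general argument, since $\phi$ is one of the charts $\psi$. Throughout, "antiholomorphic" is used in local coordinates to mean that the conjugate of the local expression is holomorphic.
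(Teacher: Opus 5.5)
Your proof is correct. The paper states this lemma without proof, as a standard fact, and your verification is exactly the routine check it relies on: $\hat{\phi}$ is a homeomorphism onto the open set $\tau(\phi(D_\epsilon))$ with $\hat{\phi}(0)=\tau(p)$, and every transition map has the form $z\mapsto\overline{h(\overline{z})}$ with $h$ holomorphic.

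There is one small slip in the chain-rule aside that you then set aside. For $G(z)=g(\overline{z})$ one has $\partial G/\partial\overline{z}=(\partial g/\partial w)(\overline{z})$, which vanishes because $g$ is antiholomorphic; it is not $(\partial g/\partial\overline{w})(\overline{z})$ as you wrote. The power-series argument you actually rely on is fine.
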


\begin{lema}\label{Residuerelatedpoints}
    Let $M$ be a Riemann surface and $\tau$ an antiholomorphic involution on $M$. Then for any meromorphic one-form $\omega$ in $M$ and any point $P\in M$,
    \begin{equation*}
        \eval{Res}_{P}\overline{\tau^{*}\omega}=\overline{\eval{Res}_{\tau(P)}\omega}.
    \end{equation*}
\end{lema}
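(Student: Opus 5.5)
The plan is to compute the residue directly from the Laurent expansion, using the chart of Lemma \ref{Chartrelatedpoint} to transport local information at $\tau(P)$ to $P$. We fix a chart $(D_\epsilon,\phi)$ centered at $\tau(P)$ with coordinate $w$. By Lemma \ref{Chartrelatedpoint}, $\hat\phi=\tau\circ\phi\circ T$ is then a holomorphic chart centered at $\tau(\tau(P))=P$, since $\tau$ is an involution. We use it as the coordinate $z$ near $P$.

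In the chart $\phi$ we write $\phi^*\omega=f(w)\,dw$, with Laurent expansion $f(w)=\sum_{n\ge -k}a_nw^n$. Here $a_{-1}=\eval{Res}_{\tau(P)}\omega$, since the residue is chart independent.

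Next we compute the pullback $\hat\phi^*(\tau^*\omega)=(\tau\circ\hat\phi)^*\omega$. Because $\tau\circ\tau=\mathrm{id}$, we have $\tau\circ\hat\phi=\phi\circ T$. Hence
\begin{equation*}
\hat\phi^*(\tau^*\omega)=T^*(f(w)\,dw)=f(\bar z)\,d\bar z=\sum_{n\ge -k}a_n\bar z^{\,n}\,d\bar z .
\end{equation*}
This is an antiholomorphic one-form, so its complex conjugate is meromorphic:
\begin{equation*}
\hat\phi^*\bigl(\overline{\tau^*\omega}\bigr)=\sum_{n\ge -k}\overline{a_n}\,z^n\,dz .
\end{equation*}
Reading off the coefficient of $z^{-1}dz$ in the chart $\hat\phi$ centered at $P$ gives $\eval{Res}_P\overline{\tau^*\omega}=\overline{a_{-1}}=\overline{\eval{Res}_{\tau(P)}\omega}$.

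The only delicate point is making sense of $\overline{\tau^*\omega}$ as a meromorphic one-form and of the chart-independence of residues in that setting. Both follow from the computation above: it exhibits $\overline{\tau^*\omega}$ locally as a meromorphic form in a holomorphic chart, and chart-independence is then the standard fact about ordinary residues. As an alternative check, one can write the residue as $\frac{1}{2\pi i}\oint$ over a small loop around $P$, apply $\tau$, and account for the orientation reversal of $\tau$ together with the sign of $\overline{i}$.
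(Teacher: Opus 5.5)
Your proof is correct: the identity $\tau\circ\hat\phi=\phi\circ T$ gives $\hat\phi^{*}(\overline{\tau^{*}\omega})=\overline{f(\bar z)}\,dz=\sum_n\overline{a_n}\,z^n\,dz$, and the residue $\overline{a_{-1}}$ can be read off directly in a chart centered at $P$. The paper states this lemma without proof, but your argument is the same chart-transport computation via Lemma \ref{Chartrelatedpoint} that the paper carries out in Proposition \ref{BranchPointsComeInPairs} and in Corollary \ref{LinesCurvaturerelatedpoints}, where it derives $\phi_\beta(z)=-\overline{\phi_\alpha(\bar z)}$, so it is the natural proof within the paper's framework.
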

\section{Orientable minimal surfaces with finite total curvature}\label{Section 4.3}

The classical global version of the Weierstrass representation theorem for orientable minimal surfaces is: 
\begin{thm}[{Weierstrass representation \cite{MartinThesis}}]\label{globalWeierstrassOriented}
    Let $M$ be a Riemann surface, and let $\eta$ and $g$ be a $1$-holomorphic form and a meromorphic function on $M$, respectively. Define
    \begin{gather}\label{PhiWeierstrass}
        \phi_1=\frac{1}{2}(1-g^2)\eta,\quad \phi_2=\frac{i}{2}(1+g^2)\eta,\quad \phi_3=g\eta.
    \end{gather}
   If $\Phi=(\phi_1,\phi_2,\phi_3)$ has no real periods on $M$, that is,
    \begin{equation*}
        Re\left(\int_{\gamma}\Phi\right)=(0,0,0) \quad \forall [\gamma]\in H_1(M,\bZ),
    \end{equation*}
    and if moreover
    \begin{equation}\label{immersion condition}
    \sum_{j=1}^3|\phi_j|^2\neq 0,    
    \end{equation}
    then the map $X:M\to\bR^3$
    \begin{equation*}
    X(P)=Re\left(\int_{P_0}^P\Phi\right)
    \end{equation*}
    is a conformal minimal immersion.
\end{thm}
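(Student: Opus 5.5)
The statement to prove is the global Weierstrass representation (Theorem \ref{globalWeierstrassOriented}). The plan is to check, in order, that $X$ is well defined, harmonic, conformal, and immersive.

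\emph{Well-definedness.} Fix $P_0\in M$. Two paths from $P_0$ to $P$ differ by a closed curve $\gamma$. The hypothesis that $Re\int_\gamma\Phi=0$ for every $[\gamma]\in H_1(M,\bZ)$ then shows that $Re\int_{P_0}^P\Phi$ does not depend on the path. Locally, in a chart $z=u+iv$, write $\phi_j=f_j(z)\,dz$ with $f_j$ holomorphic. We must check holomorphy at the poles of $g$: the immersion condition (\ref{immersion condition}) says $\sum|\phi_j|^2\neq 0$, so I read the hypothesis as saying the $\phi_j$ are holomorphic. This requires $\eta$ to vanish to order $2k$ wherever $g$ has a pole of order $k$. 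I would state this explicitly, since otherwise $\Phi$ could have poles. Then $X_j=Re\,F_j$ with $F_j'=f_j$. Hence each $X_j$ is harmonic, and $\partial X_j/\partial z=\tfrac12 f_j$.

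\emph{Conformality.} This is the key algebraic identity. Put $\eta=h\,dz$. Then
\begin{equation*}
f_1^2+f_2^2+f_3^2=\tfrac14(1-g^2)^2h^2-\tfrac14(1+g^2)^2h^2+g^2h^2=0.
\end{equation*}
Since $\partial X/\partial z=\tfrac12(X_u-iX_v)$, we get $\sum_j(\partial X_j/\partial z)^2=\tfrac14\big(|X_u|^2-|X_v|^2-2i\,X_u\cdot X_v\big)$. Its vanishing gives $|X_u|=|X_v|$ and $X_u\perp X_v$, so $X$ is conformal.

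\emph{Immersion.} Next, $|X_u|^2=|X_u|^2+|X_v|^2$ up to a factor, that is $|X_u|^2=\tfrac12\sum_j|f_j|^2$. So the immersion condition (\ref{immersion condition}) gives $X_u,X_v$ linearly independent, and $X$ is an immersion.

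\emph{Minimality.} A conformal immersion with harmonic coordinates satisfies $\Delta X=2H\lambda^2 N$, so $H=0$ and $X$ is minimal.

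The only delicate step is the regularity of $\Phi$ at the poles of $g$. Everything else is routine, including the chart-independence of the construction, which holds because the $\phi_j$ are $1$-forms.
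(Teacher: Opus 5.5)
Your argument is correct and is the standard proof: path-independence comes from the period condition, harmonicity from $X_j=\mathrm{Re}\,F_j$, conformality from $\sum_j f_j^2=0$, and the immersion property from $\sum_j|f_j|^2\neq 0$. The paper gives no proof of its own; it only cites Martin's thesis. Its Remark \ref{equivalent immersion condition}, with the identity \eqref{conformal factor immersion}, supports your reading that $\eta$ must vanish to order exactly $2k$ at each pole of order $k$ of $g$, and nowhere else.

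One small slip: the sentence ``$|X_u|^2=|X_u|^2+|X_v|^2$ up to a factor'' should instead say $\sum_j|\partial X_j/\partial z|^2=\tfrac14(|X_u|^2+|X_v|^2)$. This gives $|X_u|^2+|X_v|^2=\sum_j|f_j|^2$, and conformality then yields $|X_u|^2=\tfrac12\sum_j|f_j|^2$.
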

\begin{rem}\label{equivalent immersion condition}
\normalfont{
    The condition $\sum_{j=1}^3|\phi_j|^2\neq 0$ is equivalent to the requirement that the zeros of the one-form $\eta$ are exactly over the poles of $g$ with double order, indeed it holds that
   \begin{equation}\label{conformal factor immersion}
\sum_{j=1}^3|\phi_j|^2=\frac{1}{2}(1+|g|^2)^2|f|^2.
    \end{equation} 
    }
\end{rem}

The behavior of embedded ends is well understood by the following result: 
\begin{thm}[{\cite{HildebrandtI}}]\label{embeddedends}
    Let $X:\Sigma\setminus \{p_1,\ldots p_k\}\to \bR^3$ be a complete minimal surface of finite total curvature and without branch points. Let $E_j$ be an embedded end corresponding to the puncture $p_j$ and suppose that $N(p_j)=(0,0,1)$. Then, outside of a compact set, the end $E_j$ has the asymptotic behavior
    \begin{equation}\label{asymptoticend}
        z(x,y)=\alpha \log r+\beta+r^{-2}(\gamma_1x+\gamma_2y)+O(r^{-2}),\quad  r=\sqrt{x^2+y^2}\rightarrow \infty.
    \end{equation}
    We call the embedded end $E_j$  planar if $\alpha=0$ whereas, for $\alpha\neq 0$, we speak of a catenoidal end.
\end{thm}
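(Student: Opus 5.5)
Although the statement is attributed to \cite{HildebrandtI}, I would prove it from the Weierstrass representation (Theorem \ref{globalWeierstrassOriented}) near the puncture. The plan has three steps: put the end in a normal form, integrate the height coordinate $x_3$ and the horizontal coordinates separately, and invert the horizontal map to write the end as a graph.

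\textbf{Normal form.} By finite total curvature (Huber and Osserman), $g$ and $\eta$ extend meromorphically to $p_j$. Choose a local coordinate $z$ centered at $p_j$ on a punctured disc $D^*_\epsilon$. Since $N(p_j)=(0,0,1)$, after possibly inverting the stereographic convention we may take $g(0)=\infty$. Embeddedness of the end is equivalent to the end having multiplicity one, which in Weierstrass data means $g$ has a simple pole: $g(z)=a/z+O(1)$ with $a\neq0$. Completeness together with the immersion condition (Remark \ref{equivalent immersion condition}) and multiplicity one force $\eta$ to have a pole of order exactly $4$ at $0$. Hence $\phi_1,\phi_2$ have poles of order $2$ and $\phi_3=g\eta$ has a pole of order $3$.

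\textbf{Integration.} The period condition forces the residues of $\phi_1,\phi_2$ to be real multiples suitable for single-valuedness; in fact $\mathrm{Res}\,\phi_1,\mathrm{Res}\,\phi_2$ must be real. Since $\mathrm{Res}\,\phi_3$ must also be real, we get $x_3=\alpha'\log|z|+\mathrm{Re}(c/z^2+d/z)+\beta+O(|z|)$ after integration. The vanishing of the $z^{-2}$ term is the point I would have to argue rather than assume. The idea is to write the horizontal part as $x_1+ix_2=\overline{A/z}+\kappa\log|z|+\dots$, invert it so that $1/z\approx \bar w/\bar A$ with $w=x_1+ix_2$ and $r=|w|$, and then expand $z^{-2}$ terms in the normal coordinate. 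The key remark is that one can normalize the coordinate $z$ so that $\phi_3$ has no $z^{-3}$ term beyond what is compatible with $\partial x_3$ being orthogonal to the horizontal map. Concretely, the relation $\phi_1^2+\phi_2^2+\phi_3^2=0$ gives $\phi_3^2=-(\phi_1+i\phi_2)(\phi_1-i\phi_2)$. Here $\phi_1-i\phi_2=\eta$ has a pole of order $4$ and $\phi_1+i\phi_2=-g^2\eta$ has order $2$ zero behavior relative to it. Carrying out this expansion shows that the leading $x_3$ terms are $\log$ plus a term of order $z^{-1}$ paired with $z$-linear corrections. The $z^{-2}$ term would come from the $z^{-3}$ coefficient of $\phi_3$, and this is matched by choosing the coordinate so that $g=a/z$ exactly, which is always possible since $g$ has a simple pole. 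With $g=a/z$, $\phi_3=g\eta$ and $\eta$ of order $-4$ with a free Laurent expansion. So the obstruction to the expansion is the $z^{-3}$ coefficient of $\phi_3$, which is the $z^{-2}$ coefficient of $\eta$ times $a$. That coefficient equals the $z^{-1}$ coefficient of $g^2\eta\cdot z/a$, i.e.\ it is tied to $\mathrm{Res}(\phi_1+i\phi_2)$ and thus to the residues of the horizontal components. Those residues must vanish because the horizontal coordinates would otherwise have a $\log$ term incompatible with single-valuedness. The real parts are handled by the period condition, and the imaginary parts force the flux to be vertical, using the balancing argument at an embedded end.

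\textbf{Inversion and the main obstacle.} Once the $z^{-1}$ residues of $\phi_1,\phi_2$ vanish and the coordinate is normalized, we get $w=\overline{A}/\bar z+O(|z|)$. Inverting gives $z=\overline{A}/\bar w+O(r^{-3})$. Substituting into $x_3$ gives $\alpha\log r+\beta+\mathrm{Re}(\bar d\,w)/r^2\cdot\text{const}+O(r^{-2})$, which is exactly \eqref{asymptoticend}. Planar versus catenoidal is then decided by $\alpha=-\mathrm{Res}\,\phi_3$ up to a constant. The main obstacle is precisely this bookkeeping: showing that no $r^{0}$-oscillating term of the form $\mathrm{Re}(c\,w^2)/r^2$ survives, which amounts to the residue/flux argument that the horizontal flux at an embedded end vanishes.
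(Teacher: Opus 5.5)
The paper does not prove this statement; it is quoted from \cite{HildebrandtI}, so there is no internal proof to compare against. Your overall route (Weierstrass data at the puncture, integrate $x_3$ and $x_1+ix_2$, invert the horizontal map) is the standard one. However, your normal form is wrong, and most of the proposal fights obstacles that this error creates.

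Embeddedness does not mean that $g$ has a simple pole at $p_j$. It means the Jorge--Meeks multiplicity is $1$, i.e.\ the largest pole order among $\phi_1,\phi_2,\phi_3$ is $2$. Let $g(p_j)=\infty$ with multiplicity $k\geq 1$ and $\ell=\mathrm{ord}_{p_j}\eta$. Then $\phi_1,\phi_2$ have order $\ell-2k$ and $\phi_3$ has order $\ell-k$, so embeddedness gives $\ell=2k-2\geq 0$. Consequently $\eta$ is holomorphic at $p_j$, $g^2\eta$ has a pole of order exactly $2$, and $\phi_3=g\eta$ has order $k-2\geq -1$. Ends with $k\geq 2$ are precisely the planar ones. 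If $k=1$, then $\eta(p_j)\neq 0$ and $\phi_3$ has a simple pole with nonzero residue, so your assumption ``$g=a/z$'' excludes the planar case that the statement explicitly contains. Your orders are also mutually inconsistent. Since $\eta=\phi_1-i\phi_2$, it cannot have a pole of order $4$ while $\phi_1,\phi_2$ have double poles; with $g=a/z$ they would have poles of order $6$. A triple pole of $\phi_3$ also contradicts multiplicity one. With the correct orders you get directly $x_3=\alpha'\log|z|+\beta+\mathrm{Re}(ez)+O(|z|^2)$, where $\alpha'=\mathrm{Res}_{p_j}\phi_3\in\bR$ by the period condition. There are no $z^{-1}$ or $z^{-2}$ terms to eliminate, so the ``main obstacle'' you identify does not exist.

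The step that genuinely needs an argument is the vanishing of the horizontal residues; a term $\kappa\log|z|$ in $x_1+ix_2$ would put $\log r$ factors into the lower-order terms. Here your justification fails on three counts. First, a real residue $\rho$ of $\phi_1$ contributes $\rho\log|z|$ to $x_1$, which is single-valued. Second, the period condition $\mathrm{Re}\oint\phi_k=0$ only gives $\mathrm{Res}_{p_j}\phi_k\in\bR$: it kills the imaginary parts, while the real parts are the flux, so you have these swapped. Third, the balancing formula constrains only the sum of the fluxes over all ends. The missing idea is algebraic. Since $\eta=\phi_1-i\phi_2$ is holomorphic at $p_j$, we have $\mathrm{Res}_{p_j}\phi_1=i\,\mathrm{Res}_{p_j}\phi_2$, and two real numbers related this way must both vanish. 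Then
\[
x_1+ix_2=\frac12\,\overline{\int\eta}-\frac12\int g^2\eta=\frac{A}{z}+c+O(|z|),\qquad A\neq 0,
\]
so in your convention $w$ behaves like $A/z$, not $\overline{A/z}$. Apply the inverse function theorem to $1/(w-c)$ as a function of $z$. This shows the end is a graph over the exterior of a disc (which you did not justify), with $z=\frac{A}{w-c}\bigl(1+O(r^{-2})\bigr)$. Substitute into $x_3$, using $\log|w-c|=\log r-\mathrm{Re}(c/w)+O(r^{-2})$ and $\mathrm{Re}(\lambda/w)=r^{-2}\,\mathrm{Re}(\lambda\bar w)$. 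This yields the stated expansion with $\alpha=-\alpha'$, and shows $\alpha\neq 0$ exactly when $k=1$.
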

\begin{rem}\label{catenoidalvsplanar}
\normalfont{
   The residue of the height differential $\phi_3$ of the Weierstrass data \eqref{PhiWeierstrass} determines whether the end is planar (zero residue)  or catenoidal (non-zero residue) \cite{CarlosThesis}.
   }
\end{rem}

\subsection{The dynamics of the lines of curvature}

The equatorial disk and the critical catenoid are solutions to Problem \ref{problemrestriction}, and we may wonder if they are the only free boundary minimal surfaces with this property.

Notice that if a complete minimal surface of finite total curvature in $\bR^3$, immersed or embedded, has a free boundary piece in some Euclidean ball, with the topology of a disk, then by Nitsche's uniqueness Theorem \cite{Nitsche1985}, it contains a totally geodesic slice, and therefore the whole complete minimal surface must be a plane.

If we restrict our attention to the case of one boundary component and non-zero genus, we may hope to find a solution to Problem \ref{problemrestriction} among the classical Chen-Gackstatter \cite{CG} surfaces with genus one and two of total curvatures $-8\pi$ and $-12\pi$ respectively, or among their generalizations to higher genus which were accomplished by the subsequent works of N. Do Espirito-Santo \cite{EspirituSanto}, C. Thayer \cite{Thyer}, K. Sato \cite{Sato}, M. Weber and M. Wolf \cite{WW}. However, we will prove that none of them restricts to a free boundary minimal surface in the ball, see Theorem \ref{Nonexistencetheorem}.    

In order to study Problem \ref{problemrestriction}, we apply Sotomayor and Gutierrez analysis of the asymptotic behavior of the Hopf differential around umbilical points and ends, in the particular case of a complete orientable minimal immersion $X:M\to \bR^3$ of finite total curvature. Let $(U_{\alpha},\psi_{\alpha})$ be an isothermal atlas of $M$ with $z_{\alpha}\in U_{\alpha}$, and consider the Hopf differential $Q_H$ on $M$ such that $\psi_{\alpha}^{*}Q_H=\phi_{\alpha}(z)dz_{\alpha}^2$. We refer to $\phi(z)$ as \textit{the associated complex function}, which depends upon the choice of chart.

By Osserman's Theorem $M$ is conformally diffeomorphic to a finitely punctured closed Riemann surface $M=\Sigma\setminus \{p_1,\ldots p_N\}$ and moreover the Weierstrass data $(g,\eta)$ of the immersion extends meromorphically to the whole $\Sigma$ \cite[Theorem 3.1]{OssermanAnnals}. This implies that the Hopf differential $Q_H$ extends also meromorphically to $\Sigma$ and we can prove that in fact it is free of poles in $M$:
\begin{prop}\label{Hopf free poles interior}
    The Hopf differential extends meromorphically to $\Sigma$, and it does not have any poles in $M=\Sigma\setminus \{p_1,\ldots, p_N\}.$
\end{prop}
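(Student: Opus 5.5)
I will express the Hopf differential explicitly in terms of the Weierstrass data. The plan is to read off meromorphy on $\Sigma$ from Osserman's theorem, and then check holomorphy on $M$ pointwise using the immersion condition of Remark \ref{equivalent immersion condition}.

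\textbf{Formula for the Hopf differential.} For a conformal minimal immersion with Weierstrass data $(g,\eta)$ as in Theorem \ref{globalWeierstrassOriented}, write $\eta=f\,dz$ in a local isothermal chart. The second fundamental form satisfies $\langle X_{zz},N\rangle = -\tfrac12 g' f$, up to a sign and constant depending on conventions. Hence
\begin{equation*}
Q_H = -\tfrac12\, dg\cdot \eta = -\tfrac12\, g'(z) f(z)\, dz^2 .
\end{equation*}
To obtain this formula, I would:
\begin{itemize}
\item write $X_z=\tfrac12(\phi_1,\phi_2,\phi_3)$;
\item express $N$ via stereographic projection of $g$;
\item differentiate once more and compute the inner product $\langle X_{zz},N\rangle$;
\item check that the expression transforms as a quadratic differential under a change of chart. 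This is automatic, since $dg$ and $\eta$ are both $1$-forms.
\end{itemize}

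\textbf{Meromorphic extension.} By Osserman's theorem \cite[Theorem 3.1]{OssermanAnnals}, $g$ and $\eta$ extend meromorphically to $\Sigma$. Then $dg$ is a meromorphic $1$-form on $\Sigma$, so the product $dg\cdot\eta$ is a meromorphic quadratic differential on $\Sigma$. This gives the extension of $Q_H$.

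\textbf{No poles in $M$.} The form $\eta$ is holomorphic on $M$, so a pole of $Q_H$ at $q\in M$ can only come from a pole of $g$. Suppose $g$ has a pole of order $k\ge 1$ at $q$. Then:
\begin{itemize}
\item $dg$ has a pole of order $k+1$ at $q$;
\item by Remark \ref{equivalent immersion condition}, $\eta$ has a zero of order exactly $2k$ at $q$.
\end{itemize}
So $\operatorname{ord}_q(Q_H)=2k-(k+1)=k-1\ge 0$, and $Q_H$ is holomorphic at $q$. At points where $g$ is finite, both factors are holomorphic. Hence $Q_H$ has no poles on $M$.

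The main obstacle is the first step: pinning down the identity $Q_H=-\tfrac12\,dg\,\eta$ with a consistent sign convention. The precise constant is irrelevant for pole orders, though. The only essential input is that $Q_H$ is a constant multiple of $dg\cdot\eta$, together with the exact order matching $2k$ from the immersion condition.
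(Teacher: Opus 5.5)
Your proposal is correct and follows essentially the route the paper indicates: Osserman's theorem gives the meromorphic extension of $(g,\eta)$, hence of $Q_H=-\frac{1}{2}dg\otimes\eta$. Your order count $2k-(k+1)=k-1\geq 0$ at poles of $g$ correctly supplies the absence of poles on $M$, which the paper only asserts. This last part is also immediate from the definition, since $Q_H=\langle X_{zz},N\rangle\,dz^2$ is a smooth tensor wherever $X$ is an immersion.
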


While the Hopf differential is free of poles on $M$, it may have zeros which characterize exactly the set of umbilical points $\cU_X$ of the immersion $X$. We have the following equivalence

\begin{prop}\label{Hopf zeros ramification Gauss}
   Let $X:M\to \bR^3$ be a complete orientable minimal immersion of finite total curvature with $M=\Sigma\setminus\{p_1,\ldots, p_N\}$ and $\Sigma$ closed. Then, the following assertions are equivalent:
   \begin{enumerate}
   \item $q\in M$ is an umbilical point.
       \item $q\in M$ is a branch point of $g$.
       \item $q\in M$ is a zero of the Hopf differential $Q_H=\phi(z)dz^2$.
   \end{enumerate}
   Moreover the ramification order of $g$ at $q$ coincides with the multiplicity of the zero of the Hopf differential at $q$.
\end{prop}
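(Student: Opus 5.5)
The plan is to compute the Hopf differential explicitly in terms of the Weierstrass data and read off its zeros locally. In a local coordinate $z$ on $M$ write $\eta = f(z)\,dz$. The standard computation gives, up to sign convention,
\begin{equation*}
Q_H=\phi(z)\,dz^2 = -\tfrac12\, g'(z) f(z)\, dz^2 = -\tfrac12\, dg\cdot\eta .
\end{equation*}
This comes from differentiating $X_z=\tfrac12\Phi$ and pairing with the unit normal $N$, which is the inverse stereographic projection of $g$. I will take this formula as the starting point, as in the proof of Proposition \ref{Hopf free poles interior}.

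Next I use Remark \ref{equivalent immersion condition}: on $M$, the zeros of $\eta$ occur exactly at the poles of $g$, with twice the order. I then split into two cases at a point $q\in M$.

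\emph{Case 1: $g(q)\neq\infty$.} Then $f(q)\neq 0$ and $f$ has no pole at $q$, so $\mathrm{mult}_q(Q_H)=\mathrm{mult}_q(g')$. By Lemma \ref{BranchPointCriteria} this equals $r_q(g)$.

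\emph{Case 2: $g$ has a pole of order $k$ at $q$.} Then $f$ has a zero of order exactly $2k$ at $q$. By Lemma \ref{function in terms of order}, $g'$ has a pole of order $k+1$. Hence
\begin{equation*}
\mathrm{mult}_q(Q_H)=2k-(k+1)=k-1=r_q(g),
\end{equation*}
since the multiplicity of $g$ at a pole of order $k$ is $k$. An alternative for this case is to replace $g$ by $1/g$, i.e. rotate $\bR^3$, which changes $Q_H$ only by a unimodular constant; this avoids the pole bookkeeping. In both cases $Q_H$ vanishes at $q$ if and only if $r_q(g)\geq 1$, which gives (2)$\Leftrightarrow$(3) together with the equality of orders.

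For (1)$\Leftrightarrow$(3), recall that the second fundamental form is $II=\mathrm{Re}(Q_H)$ plus a multiple of the metric times $H$. Since $H=0$, the traceless part is $2\,\mathrm{Re}(\phi\,dz^2)$. Its eigenvalues are $\pm\lambda^{-2}|\phi|$, where $\lambda^2$ is the conformal factor. So $q$ is umbilic, i.e. $\kappa_1=\kappa_2=0$, if and only if $\phi(q)=0$.

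The main obstacle is getting the Hopf differential formula with the right normalization and orientation; only the product structure $dg\cdot\eta$ matters for the zeros. The pole case is the other delicate point, and it is handled by the double-zero condition or by the rotation trick.
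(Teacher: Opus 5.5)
Your proof is correct and follows essentially the paper's route. The paper states this proposition without writing out a proof. Its surrounding arguments, however, rest on the same identity $Q_H=-\tfrac12\,dg\otimes\eta=-\tfrac12\,\tfrac{dg}{g}\otimes dh$, with local order counting and the immersion condition of Remark \ref{equivalent immersion condition} (as in the proofs of Theorem \ref{JorgeMeeksHopf} and Proposition \ref{transformationHopfgeneralized}). Your split into $g(q)\neq\infty$ versus a pole of $g$ is equivalent to the paper's use of Proposition \ref{dichotomy}.

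The only slips are normalizations that do not affect the zero set or its orders. With $\phi=II(\partial_z,\partial_z)$ and $H=0$ one gets $II=2\,\mathrm{Re}(\phi\,dz^2)$, so the principal curvatures are $\pm2\lambda^{-2}|\phi|$ rather than $\pm\lambda^{-2}|\phi|$. Also, $Q_H$ is exactly invariant under rotations, not merely invariant up to a unimodular constant.
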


\begin{defn}\label{maximalfoliation}
\normalfont{
   \textit{The maximal principal foliation} $\cF_{X}$ and \textit{the minimal principal foliation} $f_{X}$ are the integral lines of the smooth line fields $\cL_{X}$ and $l_{X}$ which are generated by the maximal and minimal principal directions, respectively.
    }
\end{defn}

We introduce the following definition, which is our object of study:
\begin{defn}\label{classC}
    \normalfont{Let $X:\Sigma\setminus\{p_1,\ldots, p_N\}\to \bR^3$ be a complete minimal immersion of finite total curvature, where $\Sigma$ is a closed Riemann surface. We say that $X$ has \textit{closed curvature lines around its ends} if for each $1\leq j\leq N$ either the maximal $\cF_X$ or the minimal $f_X$ principal foliation has a closed curvature line in a sufficiently small neighborhood $V_j$ of $p_j.$ We denote by $\cC$ the class of complete minimal surfaces of finite total curvature in $\bR^3$ with closed curvature lines around its ends.}
\end{defn}
\begin{exam}[{Jorge-Meeks nodoids}]
\normalfont{
The Weierstrass representation of these surfaces is defined on the $n+1$-punctured sphere
\begin{equation*}
     M=\bS^2\setminus \{z^{n+1}=1\},\quad  g(z)=z^n,\quad f(z)dz=\frac{dz}{(z^{n+1}-1)^2}.
\end{equation*}
 These orientable surfaces have embedded ends that are of catenoidal type, and their curvature lines are closed around each end, hence they belong to the class $\cC$ in the orientable context. The dynamics of the curvature lines is represented in Figure \ref{Jorge Meeks curvature lines}. The free boundary minimal analogues in the unit ball $\bB^3$ were discovered by Fraser and Schoen \cite{FS16}, but we do not know if they are pieces of the Jorge-Meeks nodoids.
}
\end{exam}
\begin{rem}
\normalfont{
    The same definition works for a non-orientable minimal surface of finite total curvature $X':M'\to \bR^3$ by considering the induced immersion $X:M\to \bR^3$ associated to the orientable double cover $M$ of $M'$.}
\end{rem}

The qualitative behavior of the principal foliation around the ends of the immersion $X$ is described by the following results, due to Gutierrez and Sotomayor:
\begin{figure}
     \centering
     \begin{subfigure}[b]{0.25\textwidth}
         \centering
         \includegraphics[width=\textwidth]{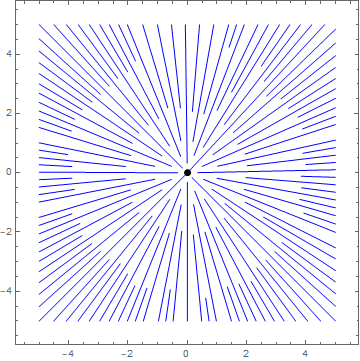}
         \caption{$a>0,\ b=0$.}
         \label{hopf n=-2a1b0}
     \end{subfigure}
     \hfill
     \begin{subfigure}[b]{0.25\textwidth}
         \centering
         \includegraphics[width=\textwidth]{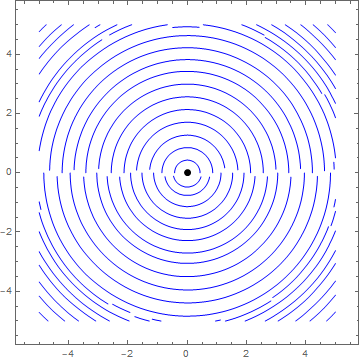}
         \caption{$a<0,\ b=0$.}
         \label{hopf n=-2a-1b0}
     \end{subfigure}
      \hfill
     \begin{subfigure}[b]{0.25\textwidth}
         \centering
         \includegraphics[width=\textwidth]{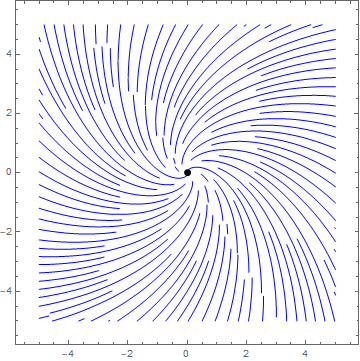}
         \caption{$b\neq 0$.}
         \label{hopf n=-2a1b2}
     \end{subfigure}
     \caption{Maximal principal foliation $F_X$ of Theorem \ref{dynamicslinescurvature} Item (b), with $n=2$.}
     \end{figure}

\begin{defn}\label{quadraticlimit}
    \normalfont{Let $X:\Sigma\setminus \{p_1,\ldots p_N\}\to \bR^3$ be a complete minimal immersion  of finite total curvature, such that its Hopf differential has a pole of order at most $2$ at each $p_j$. Let $\phi(z)$, $z=u+iv$, be the associated complex function to the Hopf differential with respect to isothermic coordinates of $\Sigma$ centered at the end $p_j$. We define the \textit{quadratic limit of the Hopf differential $Q_H$} at $p_j$ as the limit
    \begin{equation*}
        a+ib\coloneq \lim_{z\to0}z^2\phi(z)\in \bC.
    \end{equation*}}
\end{defn}
\begin{rem}
\normalfont{
    We point out that when the Hopf differential has a pole of order two at an end $p_j$, the quadratic limit of the Hopf differential does not depend on the choice of isothermic chart \cite{CarlosThesis}.}
\end{rem}

We propose the following slight modification of a result of Sotomayor and Gutierrez.
\begin{thm}[{\textit{Cf. }Corollary 4.7 \cite{SotoPrincipalLinesCMC}}]\label{dynamicslinescurvature}
   The lines of curvature of a complete minimal immersion $X:\Sigma\setminus \{p_1,\ldots, p_N\}\to \bR^3$ of finite total curvature around an end $p_j$ where the associated complex function $\phi(z)$ has a pole of order $n$ are described as follows:
    \begin{itemize}
        \item[(a)] For $n=1$, there is exactly one line $S$ (resp. $s$) of $\cF_{X}$ (resp. $f_{X}$) which tends to $p_j$.
        \item[(b)]For $n=2$. Suppose that $a+ib \coloneqq\lim_{z\to 0}z^2\phi(z)$. Then $a+ib\neq 0$ and there are two cases
        \begin{itemize}
            \item [(b.1)]$b=0$. On one hand, if $a>0$, the lines of $\cF_{X}$ are rays tending to $p_j$ and those of $f_{X}$ circles around $p_j$. On the other hand, if $a<0$, the lines of $\cF_{X}$ are circles around $p_j$ and those of $f_{X}$ are rays tending to $p_j$.
            \item[(b.2)]$b\neq 0$. The lines of $\cF_{X}$ and $f_{X}$ tend to $p_j$.
        \end{itemize}
        \item[(c)] For $n\geq 3$, every line of $\cF_{X}$ and $f_{X}$ tends to $p_j$.
    \end{itemize}
\end{thm}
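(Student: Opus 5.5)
The plan is to reduce everything to the local normal form of a quadratic differential near a pole and read the curvature lines off in that normal form. Fix an end $p_j$ and a chart $z$ centered at $p_j$, and write $Q_H=\phi(z)\,dz^2$ with $\phi$ having a pole of order $n$ at $z=0$ (this is legitimate by Proposition \ref{Hopf free poles interior}). Curvature lines are the integral curves of the line fields where $Q_H$ is real: the maximal foliation $\cF_X$ is where $\phi\,dz^2>0$ and the minimal foliation $f_X$ is where $\phi\,dz^2<0$, or the reverse, depending on sign conventions. So the statement becomes a question about the horizontal and vertical trajectories of a meromorphic quadratic differential near a pole of order $n$, and I would treat the three cases in turn.

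For $n=2$ I write $\phi(z)=(a+ib)z^{-2}(1+O(z))$. The first step is a holomorphic change of coordinates $w=w(z)$, $w(0)=0$, $w'(0)\neq 0$, with $Q_H=(a+ib)\,w^{-2}dw^2$ exactly. This is the standard normal form at a double pole: it exists because there is no residue obstruction for order two, since $\sqrt{\phi}$ has a simple pole whose coefficient $\sqrt{a+ib}$ is nonzero. Then $a+ib\neq 0$ is automatic, by the definition of pole order. Next I set $\zeta=\log w=\rho+i\theta$, so that $Q_H=(a+ib)\,d\zeta^2$, and the trajectories become straight lines in the $(\rho,\theta)$ half-cylinder with direction $\arg d\zeta\in\{-\tfrac12\arg(a+ib),\,-\tfrac12\arg(a+ib)+\tfrac\pi2\}$. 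The cases then fall out directly:
\begin{itemize}
\item If $b=0$ and $a>0$, the positive direction is $d\theta=0$, which gives rays tending to $p_j$, while the negative direction is $d\rho=0$, which gives circles.
\item If $a<0$, the roles of the two directions swap.
\item If $b\neq 0$, both directions are oblique lines in the cylinder, i.e. logarithmic spirals $|w|=Ce^{k\theta}$, so every line tends to $p_j$ and none is closed.
\end{itemize}
Finally I need the identification of which foliation is maximal, namely that the maximal principal directions are where $Re(Q_H)>0$. I would check this against the catenoid, where the meridians are rays, in order to fix conventions.

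For $n\neq 2$ I would use the normal form $\zeta=\int\sqrt{\phi}\,dz$. When $n\geq 3$, this gives $\zeta\sim c\,z^{-(n-2)/2}$ (plus a logarithmic term when $n$ is even), so a neighborhood of $p_j$ maps to $n-2$ half-planes glued cyclically. Horizontal and vertical lines in these half-planes are lines going to infinity in $\zeta$, which means they tend to $p_j$; that proves (c). When $n=1$, $\zeta\sim c\,z^{1/2}$, so $p_j$ behaves like a simple zero-type singular point of index $1/2$ (a ``thorn''). The standard analysis gives exactly one separatrix of each foliation ending at $p_j$, with the nearby lines turning around it, which proves (a).

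I expect the main obstacle to be rigor in the higher-order cases. The residue term in $\sqrt{\phi}$ complicates the normal form for even $n$, and for $n=1$ one must argue that exactly one trajectory of each family lands, which requires a local index or sector argument. Since the statement is a slight modification of Gutierrez--Sotomayor's Corollary 4.7, for those cases I would invoke their analysis of quadratic differentials directly, and only carry out the computation above explicitly in case (b), where the distinction between $a>0$ and $a<0$ is the new content.
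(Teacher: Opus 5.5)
Your proposal is correct and is essentially the paper's argument: the paper also defers (a) and (c) to Gutierrez--Sotomayor and, for $n=2$, takes their normal form $\phi(z)=(a+ib)/z^2$ in an isothermic chart, then integrates the principal line fields $V_{\pm}=2\overline{\phi}\pm 2\abs{\phi}$ in polar coordinates (getting $\frac{1}{r}\frac{dr}{d\theta}$ constant when $b\neq 0$), which is the same computation as your straight trajectories of $(a+ib)\,d\zeta^2$ with $\zeta=\log w$. Two small remarks: the maximal/minimal identification needs no catenoid check, since for a minimal surface $II(v,v)=2\,\mathrm{Re}\left(\phi(z)v^2\right)$ for a tangent vector $v$ written as a complex number in the chart, so $\cF_X$ is exactly where $\phi\,dz^2>0$; and in (a) a simple pole gives the index $+\tfrac12$ ``thorn'' singularity, not a ``simple zero-type'' one (a simple zero has index $-\tfrac12$).
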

\begin{proof}
    The satements proofs of Items (a) and (c) are not modified, while case (b) contains a slight improvement with respect to \cite{SotoPrincipalLinesCMC}. We then provide a proof of the case $n=2$. By Proposition \cite[Proposition 4.5]{SotoPrincipalLinesCMC} there exists an isothermic chart $z=re^{i\theta}$ around the end $p_j$ such that
    \begin{equation*}
        \phi(z)=\frac{a+ib}{z^2}=\frac{a+ib}{r^2}e^{-2i\theta}.
    \end{equation*}
    The maximal and minimal principal directions are generated by the vector fields $V_{+},\ V_{-}$ respectively, where
    \begin{gather*}
        V_{\pm}=2\overline{\phi}\pm 2\abs{\phi}=\frac{2(a-ib)}{r^2}e^{2i\theta}\pm \frac{2\sqrt{a^2+b^2}}{r^2}.
    \end{gather*}
    Consider the case $b=0$ and $a>0$. We have
    \begin{gather*}
        V_{+}=\frac{2a}{r^2}\left(\cos 2\theta+1,\sin 2\theta\right)=\frac{4a\cos\theta}{r^2} e^{i\theta},\implies V_{+}||\ e^{i\theta}\implies \cF_X\ \text{rays tending to $p_j$},\\\\  V_{-}=\frac{2a}{r^2}\left(\cos 2\theta-1,\sin 2\theta\right)=\frac{4a\sin\theta}{r^2} e^{i\left(\theta+\frac{\pi}{2}\right)},\implies V_{-}\perp e^{i\theta}\implies f_X\ \text{circles around $p_j$}.
    \end{gather*}
    The case $b=0$ and $a<0$ just swaps the maximal and minimal directions $V_{\pm}$, so the proof is the same. Finally consider the case $b\neq 0$. Consider a curve $z(t)=x(t)+iy(t)$ integrating the maximal principal foliation $\cF_X$, \textit{i.e.} $   V_{+}=\left(\frac{dx}{dt},\frac{dy}{dt}\right)$. Then
    \begin{equation*}
     \frac{dr}{dt}e^{i\theta}+ir\frac{d\theta}{dt}e^{i\theta}=\frac{dx}{dt}+i\frac{dy}{dt}=\frac{2(a-ib)}{r^2}e^{2i\theta}+ \frac{2\sqrt{a^2+b^2}}{r^2}.
    \end{equation*}
    Taking real and imaginary parts we find
    \begin{gather*}
        r^2\frac{dr}{dt}=2\left(a+\sqrt{a^2+b^2}\right)\cos\theta+2b\sin \theta,\quad r^3\frac{d\theta}{dt}=2\left(a-\sqrt{a^2+b^2}\right)\sin\theta-2b\cos \theta,
    \end{gather*}
    which implies that
    \begin{equation}
        \frac{1}{r}\frac{dr}{d\theta}=-\frac{a+\sqrt{a^2+b^2}}{b}\coloneq\gamma\neq 0.
    \end{equation}
    Therefore we see that $r=e^{\gamma \theta}$ so that the lines of $\cF_X$ spiral to the end $p_j$. By a similar argument, when $b\neq 0$ the lines of $f_X$ also spiral to the end $p_j$ and the proof is completed.
\end{proof}
\begin{cor}\label{CriterioclassC}
    Let $X:\Sigma\setminus \{p_1,\ldots, p_N\}\to \bR^3$ be a complete minimal immersion of finite total curvature. $X\in \cC$ if and only if the associated complex function has a pole of order two with non-zero real quadratic limit at each end $p_j$.
\end{cor}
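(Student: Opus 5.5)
The corollary follows by reading Theorem \ref{dynamicslinescurvature} end by end. Definition \ref{classC} asks, for each end $p_j$, that $\cF_X$ or $f_X$ have a closed curvature line in an arbitrarily small neighborhood $V_j$ of $p_j$. So it suffices to show, for a single end: there is a closed curvature line near $p_j$ if and only if $\phi$ has a pole of order exactly two at $p_j$ with $a+ib=\lim_{z\to 0}z^2\phi(z)$ real and nonzero, i.e. $b=0$, $a\neq 0$.

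\emph{Sufficiency.} Assume the pole has order two and the quadratic limit is real and nonzero. By item (b) of Theorem \ref{dynamicslinescurvature} the limit is automatically nonzero, and since $b=0$ item (b.1) applies. If $a>0$, the lines of $f_X$ are circles around $p_j$ in the isothermic chart of \cite[Proposition 4.5]{SotoPrincipalLinesCMC}; if $a<0$, the lines of $\cF_X$ are. Each circle $|z|=r$ with $r$ small is a closed curvature line in $V_j$, so the condition of Definition \ref{classC} holds at $p_j$.

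\emph{Necessity.} Suppose some curvature line $\gamma\subset V_j\setminus\{p_j\}$ is closed. I first rule out a Hopf differential that is holomorphic (or identically zero) at $p_j$, so that only poles of order $n\geq 1$ remain and the theorem's cases are exhaustive.
\begin{itemize}
\item If $Q_H\equiv 0$, the surface is a plane, which has a single planar end; this case is excluded or handled separately.
\item If $Q_H$ extends holomorphically to $p_j$, then the Gauss map and the conformal factor are asymptotically controlled there. I expect finite total curvature with the immersion condition to force a pole of order at least $2$ at every end: $\eta$ has a pole of order at least $2$ at each end, while $dg$ is generically regular, and $Q_H$ is proportional to $dg\,\eta$.
\end{itemize}
Now take the cases in turn.
\begin{itemize}
\item For $n=1$ or $n\geq 3$, items (a) and (c) say the lines either tend to $p_j$ or avoid a punctured neighborhood, so no closed line lies close to $p_j$.
\item For $n=2$ with $b\neq 0$, item (b.2) gives logarithmic spirals $r=e^{\gamma\theta}$ with $\gamma\neq 0$, which are never closed.
\item For $n=2$ with $b=0$, we are in the required situation.
\end{itemize}

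One must also exclude a closed line that winds around $p_j$ without being one of the model circles. In the normal-form chart every leaf near $p_j$ is explicitly a ray, a circle or a spiral, so this is automatic once $V_j$ lies inside that chart.

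The main obstacle is the step excluding $\operatorname{ord}_{p_j}Q_H\geq -1$. My plan is to write $Q_H=-\tfrac12\,dg\,\eta$ (up to a constant) and use the order of $\eta$ at an end. By completeness and Remark \ref{equivalent immersion condition}, $\eta$ has a pole of order at least $2$ at $p_j$, after moving the pole of $g$ by a rotation. Hence $Q_H$ has a pole of order at least $2$ unless $dg$ vanishes to high order, in which case item (c) of Theorem \ref{dynamicslinescurvature} or its analogue for that behavior still forbids closed lines.
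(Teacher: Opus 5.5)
Your sufficiency direction and your treatment of the double-pole case are correct. The overall strategy, reading Theorem \ref{dynamicslinescurvature} end by end, is exactly how the paper obtains the corollary; the paper gives no further argument. The gap is in the step you flagged yourself.

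The expectation that completeness and the immersion condition force $Q_H$ to have a pole of order at least two at every end is false. By Theorem \ref{JorgeMeeksHopf}, $\operatorname{ord}_{p_j}Q_H=r_{p_j}(g)-\nu_j-1$. The ramification $r_{p_j}(g)$ is invariant under rotations, so moving the pole of $g$ does not make $dg$ ``generically regular'' at $p_j$. At every planar end one has $\nu_j=1$. Normalize $g(p_j)=0$ with multiplicity $k$; then $\operatorname{ord}_{p_j}dh=k-2$. The value $k=1$ would give $dh$ a simple pole, i.e.\ a catenoidal end, so $k\geq 2$ and $\operatorname{ord}_{p_j}Q_H=k-3\geq -1$. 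For the Costa surface ($g=A/\wp'$, $\eta=\wp\,dz$), the planar end $z=0$ has $k=3$, and $Q_H=-\frac{1}{2}\frac{dg}{g}\,dh$ is holomorphic and nonzero there.

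Your fallback is also backwards. High-order vanishing of $dg$ \emph{lowers} the pole order of $Q_H$, so item (c), which concerns poles of order at least three, says nothing about it. The case $\operatorname{ord}_{p_j}Q_H\geq 0$ lies outside Theorem \ref{dynamicslinescurvature} altogether, and your proposal leaves it open. In addition, item (a) as stated only says that one leaf tends to $p_j$; concluding ``no closed leaf'' requires the local phase portrait.

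You can close this with a Poincar\'e--Hopf argument that handles all orders at once.
\begin{enumerate}
\item Take a coordinate disk $D$ centered at $p_j$ with $\phi=z^kh(z)$, $h(0)\neq 0$, and no zeros of $\phi$ in $D\setminus\{0\}$. This is possible because $Q_H\not\equiv 0$ is meromorphic on $\Sigma$.
\item A closed curvature line $\gamma\subset D\setminus\{0\}$ is an embedded circle bounding a disk $\Delta\subset D$. The principal line field is tangent to $\partial\Delta$, so its indices inside $\Delta$ sum to $1$.
\item If $0\notin\Delta$, the line field is regular on $\Delta$, a contradiction. Hence $0\in\Delta$.
\item The index at $0$ of the line fields of $\phi\,dz^2$ is $-k/2$. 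It must equal $1$, so $k=-2$.
\item Only now does item (b) enter, with (b.2) excluding $b\neq 0$.
\end{enumerate}
This replaces your separate treatment of $n=1$, $n\geq 3$ and the holomorphic case.

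This matters beyond your proposal. The paper's proof of Theorem \ref{PlanarEndsDoNotAdmitClosedLines} applies this corollary precisely at planar ends, where $\operatorname{ord}_{p_j}Q_H\geq -1$ always holds. The computation above shows directly that a planar end never gives $Q_H$ a double pole.
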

\begin{figure}
     \centering
     \begin{subfigure}[b]{0.25\textwidth}
         \centering
         \includegraphics[width=\textwidth]{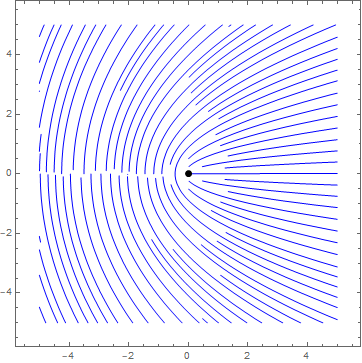}
         \caption{$n=1$.}
         \label{hopf n=-1}
     \end{subfigure}
     \hfill
     \begin{subfigure}[b]{0.25\textwidth}
         \centering
         \includegraphics[width=\textwidth]{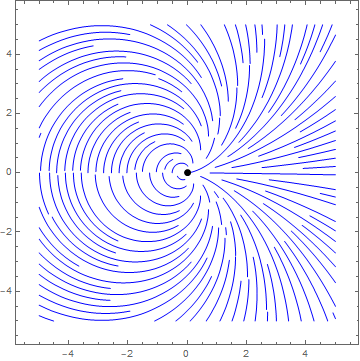}
         \caption{$n=3$.}
         \label{hopf n=-3}
     \end{subfigure}
      \hfill
     \begin{subfigure}[b]{0.25\textwidth}
         \centering
         \includegraphics[width=\textwidth]{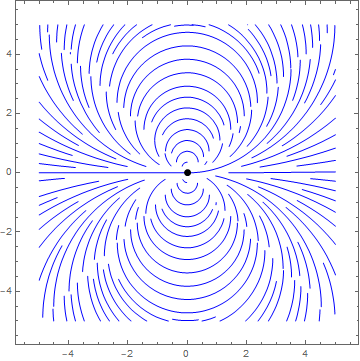}
         \caption{$n=4$.}
         \label{hopf n=-5}
     \end{subfigure}
     \caption{Maximal principal foliation $F_X$ of Theorem \ref{dynamicslinescurvature}, Items (a) and (c).}
     \end{figure}
     \begin{defn}[{\cite{MartinLopezSurvey}}]\label{JorgeMeeksmultiplicities}
    \normalfont{
Let $X:\Sigma\setminus\{p_1,\ldots, p_n\}\to \bR^3$ be a complete minimal immersion with finite total curvature, where $\Sigma$ is a closed Riemann surface. Let $\Phi=(\phi_1,\phi_2,\phi_3)$ be its Weierstrass holomorphic data. The \textit{Jorge-Meeks multiplicity} $\nu_i$ associated with the end $p_i$ is defined by
\begin{equation*}
    \nu_i+1\coloneqq \text{maximum order of a \textit{pole} of $\phi_1,\phi_2,\phi_3$ at $p_i$}.
\end{equation*}
Equivalently,
\begin{equation*}
    \nu_i=\max\{\left(ord_{p_i}(\phi_j)\right)_{-}\}_{j=1}^3-1,
\end{equation*}
where, for any real number $a$, we denote by $a_{-}=\max\{-a,0\}$ the negative part of $a$. 
    }
\end{defn}

We now proceed to make an analysis to relate the multiplicities of Jorge-Meeks formula $\nu_j$ with the order of the Hopf differential at the ends $p_j$. We have the following:
\begin{thm}\label{JorgeMeeksHopf}
 Let $X:\Sigma\setminus \{p_1,\ldots, p_N\}\to \bR^3$ be a complete orientable minimal immersion of finite total curvature. Then at the end $p_j$ we have
    \begin{equation*}
        \nu_j=r_{p_j}(g)-ord_{p_j}Q_H-1.
    \end{equation*}
\end{thm}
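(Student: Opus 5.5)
The plan is to express both sides through the Weierstrass data $(g,\eta)$, which by Osserman's theorem extend meromorphically to $\Sigma$. Once everything is written as orders of meromorphic objects at $p_j$, the identity reduces to an order count with two cases.

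\emph{Step 1: the Hopf differential in Weierstrass terms.} I would first recall the standard identity, valid up to a nonzero constant factor,
\begin{equation*}
Q_H=\langle X_{zz},N\rangle\,dz^2=-\tfrac12\,dg\,\eta .
\end{equation*}
It follows by differentiating $X_z=\tfrac12\Phi$ once more and pairing with the normal $N$, written via stereographic projection of $g$. This is the same computation behind Proposition \ref{Hopf zeros ramification Gauss}. Since $dg$ and $\eta$ are meromorphic on $\Sigma$, it gives
\begin{equation*}
ord_{p_j}Q_H=ord_{p_j}(dg)+ord_{p_j}(\eta).
\end{equation*}

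\emph{Step 2: the Jorge--Meeks multiplicity.} Next I would rewrite $\nu_j$. From \eqref{PhiWeierstrass} we have $\eta=\phi_1-i\phi_2$, $g^2\eta=-(\phi_1+i\phi_2)$ and $g\eta=\phi_3$. Conversely, each $\phi_k$ is a linear combination of $\eta, g\eta, g^2\eta$. Hence the maximal pole order among $\phi_1,\phi_2,\phi_3$ equals the maximal pole order among $\eta, g\eta, g^2\eta$, that is, $-\min\{ord\,\eta,\ ord\,\eta+ord\,g,\ ord\,\eta+2\,ord\,g\}$. These orders are negative because the end is a pole of $\Phi$ by completeness.

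\emph{Step 3: case analysis on $g(p_j)$.} I would split according to whether $g(p_j)$ is finite or infinite.

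If $g(p_j)$ is finite, so $ord_{p_j}g\ge 0$, the minimum in Step 2 is $ord\,\eta$, and $\nu_j=-ord_{p_j}\eta-1$. In a chart where $g$ is finite, Lemma \ref{BranchPointCriteria} gives $ord_{p_j}(dg)=r_{p_j}(g)$, applied to $g-g(p_j)$. Substituting into Step 1 gives $r_{p_j}(g)-ord\,Q_H-1=-ord\,\eta-1=\nu_j$.

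If $g$ has a pole of order $m\ge1$, the minimum is $ord\,\eta-2m$, so $\nu_j=2m-ord\,\eta-1$. Here $r_{p_j}(g)=m-1$ and $ord(dg)=-m-1$. Then $r-ord\,Q_H-1=(m-1)+(m+1)-ord\,\eta-1=\nu_j$.

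The only step needing care is Step 1, namely fixing the correct formula for the Hopf differential and its normalization (the constant is irrelevant for orders). A second point to check is that in the pole case the relation between $r_{p_j}(g)$ and $ord(dg)$ is computed in the chart $1/g$, not read off from Lemma \ref{BranchPointCriteria} naively. Everything else is bookkeeping of orders.
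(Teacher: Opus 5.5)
Your proof is correct and is essentially the paper's argument: the paper also uses $Q_H=-\frac12\,dg\otimes\eta$ (written as $-\frac12\frac{dg}{g}\otimes dh$) and compares orders case by case according to $\mathrm{ord}_{p_j}(g)$. The only difference is bookkeeping. The paper splits into three cases ($g$ has a zero, a pole, or neither at $p_j$), with subcases on the sign of $\mathrm{ord}_{p_j}dh$ that incidentally omit $\mathrm{ord}_{p_j}dh=0$. Working with $dg$ and $\eta$ lets you merge the zero and regular-value cases into a single uniform case.
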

\begin{proof}
    Let $\psi:U\subset \bC\to \Sigma$ be a chart centered at $p_j$, and write $n=ord_{p_j}(g)$, $m=ord_{p_j}dh$,
    \begin{equation*}
        g\circ \psi(z)=z^{n}H(z),\quad \psi^{*}dh(z)=z^m\Tilde{H}(z)dz.
    \end{equation*}
    By the completeness condition and equation \eqref{conformal factor immersion}, we must have that
    \begin{equation*}
        m<\abs{n}.
    \end{equation*}
    We have two cases to consider
    \begin{enumerate}
        \item Case $1.$ $\ n\neq0$. By Proposition \ref{dichotomy} $\frac{dg}{g}$ has a simple pole at $p_j$ which implies 
        \begin{equation*}
            ord_{p_j}Q_H=m-1.
        \end{equation*}
        We study the possibilities:
        \begin{itemize}
            \item $0<n,\ 0<m<n$ or  $m<0<n$ implies that
            \begin{equation*}
                \nu_j+1=n-m,\quad r_{p_j}(g)=n-1,
            \end{equation*} so that
            \begin{equation*}
                 \nu_j=n-1-m=n-1-(m-1)-1=r_{p_j}(g)-ord_{p_j}Q_H-1.
            \end{equation*}
            \item $n<0<m<-n$ or $n<0$, $m<0$ implies
            \begin{equation*}
                \nu_j+1=-n-m,\quad r_{p_j}(g)=-n-1,
            \end{equation*}
            so that
             \begin{equation*}
                 \nu_j=-n-1-m=-n-1-(m-1)-1=r_{p_j}(g)-ord_{p_j}Q_H-1.
            \end{equation*}
        \end{itemize}
     \item Case $2.$ $n=0.$ By Proposition \ref{dichotomy} $\frac{dg}{g}$ has a zero of multiplicity $r_{p_j}(g)$ at $p_j$, which implies 
     \begin{equation}\label{order Q}
         ord_{p_j}Q_H=m+r_{p_j}(g).
     \end{equation}
     Since $g(p_j)\neq 0,\infty$ and $m<0$ we have that
     \begin{equation*}
         \nu_j+1=-m,
     \end{equation*}
     which implies by equation \eqref{order Q} that
     \begin{equation*}
         \nu_j=-m-1=r_{p_j}(g)-ord_{p_j}Q_H-1,
     \end{equation*}
     as we wanted to prove.
    \end{enumerate}
\end{proof}
\begin{figure}
     \centering
     \begin{subfigure}[b]{0.29\textwidth}
         \centering
\includegraphics[width=\textwidth]{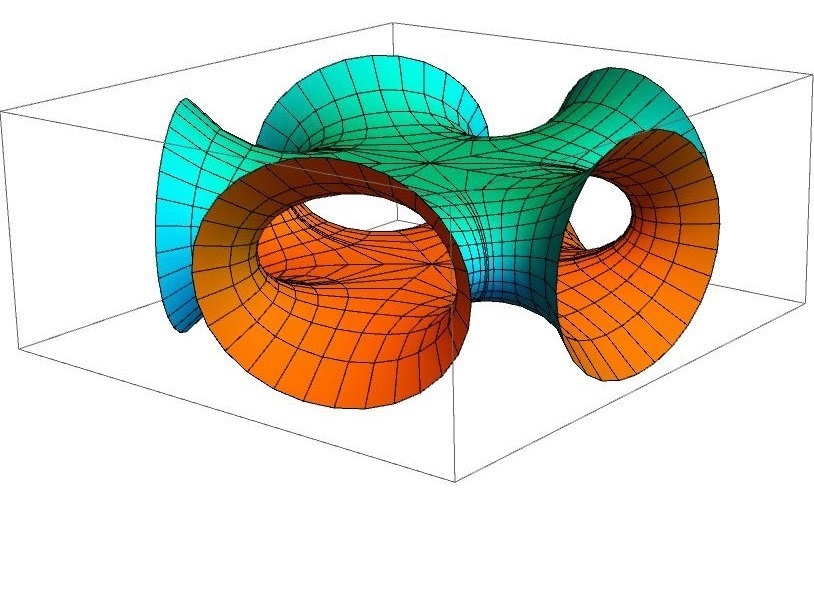}
         \caption{Jorge-Meeks nodoid. Image Credit: M. Weber.}
         \label{Jorge Meeks nodoid}
     \end{subfigure}
     \hfill
     \begin{subfigure}[b]{0.25\textwidth}
         \centering
\includegraphics[width=\textwidth]{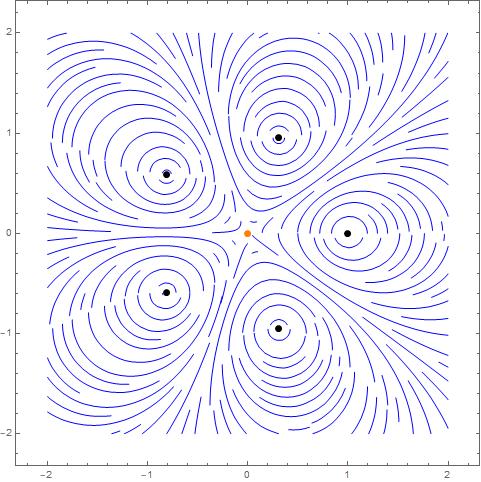}
         \caption{Jorge-Meeks curvature lines for $n=4$ i.e, a sphere with $5$ ends.}
         \label{Jorge Meeks curvature lines}
     \end{subfigure}
      \hfill
     \begin{subfigure}[b]{0.25\textwidth}
         \centering
\includegraphics[width=\textwidth]{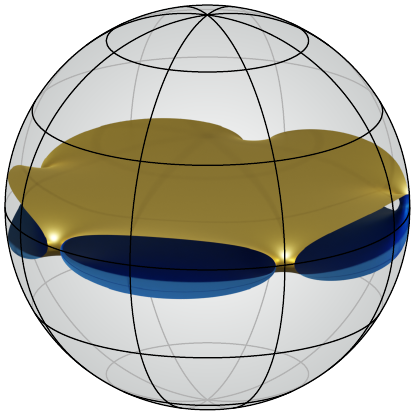}
         \caption{Free boundary minimal nodoids. Image Credit: M. Schulz \cite{SchulzGallery}.}
         \label{free boundary nodoids}
     \end{subfigure}
     \caption{Jorge-Meeks $n+1$-nodoids.}
     \end{figure}

\begin{defn}\label{totalramificationorderends}
    \normalfont{Let $g:\Sigma\setminus\{p_1,\ldots p_N\}\to \overline{\bC}$ be the Gauss map of a finite total curvature minimal surface. We define \textit{the total order of ramification of the Gauss map at the ends $p_j$} as the number
    \begin{equation*}
        r(g)\coloneqq \sum_{j=1}^Nr_{p_j}(g).
    \end{equation*}}
\end{defn}
\begin{rem}\label{RamificationEstimate}
\normalfont{
    Notice that according with our definition, we have $0\leq r(g)\leq N(\deg(g)-1)$.}
\end{rem}
\begin{thm}
    Let $X:M\to \bR^3$ be a non-totally geodesic complete orientable minimal immersion of finite total curvature where $M=\Sigma\setminus\{p_1,\ldots,p_N\}$ and let $Q_H$ be the Hopf differential. Then
    \begin{equation*}
    \cX(\Sigma)=-2\deg(g)-\sum_{j=1}^Nord_{p_j}Q_H+r(g).
    \end{equation*}
\end{thm}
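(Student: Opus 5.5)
The plan is to compute the degree of the Hopf differential, extended meromorphically to $\Sigma$ by Proposition \ref{Hopf free poles interior}, in two ways and compare. Any meromorphic quadratic differential on the closed surface $\Sigma$ has total degree
\begin{equation*}
\deg Q_H = 2\deg K_\Sigma = -2\cX(\Sigma).
\end{equation*}
To justify this, note that $Q_H\neq 0$ because $X$ is not totally geodesic. Then the quotient of $Q_H$ by the square of any fixed nonzero meromorphic one-form is a meromorphic function, and a meromorphic function has degree zero.

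Next I sum the orders of $Q_H$ at points of $\Sigma$. The relevant points are the interior points of $M$ and the ends.
\begin{itemize}
\item On $M$ there are no poles, by Proposition \ref{Hopf free poles interior}.
\item By Proposition \ref{Hopf zeros ramification Gauss}, the order of $Q_H$ at $q\in M$ equals $r_q(g)$.
\end{itemize}
This gives
\begin{equation*}
-2\cX(\Sigma)=\sum_{q\in M} r_q(g)+\sum_{j=1}^N ord_{p_j}Q_H .
\end{equation*}

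The remaining step is to express the total ramification of $g$ over $M$ by Riemann--Hurwitz applied to $g:\Sigma\to\bS^2$. The map $g$ is nonconstant, again because $X$ is not totally geodesic, so
\begin{equation*}
\cX(\Sigma)=2\deg(g)-\sum_{q\in\Sigma} r_q(g)=2\deg(g)-\sum_{q\in M}r_q(g)-r(g),
\end{equation*}
using Definition \ref{totalramificationorderends} for the contribution of the ends. Hence $\sum_{q\in M}r_q(g)=2\deg(g)-r(g)-\cX(\Sigma)$. Substituting this into the previous identity gives
\begin{equation*}
-2\cX(\Sigma)=2\deg(g)-r(g)-\cX(\Sigma)+\sum_j ord_{p_j}Q_H,
\end{equation*}
that is, $\cX(\Sigma)=-2\deg(g)-\sum_j ord_{p_j}Q_H+r(g)$, which is the claim.

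The only delicate point is the identification of the zero order of $Q_H$ on $M$ with the ramification order of $g$. This should be checked carefully at points where $g$ has a pole, where one writes $Q_H=-\tfrac12\,\eta\,dg$ (up to a constant). Proposition \ref{Hopf zeros ramification Gauss} already asserts this identification, so I will take it as given. I will also make sure $\deg Q_H=4\,\mathrm{genus}-4$ is used consistently with the convention $\cX(\Sigma)=2-2\,\mathrm{genus}$.
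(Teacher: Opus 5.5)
Your proof is correct, but it takes a different route from the paper's. The paper first proves a local identity at each end. Theorem \ref{JorgeMeeksHopf} is established by a case analysis on $n=ord_{p_j}(g)$ and $m=ord_{p_j}dh$, using completeness to force $m<|n|$, and it gives $\nu_j=r_{p_j}(g)-ord_{p_j}Q_H-1$. The paper then sums over the ends and inserts the result into the Jorge--Meeks formula $2\deg(g)=-\cX(\Sigma)+\sum_j(\nu_j+1)$.

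You instead compute the degree of the meromorphic quadratic differential $Q_H$ on $\Sigma$ and apply Riemann--Hurwitz to $g$. The interior contributions are matched by Proposition \ref{Hopf zeros ramification Gauss}. That identification does hold at poles of $g$: there $\eta$ has a zero of order $2k$ and $dg$ a pole of order $k+1$, so $Q_H$ has order $k-1=r_q(g)$.

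Your approach is more elementary and self-contained. It needs only Osserman's meromorphic extension of $(g,\eta)$ and standard facts about compact Riemann surfaces, not the Jorge--Meeks formula, whose proof requires an analysis of the ends. It also shows that the statement is Riemann--Hurwitz in disguise: $\sum_{q\in\Sigma}\bigl(r_q(g)-ord_qQ_H\bigr)=2\deg(g)+\cX(\Sigma)$, and the summand vanishes at every point of $M$.

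The paper's route has a different payoff. It produces the pointwise relation of Theorem \ref{JorgeMeeksHopf} between Jorge--Meeks multiplicities and the pole orders of the Hopf differential. That relation, not the global formula, is what gets reused later, in Corollary \ref{Chern-OssermanClosed}, Proposition \ref{ramificationorderpunctureprojectiveclosedlines} and Theorem \ref{Nonexistencetwopuncturesclosed}. Conversely, combining your global identity with that local relation recovers the Jorge--Meeks formula itself.
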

\begin{proof}
     Indeed, as a consequence of Theorem \ref{JorgeMeeksHopf}, we have
\begin{equation*}
    \sum_{j=1}^N(\nu_j+1)=\sum_{j=1}^Nr_{p_j}(g)-\sum_{j=1}^Nord_{p_j}Q_H.
\end{equation*}
The result then follows by an application of the classical Jorge-Meeks formula \cite{MartinLopezSurvey}, 
    \begin{equation*}
    2deg(g)=-\mathcal{X}(\Sigma)+\sum_{i=1}^n(\nu_i+1).
\end{equation*}
\end{proof}

In our particular research of closed lines of curvature, we have the following consequence:
\begin{cor}\label{ClosedCurvatureLineFormula}
  Let $X:\Sigma\setminus\{p_1,\ldots,p_N\}\to \bR^3$ be a non-totally geodesic complete orientable minimal immersion of finite total curvature with closed curvature lines around the ends. Then
    \begin{equation*}
    \cX(\Sigma)=-2\deg(g)+2N+r(g).
    \end{equation*}
\end{cor}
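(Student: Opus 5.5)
The plan is to specialize the formula of the preceding theorem,
\begin{equation*}
\cX(\Sigma)=-2\deg(g)-\sum_{j=1}^N ord_{p_j}Q_H+r(g),
\end{equation*}
using the fact that membership in $\cC$ fixes the order of the Hopf differential at every end. By Corollary \ref{CriterioclassC}, since $X\in\cC$, the associated complex function $\phi$ of $Q_H$ has a pole of order exactly two at each $p_j$, with non-zero real quadratic limit. By Lemma \ref{function in terms of order}, applied to $\phi$ in a chart centered at $p_j$, this means $ord_{p_j}Q_H=-2$ for every $j=1,\ldots,N$.

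Summing over the $N$ ends gives $\sum_{j=1}^N ord_{p_j}Q_H=-2N$. Substituting this into the formula above yields $\cX(\Sigma)=-2\deg(g)+2N+r(g)$, which is the claim. The hypothesis that $X$ is not totally geodesic is exactly the one needed for the preceding theorem. It also guarantees that $Q_H\not\equiv 0$, so that the orders $ord_{p_j}Q_H$ are well defined.

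The only point that needs care is that the order of a meromorphic quadratic differential at a point does not depend on the chart. Under a change of coordinates $z=z(w)$ with $z'(0)\neq 0$, we have $\phi(z)dz^2=\phi(z(w))z'(w)^2dw^2$, and the factor $z'(w)^2$ neither vanishes nor has a pole at $0$. Hence the statement ``pole of order two'' from Corollary \ref{CriterioclassC} translates unambiguously into $ord_{p_j}Q_H=-2$. Beyond this there is no real obstacle: the corollary is a direct substitution into the previous theorem.
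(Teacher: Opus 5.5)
Your proposal is correct and matches the paper's argument, which the paper leaves implicit. Corollary \ref{CriterioclassC} gives $ord_{p_j}Q_H=-2$ at each of the $N$ ends, and substituting $\sum_{j=1}^N ord_{p_j}Q_H=-2N$ into the preceding formula $\cX(\Sigma)=-2\deg(g)-\sum_{j=1}^N ord_{p_j}Q_H+r(g)$ yields the claim; your remark on the chart-independence of the order is a harmless extra check.
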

We are ready to prove Theorem \ref{obstructionplanar}:
\begin{thm}\label{PlanarEndsDoNotAdmitClosedLines}
   Let $X:\Sigma\setminus\{p_1,\ldots p_N\}\to \bR^3$ a complete minimal immersion of finite total curvature. If the end $p_j$ is an embedded planar end, then it does not admit a closed curvature line around it.
\end{thm}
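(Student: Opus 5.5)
The plan is to use Corollary \ref{CriterioclassC}, together with the order computation in the proof of Theorem \ref{JorgeMeeksHopf}. A closed curvature line near $p_j$ forces the Hopf differential to have a pole of order exactly two at $p_j$, that is, $ord_{p_j}Q_H=-2$. I will show that an embedded planar end always has $ord_{p_j}Q_H\geq -1$.

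First I normalize the end. After a rotation of $\bR^3$, which does not affect the curvature lines, I may assume the limit normal is vertical as in Theorem \ref{embeddedends}. Composing with the antipodal map if necessary, I may further take $g(p_j)=0$. In a chart $z$ centered at $p_j$ write $g=z^nH(z)$ with $n\geq 1$, and $dh=\phi_3=z^m\tilde H(z)dz$. In this notation $r_{p_j}(g)=n-1$, and Case 1 of the proof of Theorem \ref{JorgeMeeksHopf} gives $ord_{p_j}Q_H=m-1$, because $Q_H$ is a multiple of $\frac{dg}{g}\,dh$ and $\frac{dg}{g}$ has a simple pole at $p_j$.

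Next I use embeddedness. An embedded end has Jorge-Meeks multiplicity $\nu_j=1$, which is the standard fact underlying the asymptotics \eqref{asymptoticend}. Theorem \ref{JorgeMeeksHopf} then gives
\begin{equation*}
1=(n-1)-(m-1)-1,
\end{equation*}
so $m=n-2$.

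Finally I use planarity via Remark \ref{catenoidalvsplanar}: planar means $\eval{Res}_{p_j}dh=0$.
\begin{itemize}
\item If $n=1$, then $m=-1$, so $dh$ has a genuine simple pole with leading coefficient $\tilde H(0)\neq 0$. Its residue is therefore nonzero and the end is catenoidal, a contradiction.
\item Hence $n\geq 2$, so $m=n-2\geq 0$ and $ord_{p_j}Q_H=m-1\geq -1\neq -2$.
\end{itemize}
By Theorem \ref{dynamicslinescurvature}, items (a) and (c), or by holomorphy of $Q_H$ at $p_j$ if the order is nonnegative, no closed curvature line can surround $p_j$. The case of a pole of order at most one needs no quadratic limit, since in item (a) one line tends to $p_j$. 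The nonnegative case may need a short remark that curvature lines near a regular or umbilic point of a holomorphic quadratic differential cannot encircle it, so this is where extra care may be needed.

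I expect the main obstacle to be justifying $\nu_j=1$ for embedded ends, which is classical but not stated earlier in the paper. I would cite it from \cite{MartinLopezSurvey}, or derive it directly from \eqref{asymptoticend}: the coordinate functions grow linearly in $r\sim 1/\abs{z}$, so $\Phi$ has poles of order exactly two.
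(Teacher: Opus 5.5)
Your argument is correct and is essentially the paper's proof. Both normalize $g(p_j)=0$, use $Q_H=-\frac{1}{2}\frac{dg}{g}\otimes dh$ with $\frac{dg}{g}$ having a simple pole to get $ord_{p_j}Q_H=ord_{p_j}dh-1$, and then combine Corollary \ref{CriterioclassC} with Remark \ref{catenoidalvsplanar}. The only difference is your detour through $\nu_j=1$, which you flag as the main obstacle, and it is not needed. The paper argues directly that $ord_{p_j}Q_H=-2$ forces $dh$ to have a simple pole, whose residue $\tilde H(0)$ is automatically nonzero, so the end would be catenoidal.
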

\begin{proof}
     Assume by contradiction that the surface has a closed curvature line around $p_j$. After rotating the surface, we can assume that $g$ has a zero at $p_j$. This implies by Proposition \ref{dichotomy} that $\frac{dg}{g}$ has a simple pole at $p_j$ with residue equal to $mult_{p_j}(g)\neq 0$. Since, by Corollary \ref{CriterioclassC}, the Hopf differential has a pole of order two at $p_j$ with $\lim_{z\to 0}z^2\phi(z)\in \bR^{*}$ we must have that $dh$ has a simple pole at $p_j$ with non-zero residue. By Theorem \ref{embeddedends} 
     and Remark \ref{catenoidalvsplanar} this implies that, the embedded end $p_j$ must be catenoidal which is a contradiction.
\end{proof}

 The following Corollary is a classical inequality, which applies not only for the case where the surface contains closed curvature lines.
\begin{cor}\label{Chern-OssermanClosed}
      Let $X:\Sigma\setminus\{p_1,\ldots,p_N\}\to \bR^3$ be a complete orientable minimal immersion of finite total curvature with closed curvature lines around the ends. Denote by $\gamma$ the genus of $\Sigma$. Then we have the inequality
    \begin{equation}\label{Chern-Osserman}
1-\gamma+\deg(g)\geq N.
    \end{equation}
    Moreover the equality holds if and only if each end is catenoidal.
\end{cor}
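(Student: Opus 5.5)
The plan is to read the inequality off Corollary \ref{ClosedCurvatureLineFormula}, and then to decide when equality holds by interpreting the ramification of $g$ at the ends through Theorem \ref{JorgeMeeksHopf}. We may assume $X$ is not totally geodesic, since for the plane the statement is immediate. Write $\cX(\Sigma)=2-2\gamma$ in Corollary \ref{ClosedCurvatureLineFormula}. This gives
\begin{equation*}
2-2\gamma=-2\deg(g)+2N+r(g),\qquad\text{that is,}\qquad 1-\gamma+\deg(g)=N+\tfrac{1}{2}r(g).
\end{equation*}
Since $r(g)=\sum_j r_{p_j}(g)\geq 0$ (Remark \ref{RamificationEstimate}), inequality \eqref{Chern-Osserman} follows at once. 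Moreover, equality holds if and only if $r_{p_j}(g)=0$ for every $j$.

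It remains to show that, for surfaces in $\cC$, the condition $r_{p_j}(g)=0$ is equivalent to $p_j$ being a catenoidal end. By Corollary \ref{CriterioclassC}, the Hopf differential has a pole of order exactly two at each end, so $ord_{p_j}Q_H=-2$. Theorem \ref{JorgeMeeksHopf} then gives
\begin{equation*}
\nu_j=r_{p_j}(g)+2-1=r_{p_j}(g)+1.
\end{equation*}
Hence $r_{p_j}(g)=0$ exactly when the Jorge--Meeks multiplicity satisfies $\nu_j=1$.

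Next I invoke the classical fact (Jorge--Meeks) that an end of a complete finite total curvature minimal surface has multiplicity $\nu_j=1$ if and only if it is embedded. Theorem \ref{embeddedends} classifies embedded ends as planar or catenoidal. Theorem \ref{PlanarEndsDoNotAdmitClosedLines} rules out planar ends carrying a closed curvature line, so every embedded end of a surface in $\cC$ is catenoidal. Conversely, a catenoidal end is by definition embedded, so $\nu_j=1$ and therefore $r_{p_j}(g)=0$. Combining the two directions, equality in \eqref{Chern-Osserman} holds if and only if every end is catenoidal.

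The only step needing care is the identification $\nu_j=1\iff$ embedded end. This is quoted from the Jorge--Meeks theory rather than proved in the paper. If a more self-contained argument is preferred, one can check it directly from the Weierstrass data. Rotate so that $g(p_j)=0$; then $r_{p_j}(g)=0$ forces a simple zero of $g$. The closed-line condition forces $dh$ to have a simple pole with nonzero residue, as in the proof of Theorem \ref{PlanarEndsDoNotAdmitClosedLines}. These are precisely the data of a catenoidal end.
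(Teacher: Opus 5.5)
Your proposal is correct and follows the paper's own argument. The inequality comes from Corollary~\ref{ClosedCurvatureLineFormula} together with $r(g)\geq 0$, and for the equality case you use Theorem~\ref{JorgeMeeksHopf} with $ord_{p_j}Q_H=-2$ to get $\nu_j=1$, hence embeddedness, and then Theorem~\ref{PlanarEndsDoNotAdmitClosedLines} to rule out planar ends; you also spell out the converse (catenoidal $\Rightarrow \nu_j=1 \Rightarrow r_{p_j}(g)=0$), which the paper leaves implicit. One small correction: the plane is not a case where the statement is ``immediate'', since there equality $1-0+0=N$ would hold with a planar end; it is instead excluded by the hypothesis, because $Q_H\equiv 0$ has no pole of order two at the end (Corollary~\ref{CriterioclassC}).
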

\begin{proof}
    The inequality \eqref{Chern-Osserman} is the well-known Chern-Osserman inequality, which holds for minimal immersions of finite total curvature without the assumption on the closed curvature lines around the ends. We provide an alternative proof with the extra hypothesis of closed curvature lines. Indeed, the inequality comes from Corollary \ref{ClosedCurvatureLineFormula} and the fact that the total order of ramification of the Gauss map at the ends satisfies $r(g)\geq 0$. Since, $r_{p_j}(g)\geq 0$ for all $j=1,\ldots N$, we see that the equality holds if and only if $r_{p_j}=0$ for all $j$. Since by hypothesis $ord_{p_j}Q_H=-2$, we must have, by Theorem \ref{JorgeMeeksHopf}, that $\nu_j=1$, so the end is embedded by \cite[Proposition 4.3.13]{CarlosThesis} and catenoidal by Theorem \ref{PlanarEndsDoNotAdmitClosedLines}. 
\end{proof}

Using Corollaries \ref{Chern-OssermanClosed}, \ref{ClosedCurvatureLineFormula} and Remark \ref{RamificationEstimate} we can prove an inequality which can be interpreted as saying that we either have a good upper or lower bound on the number of ends of a minimal immersion with closed curvature lines.
\begin{cor}\label{inequalityclosedcurvature}
       Let $X:\Sigma\setminus\{p_1,\ldots,p_N\}\to \bR^3$ be a complete orientable minimal immersion of finite total curvature with closed curvature lines around the ends. Then
       \begin{equation*}
         \frac{2N}{1+\deg(g)}\leq  2-\frac{2\gamma}{1+\deg(g)}\leq N,
       \end{equation*}
       where $\gamma$ is the genus of the closed Riemann surface $\Sigma$.
\end{cor}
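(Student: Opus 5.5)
The plan is to clear denominators and reduce both inequalities to the two formulas already established: the Chern--Osserman inequality of Corollary \ref{Chern-OssermanClosed} and the Euler characteristic identity of Corollary \ref{ClosedCurvatureLineFormula}. Write $d=\deg(g)$. Since $d\geq 0$, we have $1+d>0$, so multiplying through by $1+d$ preserves both inequalities. The middle term becomes
\begin{equation*}
(1+d)\left(2-\frac{2\gamma}{1+d}\right)=2+2d-2\gamma .
\end{equation*}
The statement is therefore equivalent to the pair
\begin{equation*}
2N\leq 2+2d-2\gamma\leq N(1+d).
\end{equation*}
Both Corollaries \ref{ClosedCurvatureLineFormula} and \ref{Chern-OssermanClosed} require $X$ to be non-totally geodesic. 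In that case $g$ is nonconstant, so in fact $d\geq 1$.

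\textbf{Left inequality.} After dividing by $2$, it reads $N\leq 1-\gamma+d$. This is exactly inequality \eqref{Chern-Osserman} of Corollary \ref{Chern-OssermanClosed}, so nothing more is needed.

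\textbf{Right inequality.} I will use Corollary \ref{ClosedCurvatureLineFormula} with $\cX(\Sigma)=2-2\gamma$. This gives $2-2\gamma=-2d+2N+r(g)$, and hence
\begin{equation*}
2+2d-2\gamma=2N+r(g).
\end{equation*}
Remark \ref{RamificationEstimate} gives $r(g)\leq N(d-1)$. Substituting, $2N+r(g)\leq 2N+N(d-1)=N(1+d)$, which is the desired bound.

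There is no real obstacle in this argument. The only point that needs care is the one already noted: Corollaries \ref{ClosedCurvatureLineFormula} and \ref{Chern-OssermanClosed} apply only to non-totally geodesic surfaces, so I will state this assumption explicitly, which also guarantees $d\geq 1$. The upper bound $r_{p_j}(g)\leq d-1$ behind Remark \ref{RamificationEstimate} holds because the multiplicity of $g$ at any point is at most $\deg(g)$.
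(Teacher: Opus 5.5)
Your proof is correct and follows exactly the route the paper indicates: the left inequality is the Chern--Osserman bound of Corollary \ref{Chern-OssermanClosed}, and the right one comes from the identity $2+2\deg(g)-2\gamma=2N+r(g)$ of Corollary \ref{ClosedCurvatureLineFormula} combined with $r(g)\leq N(\deg(g)-1)$ from Remark \ref{RamificationEstimate}. You are right to state the non-totally-geodesic assumption explicitly: for the plane, where every curve is a line of curvature and $\deg(g)=0$, $N=1$, $\gamma=0$, the right inequality would read $2\leq 1$.
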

\begin{rem}
\normalfont{
    In the particular case when $\deg(g)=1$, we have that $N\leq 2-\gamma\leq N$, which implies $N=2-\gamma$. Since $N\geq 1$, we must have that either $N=1$, $\gamma=1$ or $N=2$, $\gamma=0$. The first case is impossible because $\deg(g)=1$ implies that $g:\Sigma\to \bS^2$ is a diffeomorphism. The second case corresponds to the topology of the catenoid. In fact, since the only complete minimal surfaces of total curvature $-4\pi$ in $\bR^3$ are the Catenoid and Enneper's surface \cite{OssermanSurvey}, we see that the only surface with $\deg(g)=1$ and closed lines of curvature is the Catenoid.}
\end{rem}

\section{Non-orientable minimal surfaces with finite total curvature}\label{Section 4.4}

The study of non-orientable minimal surfaces started in 1876 with the discovery by L. Henneberg \cite{Henneberg} of a minimal Möbius band in $\bR^3$ with two branch points and total curvature $-2\pi$. We begin the discussion with a survey of the constructions and obstructions found in the literature:
\begin{itemize}
\item W. Meeks constructed the first complete non-orientable minimal immersion of a Möbius band of finite total curvature $-6\pi$ in the Euclidean space $\bR^3$ free of branch points \cite{Meeks}.
\item  M. Oliveira found a generalization of Meeks Möbius band with higher total curvature \cite{Oliveira}.
\item A. Barros found three one parameter families of Möbius bands with total curvature $-10\pi$ depending on the multiplicity of the Gauss map at the single end \cite{Barros}.
\item T. Ishihara described the moduli space of all the minimal Möbius bands \cite{Toru}.

\item M. Oliveira constructed two minimal projective spaces with two ends \cite{Oliveira}.
\item A. Barros generalized Oliveira's two-ended surface to higher total curvature \cite{Barros}.
\item S. Zhang constructed all the twice-punctured minimal projective spaces with total curvature $-10\pi$, $max\{mult_a(g),mult_b(g)\}=2$ and parallel ends  \cite{Zhang}.
\item E. Toubiana found a one parameter family of complete minimal twice-punctured projective spaces of total curvature $-10\pi$ interpolating between the Oliveira's two-ended Möbius bands \cite{Toubiana}.

\item M. Oliveira constructed a minimal three-punctured projective space with total curvature $-14\pi$  \cite{Oliveira}.
\item A. Barros generalized Oliveira's three-ended Möbius band to higher total curvature \cite{Barros}. 
\item R. Bryant found a minimal immersion of a three-punctured projective space with total curvature $-10\pi$ \cite{Bryant}.
\item R. Kusner discovered complete minimal projective spaces with an odd number of ends and finite total curvature \cite{Kusner}.
\item T. Ishihara proved that there does not exist a complete minimal immersion of finite total curvature in $\bR^3$ with the topology of a projective space and three embedded parallel ends \cite{Torunonexistence}.

\item S. Kato and K. Hamada constructed a $1-$parameter family of minimal projective spaces with $(n+1)$ ends of catenoidal type for any odd integer $n\geq 3$, and with total curvature corresponding to equality in Chen-Osserman's inequality \cite{Kato}. 
\item S. Kato and K. Hamada \cite{Kato} proved a non-existence theorem of minimal $n+1$-punctured projective spaces with catenoidal ends realizing the equality in the Chern-Osserman inequality and with $\bZ^n$ symmetry, for any even integer $n\geq 2$ \cite{Kato}.

\item  F. López discovered a minimal Klein bottle with a single end and total curvature $-8\pi$ \cite{Lopez}.
\item T. Ishihara has also constructed complete minimal Klein bottles in $\bR^3$ with higher total curvature \cite{ToruKleinBottles}.
\item F. Martin generalized the Lopez Klein bottle by constructing complete minimal non-orientable minimal surfaces with finite total curvature a single end and any genus \cite{MartinSurfaces}.

\item Further constructions were done by R. Kusner and N. Schmitt in the case of higher topological complexity \cite{KusnerSchmitt}.
\end{itemize}

Consider the antiholomorphic map $\cJ: \bS^2\to \bS^2$ and the canonical projection $\pi_0$
\begin{equation*}
\cJ(z)=-\frac{1}{\overline{z}},\quad \pi_0:\bS^2\to \frac{\bS^2}{\langle \cJ\rangle}\equiv \mathbb{RP}^2.
\end{equation*} 

The global version of the Weierstrass representation for non-orientable minimal immersions is the following:
 \begin{thm}[{Non-orientable Weierstrass Representation Theorem}\cite{Meeks},\cite{MartinThesis}]\label{globalWeierstrassNonOriented}
     Let $M$ be a Riemann surface and let $(g,\eta)$ be a Weierstrass data over $M$ satisfying the hypothesis of Theorem \ref{globalWeierstrassOriented}. Suppose that there exists an antiholomorphic involution $\tau:M\to M$ without fixed points satisfying
     \begin{equation}\label{transformationalpropertiesWeierstrass}
         g\circ \tau=\cJ\circ g,\quad \tau^{*}\eta=-\overline{g^2\eta}.
     \end{equation}
     Then the minimal immersion $X:M\to \bR^3$ of Theorem \ref{globalWeierstrassOriented} induces a non-orientable minimal immersion $X':M'\to \bR^3$ such that 
     \begin{equation*}
         X'\circ \pi=X,
     \end{equation*}
     where
     \begin{equation*}
         M'=M/\langle\tau\rangle,\quad \pi:M\to M' .
     \end{equation*}
 \end{thm}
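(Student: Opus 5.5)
The plan is to reduce the statement to Theorem \ref{globalWeierstrassOriented} applied on $M$, and then check that $X$ is constant on the orbits of $\tau$. The immersion $X:M\to\bR^3$ is well defined and conformal minimal by hypothesis. Since $\tau$ has no fixed points and is an antiholomorphic involution, the quotient $M'=M/\langle\tau\rangle$ is a smooth non-orientable surface. The projection $\pi$ is a two-sheeted covering, and $M'$ inherits a conformal structure because $\tau$ is anticonformal. It therefore suffices to prove $X\circ\tau=X$. Then $X$ descends to a map $X'$ with $X'\circ\pi=X$, and $X'$ is automatically a conformal minimal immersion, since these are local properties and $\pi$ is a local diffeomorphism.

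The key computation is $\tau^{*}\Phi=\overline{\Phi}$, componentwise. Using \eqref{transformationalpropertiesWeierstrass}, we have $g\circ\tau=-1/\overline{g}$ and $\tau^{*}\eta=-\overline{g^2\eta}$.
\begin{gather*}
\tau^{*}\phi_1=\tfrac12\Bigl(1-\tfrac{1}{\overline{g}^2}\Bigr)\bigl(-\overline{g^2\eta}\bigr)=\tfrac12\bigl(1-\overline{g}^2\bigr)\overline{\eta}=\overline{\phi_1},\\
\tau^{*}\phi_2=\tfrac{i}{2}\Bigl(1+\tfrac{1}{\overline{g}^2}\Bigr)\bigl(-\overline{g^2\eta}\bigr)=-\tfrac{i}{2}\bigl(1+\overline{g}^2\bigr)\overline{\eta}=\overline{\phi_2},\\
\tau^{*}\phi_3=-\tfrac{1}{\overline{g}}\bigl(-\overline{g^2\eta}\bigr)=\overline{g\eta}=\overline{\phi_3}.
\end{gather*}
It follows that $\mathrm{Re}(\tau^{*}\Phi)=\mathrm{Re}\,\Phi$. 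In other words, $d(X\circ\tau)=dX$ as real one-forms.

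Since $M$ is connected, $X\circ\tau-X$ equals a constant vector $c$. Fix the base point $P_0$. Then
\begin{equation*}
c=X(\tau(P_0))-X(P_0)=\mathrm{Re}\int_{P_0}^{\tau(P_0)}\Phi .
\end{equation*}
Applying $\tau$ twice gives $X=X\circ\tau^2=X+2c$, so $c=0$. This uses only that $\tau$ is an involution, and it is well defined because $\Phi$ has no real periods.

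The main point needing care is the path-independence behind $d(X\circ\tau)=dX$. We must ensure that the equality of real differentials integrates to an equality of maps without extra periods. This is handled by the no-real-periods hypothesis together with the involution trick above, which shows the constant $c$ vanishes.

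The remaining item is to check that the conformal structure and metric descend to $M'$. The conformal factor \eqref{conformal factor immersion} is $\tau$-invariant by the same computation. So the induced metric on $M'$ is the one pulled back by $X'$, and non-orientability of $M'$ follows from $\tau$ being orientation-reversing and fixed-point free.
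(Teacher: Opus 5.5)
Your argument is correct and is the standard one from the sources the paper cites (Meeks; Martin's thesis). The paper does not reprove this theorem, but it relies on exactly your key identity $\tau^{*}\Phi=\overline{\Phi}$ elsewhere, namely the computation $\tau^{*}dh=\overline{dh}$ in the proof of Theorem \ref{Nonexistencetheorem} and the hypothesis of Lemma \ref{simplifiedresidueproblem}. Your computations of $\tau^{*}\phi_k$ check out, and the involution trick $X=X\circ\tau^{2}=X+2c$ correctly kills the constant; your final paragraph is redundant, since $X\circ\tau=X$ already makes $X^{*}\langle\cdot,\cdot\rangle$ invariant under $\tau$.
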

 
Even though for non-orientable surfaces there does not exist a well defined continuous normal vector field on the whole surface, there is a well defined Gauss map $g:M\to \overline{\bC}$ defined on the orientable double cover $M$ of $M'$. Moreover by equation \eqref{transformationalpropertiesWeierstrass} there exists a well defined map $\mathfrak{g}:M'\to \mathbb{RP}^2$ which makes commutative the following diagram:
\begin{center}
\begin{tikzcd}

M \arrow[rr, "g"] \arrow[dd, "\pi"] &                                               & \overline{\mathbb{C}} \arrow[dd, "\pi_0"] \\
                                    & {} \arrow[loop, distance=2em, in=55, out=125] &                                           \\
M' \arrow[rr, "\mathfrak{g}"]                       &                                               & \mathbb{RP}^2                            
\end{tikzcd}
\end{center}
We will call $\mathfrak{g}$ the Gauss map of the non-orientable minimal surface $M'.$  Since $\deg(\pi)=\deg(\pi_0)=2$, it holds that $\deg(g)=\deg(\mathfrak{g})$.

F. Martin, in his doctoral thesis \cite{MartinThesis}, generalized the Jorge-Meeks formula for non-orientable minimal surfaces. We can also provide a generalization of Corollary \ref{ClosedCurvatureLineFormula} for the context of non-orientable minimal immersions.

\begin{prop}\label{BranchPointsComeInPairs}
   Suppose that $g:\Sigma\to \bS^2$ is the extended Gauss map of the double cover of a non-orientable minimal surface of finite total curvature with $\tau:\Sigma\to \Sigma$ the antiholomorphic involution. Then 
    \begin{equation*}
       \forall p\in \Sigma,\quad  r_p(g)=r_{\tau(p)}(g).
    \end{equation*}
\end{prop}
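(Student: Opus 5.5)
The plan is to compare local expressions of $g$ at $p$ and at $\tau(p)$, using the relation $g\circ\tau=\cJ\circ g$ from \eqref{transformationalpropertiesWeierstrass}, together with the compatible chart of Lemma \ref{Chartrelatedpoint}. Fix $p\in\Sigma$ and take a complex chart $(D_\epsilon,\phi)$ centered at $p$. By Lemma \ref{Chartrelatedpoint}, $\hat\phi=\tau\circ\phi\circ T$ is a chart centered at $\tau(p)$ compatible with the complex structure. Since $g$ and $\cJ$ both extend continuously to $\Sigma$ and $\bS^2$, the identity $g\circ\tau=\cJ\circ g$ extends from $M$ to all of $\Sigma$ by continuity. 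This matters because $p$ may be an end, and ends are permuted by $\tau$.

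Next I compute $g\circ\hat\phi(z)=g(\tau(\phi(\bar z)))=\cJ\bigl(g(\phi(\bar z))\bigr)$. Multiplicity is computed by composing with charts centered at the image points, so I choose a chart $\psi$ of $\bS^2$ centered at $g(p)$. The map $\cJ$ is an antiholomorphic diffeomorphism of $\bS^2$, so $\hat\psi\coloneqq\cJ\circ\psi\circ T$ is a holomorphic chart centered at $\cJ(g(p))=g(\tau(p))$. Then
\begin{equation*}
\hat\psi^{-1}\circ g\circ\hat\phi(z)=T\circ\psi^{-1}\circ\cJ^{-1}\circ\cJ\circ g\circ\phi(\bar z)=\overline{\psi^{-1}\circ g\circ\phi(\bar z)}.
\end{equation*}
If $\psi^{-1}\circ g\circ\phi(w)=w^nH(w)$ with $H(0)\neq0$, then the expression above equals $z^n\overline{H(\bar z)}$. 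Here $\overline{H(\bar z)}$ is holomorphic and nonzero at $0$. Hence $mult_{\tau(p)}(g)=mult_p(g)=n$, and subtracting $1$ gives $r_{\tau(p)}(g)=r_p(g)$.

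The step I expect to need the most care is checking that $\hat\psi$ really is a holomorphic chart. The map $\cJ\circ\psi\circ T$ is a composition of two antiholomorphic maps with a holomorphic one, so it is holomorphic, and it is a local diffeomorphism. It also needs to be centered correctly, which holds since $\hat\psi(0)=\cJ(\psi(0))$.

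As a sanity check I would also give an alternative argument through Lemma \ref{BranchPointCriteria}, since the order of $dg$ is preserved under the pullback by $\tau$ combined with conjugation. The chart computation above is cleaner, and I would present that one.
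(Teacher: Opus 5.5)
Your argument is correct, and it differs from the paper's in one substantive respect.

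The paper works in the fixed target coordinate $w$ on $\bC\cup\{\infty\}$ and uses the explicit formula $\cJ(w)=-1/\overline{w}$. It writes $g(\phi(z))=z^{ord_p(g)}H(z)$ and $g(\hat\phi(z))=z^{ord_{\tau(p)}(g)}\tilde H(z)$, deduces $ord_{\tau(p)}(g)=-ord_p(g)$, and concludes via $mult_p(g)=|ord_p(g)|$. That last identity only holds when $p$ is a zero or a pole of $g$. At a point with $g(p)\in\bC^*$ one has $ord_p(g)=0$ while $mult_p(g)\geq 1$. So the paper's argument as written covers only zeros and poles, which is what is used later at the ends after a rotation. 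Extending it to every $p$ needs an extra step, for instance rotating so that $g(p)=0$, using that rotations commute with the antipodal map $\cJ$.

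You instead conjugate the target chart by $\cJ$ as well, so that $\cJ$ cancels and $\hat\psi^{-1}\circ g\circ\hat\phi(z)=\overline{\psi^{-1}\circ g\circ\phi(\bar z)}$. What your route buys:
\begin{itemize}
\item it treats every point of $\Sigma$ uniformly;
\item it uses only that $\cJ$ is an antiholomorphic diffeomorphism, not its specific formula;
\item it makes explicit that $g\circ\tau=\cJ\circ g$ extends across the punctures by density, which the paper uses tacitly.
\end{itemize}

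What the paper's route buys is the signed statement $ord_{\tau(p)}(g)=-ord_p(g)$: zeros of $g$ correspond to poles of the same order. This is exploited in Theorem \ref{Nonexistencetheorem}. In your framework it follows at once from $g(\tau(p))=\cJ(g(p))$ together with the equality of multiplicities you prove.
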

\begin{proof}
    Consider a complex chart $\phi:D_{\epsilon}\to \Sigma$ centered at $p\in \Sigma$. Then by Lemma \ref{Chartrelatedpoint} we have the complex chart $\hat{\phi}= \tau\circ \phi \circ T: D_{\epsilon}\to \Sigma$ centered at $\tau(p)$ is compatible with the Riemann surface structure. By Lemma \ref{function in terms of order} there exist holomorphic functions $H,\Tilde{H}$ with $H(0),\Tilde{H}(0)\neq 0$ such that
    \begin{equation*}
        g(\phi(z))=z^{ord_p(g)}H(z),\quad g(\hat{\phi}(z))=z^{ord_{\tau(p)}(g)}\Tilde{H}(z).
    \end{equation*}
    Since $g\circ \tau=\cJ\circ g$ we must have that
    \begin{gather*}
        z^{ord_{\tau(p)}(g)}\Tilde{H}(z)=g(\hat{\phi}(z))=-\frac{1}{\overline{g(\phi(\overline{z})}}=-\frac{1}{z^{ord_p(g)}\overline{H(\overline{z})}}
    \end{gather*}
    so that
    \begin{equation*}
     z^{ord_p(g)}   z^{ord_{\tau(p)}(g)}=-\frac{1}{\overline{H(\overline{z})}\Tilde{H}(z)}.
    \end{equation*}
    Since the function on the right side does not vanish at $z=0$ we must have that $ord_p(g)=-ord_{\tau(p)}(g)$ so that
    \begin{equation*}
        r_p(g)=mult_p(g)-1=\abs{ord_p(g)}-1=\abs{ord_{\tau(p)}(g)}-1=mult_{\tau(p)}(g)-1= r_{\tau(p)}(g),
    \end{equation*}
    as we wanted to prove.
\end{proof}

We introduce the following definition:
\begin{defn}
\normalfont{
    Let $X':\Sigma'\setminus\{p_1,\ldots, p_N\}$ be a complete non-orientable minimal immersion of finite total curvature. Let $\Sigma$ be the orientable double cover of $\Sigma'$ with antiholomorphic involution $\tau:\Sigma\to \Sigma$ and associated orientable minimal immersion $X:\Sigma\setminus\{p_1,\ldots,p_N,\tau(p_1),\ldots, \tau(p_N)\}\to \bR^3$ with extended Gauss map $g:\Sigma\to \bS^2$. We define the total ramification order at the ends of the Gauss map $\mathfrak{g}:\Sigma\to \mathbb{RP}^2$ of $X'$ as
    \begin{equation*}
        r(\mathfrak{g})\coloneqq \frac{1}{2}r(g).
    \end{equation*}
    }
\end{defn}
\begin{rem}
\normalfont{
The total ramification order at the ends of $\mathfrak{g}$ is an integer. In fact by Definition \ref{totalramificationorderends} and Proposition \ref{BranchPointsComeInPairs} we have
    \begin{equation*}
       r(\mathfrak{g})\coloneqq \frac{1}{2}r(g)=\frac{1}{2}\left(\sum_{j=1}^Nr_{p_j}(g)+\sum_{j=1}^Nr_{\tau(p_j)}(g)\right)= \sum_{j=1}^Nr_{p_j}(g).
    \end{equation*}}
\end{rem}
\begin{rem}
\normalfont{
    Notice that according with Remark \ref{RamificationEstimate} applied to the case under consideration, we have $0\leq r(g)\leq 2N(\deg(g)-1)$ which implies $r(\mathfrak{g})\leq N(\deg(\mathfrak{g})-1)$.}
\end{rem}
\begin{cor}[{Non-orientable version of Corollary \ref{ClosedCurvatureLineFormula}}]\label{closedlineformulanonoriented}
 Let $X':\Sigma'\setminus\{p_1,\ldots,p_N\}\to \bR^3$ be a complete non-orientable minimal immersion of finite total curvature with closed curvature lines around the ends $p_j$. Then
    \begin{equation*}
    \cX(\Sigma')=-\deg(\mathfrak{g})+2N+ r(\mathfrak{g}).
    \end{equation*}
\end{cor}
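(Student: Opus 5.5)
We apply Corollary \ref{ClosedCurvatureLineFormula} to the orientable double cover and then descend to $\Sigma'$. Let $\Sigma$ be the orientable double cover of $\Sigma'$, with antiholomorphic involution $\tau$ without fixed points, and let $X:\Sigma\setminus\{p_1,\ldots,p_N,\tau(p_1),\ldots,\tau(p_N)\}\to\bR^3$ be the induced orientable immersion with extended Gauss map $g$.

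First we check the hypotheses of Corollary \ref{ClosedCurvatureLineFormula} for $X$. By the Remark following Definition \ref{classC}, the closed curvature line condition for $X'$ is, by definition, the condition for $X$. It is in any case automatic at the lifted ends: a closed curvature line of $X'$ around $p_j$ lifts to closed curvature lines around both $p_j$ and $\tau(p_j)$. The reason is that $\tau$ is an isometry of the induced metric and $X\circ\tau=X$, so it maps lines of curvature to lines of curvature. One can also argue via Corollary \ref{CriterioclassC}: the quadratic limits at $p_j$ and $\tau(p_j)$ are related through the chart $\hat\phi$ of Lemma \ref{Chartrelatedpoint}, and a nonzero real limit transforms to a nonzero real limit. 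The surface $X$ is not totally geodesic, since a plane is orientable and cannot arise in this way. Hence $X$ has $2N$ ends, and Corollary \ref{ClosedCurvatureLineFormula} gives
\begin{equation*}
\cX(\Sigma)=-2\deg(g)+2(2N)+r(g).
\end{equation*}

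Next we descend to $\Sigma'$. Since $\pi:\Sigma\to\Sigma'$ is an unbranched double cover, $\cX(\Sigma)=2\cX(\Sigma')$. By the diagram relating $g$ and $\mathfrak{g}$ we have $\deg(g)=\deg(\mathfrak{g})$, and by definition $r(g)=2r(\mathfrak{g})$. Substituting these,
\begin{equation*}
2\cX(\Sigma')=-2\deg(\mathfrak{g})+4N+2r(\mathfrak{g}),
\end{equation*}
and dividing by $2$ gives the claimed formula.

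The only delicate point is the first step, namely confirming that the ends of the double cover are exactly $2N$ distinct punctures, all of them in $\cC$. Distinctness holds because $\tau$ has no fixed points, and the previous paragraph handles membership in $\cC$. Everything else is bookkeeping.
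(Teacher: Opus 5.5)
Your proposal is correct and is essentially the paper's proof: apply Corollary \ref{ClosedCurvatureLineFormula} to the orientable double cover with its $2N$ ends, then substitute $\cX(\Sigma)=2\cX(\Sigma')$, $\deg(g)=\deg(\mathfrak{g})$ and $r(g)=2r(\mathfrak{g})$. The extra hypothesis checks you include (the $2N$ ends are distinct, there are closed lines at both $p_j$ and $\tau(p_j)$, and $X$ is not flat) are sound and fill in what the paper leaves implicit. Note only that $\tau$, being orientation-reversing, exchanges the maximal and minimal foliations (cf.\ Corollary \ref{LinesCurvaturerelatedpoints}); this is harmless, since Definition \ref{classC} allows either foliation.
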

\begin{proof}
    In fact consider the double surface $X:\Sigma\setminus\{q_1,\ldots,q_N,\tau(q_1),\ldots \tau(q_N)\}\to \bR^3$ with $2N$ ends and closed curvature lines. By Corollary \ref{ClosedCurvatureLineFormula} we have
    \begin{equation*}
        \cX(\Sigma)=-2\deg(g)+2(2N)+r(g),
    \end{equation*}
    so that
    \begin{equation*}
        2\cX(\Sigma')=-2\deg(\mathfrak{g})+4N+2r(\mathfrak{g}).
    \end{equation*}
    and the proof is completed.
\end{proof}

\begin{cor}[{Non-orientable version of Corollary \ref{Chern-OssermanClosed}}]
      Let $X':\Sigma'\setminus\{p_1,\ldots,p_N\}\to \bR^3$ be a complete non-orientable minimal immersion of finite total curvature with closed curvature lines around the ends $p_j$. Denote by $\gamma$ the genus of the oriented double cover of $\Sigma'$. Then we have the inequality
    \begin{equation*}
1-\gamma+\deg(\mathfrak{g})\geq 2N.
    \end{equation*}
    Moreover the equality holds if and only if each end is catenoidal.
\end{cor}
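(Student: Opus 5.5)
The plan is to reduce everything to the identity of Corollary \ref{closedlineformulanonoriented} together with the relation between the Euler characteristics of $\Sigma'$ and of its oriented double cover $\Sigma$. Since $\pi:\Sigma\to\Sigma'$ is an unbranched two-sheeted covering of closed surfaces, we have $\cX(\Sigma)=2\cX(\Sigma')$. As $\Sigma$ is orientable of genus $\gamma$, this gives
\begin{equation*}
\cX(\Sigma')=\tfrac{1}{2}(2-2\gamma)=1-\gamma .
\end{equation*}
Substituting this into Corollary \ref{closedlineformulanonoriented} yields the identity
\begin{equation*}
1-\gamma+\deg(\mathfrak{g})=2N+r(\mathfrak{g}).
\end{equation*}
Since $r(\mathfrak{g})=\sum_{j=1}^N r_{p_j}(g)\geq 0$, the inequality $1-\gamma+\deg(\mathfrak{g})\geq 2N$ follows immediately.

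For the equality case, equality holds if and only if $r(\mathfrak{g})=0$. This happens if and only if $r_{p_j}(g)=0$ for every $j$, and then by Proposition \ref{BranchPointsComeInPairs} also $r_{\tau(p_j)}(g)=0$. So the Gauss map $g$ of the double cover is unramified at all $2N$ ends of $X$.

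Next I use the closed-curvature-line hypothesis, which by definition is imposed on the double cover $X$. Corollary \ref{CriterioclassC} then gives $ord_{q}Q_H=-2$ at every end $q\in\{p_j,\tau(p_j)\}$. Theorem \ref{JorgeMeeksHopf} gives $\nu_q=r_q(g)-ord_qQ_H-1=r_q(g)+1$. Hence $r_q(g)=0$ if and only if $\nu_q=1$, and exactly as in the proof of Corollary \ref{Chern-OssermanClosed} this means the end is embedded. By Theorem \ref{PlanarEndsDoNotAdmitClosedLines}, an embedded end carrying closed curvature lines cannot be planar, so it is catenoidal.

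Conversely, if each end is (embedded) catenoidal, then $\nu_q=1$ at each end, so $r_q(g)=0$ by the same formula, hence $r(\mathfrak{g})=0$ and equality holds. An end of $X'$ at $p_j$ is catenoidal exactly when the corresponding ends $p_j$ and $\tau(p_j)$ of $X$ are, since $X=X'\circ\pi$. Therefore the statement about ends of $X'$ and about ends of $X$ are interchangeable.

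The only delicate point I anticipate is justifying that "catenoidal" is understood as embedded with nonzero flux, so that the equivalence $\nu_q=1\Leftrightarrow$ embedded end (quoted from \cite[Proposition 4.3.13]{CarlosThesis}) applies in both directions. Everything else is bookkeeping.
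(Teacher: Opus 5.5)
Your proposal is correct and follows essentially the argument the paper intends. The paper states this corollary without proof, as the direct analogue of Corollary \ref{Chern-OssermanClosed}: insert $\cX(\Sigma')=1-\gamma$ into Corollary \ref{closedlineformulanonoriented}, use $r(\mathfrak{g})\geq 0$, and settle the equality case with Theorem \ref{JorgeMeeksHopf}, the fact that $\nu=1$ characterizes embedded ends, and Theorem \ref{PlanarEndsDoNotAdmitClosedLines}. You also prove the converse direction, which the paper's orientable proof leaves implicit. The one hypothesis you leave unchecked, that $X$ is not totally geodesic, holds automatically, since $g\circ\tau=\cJ\circ g$ rules out a constant Gauss map.
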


We now provide a generalization of the transformational property \cite[Theorem 4.5]{Carlos} of the Hopf differential for arbitrary genus and an application to the dynamics of the lines of curvature associated to the double surface of a non-orientable minimal immersion.
\begin{prop}[{Transformational law of the Hopf differential}]\label{transformationHopfgeneralized}
    Let $X':M'\to \bR^3$ be a complete non-orientable minimal surface with finite total curvature and $X:M\to \bR^3$ the associated minimal immersion on the orientable double cover. Let $\tau:M\to M$ be the antiholomorphic involution such that $M'=M/\langle \tau \rangle$. Then the Hopf differential $Q_{H}$ of $X$ satisfies the transformational law
    \begin{equation*}
        \tau^{*}Q_{H}=-\overline{Q}_H.
    \end{equation*}
\end{prop}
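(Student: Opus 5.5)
The plan is to write the Hopf differential in Weierstrass terms and push the two transformation rules \eqref{transformationalpropertiesWeierstrass} through it. With data $(g,\eta)$ the height differential is $dh=\phi_3=g\eta$, and in a local chart the Hopf differential is, up to a universal nonzero constant $c$ whose value (real, e.g.\ $\pm\tfrac12$) depends only on normalization,
\begin{equation*}
Q_H=c\,\frac{dg}{g}\,dh = c\, dg\,\eta .
\end{equation*}
This is consistent with the proofs of Theorem \ref{JorgeMeeksHopf} and Theorem \ref{PlanarEndsDoNotAdmitClosedLines}, where $ord\,Q_H$ is computed from the orders of $\frac{dg}{g}$ and $dh$. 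So the statement reduces to computing $\tau^{*}(dg)$ and $\tau^{*}\eta$.

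The first step is to differentiate $g\circ\tau=\cJ\circ g=-1/\overline{g}$. Since $\tau$ is antiholomorphic, $\tau^{*}dg=d(g\circ\tau)$, which gives
\begin{equation*}
\tau^{*}dg = d\!\left(-\frac{1}{\overline g}\right)=\frac{\overline{dg}}{\overline g^{\,2}}.
\end{equation*}
The second rule gives $\tau^{*}\eta=-\overline{g^2\eta}$ directly. Pullback of a symmetric product is the product of pullbacks, so
\begin{equation*}
\tau^{*}Q_H = c\,\tau^{*}(dg)\,\tau^{*}\eta = c\,\frac{\overline{dg}}{\overline g^{\,2}}\cdot\left(-\overline{g}^{\,2}\,\overline{\eta}\right) = -\,\overline{c\,dg\,\eta}=-\overline{Q}_H ,
\end{equation*}
using that $c$ is real. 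This identity holds on the open dense set where $g\neq0,\infty$ and $\eta$ is regular. Both sides are (anti)meromorphic quadratic differentials, and by Proposition \ref{Hopf free poles interior} they extend meromorphically to $\Sigma$, so the identity holds everywhere by continuity and the identity principle.

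The main point of care, more than a real obstacle, is the local computation justifying $\tau^{*}(f\,dz^2)$ in the charts of Lemma \ref{Chartrelatedpoint}. If $Q_H=\phi(z)dz^2$ in a chart $\phi$ centered at $p$, and $\hat\phi=\tau\circ\phi\circ T$ is the chart at $\tau(p)$, one must check that the formula gives $\hat\phi^{*}Q_H(z)=-\overline{\phi(\overline z)}\,dz^2$. This is the local form used in the genus-zero case \cite[Theorem 4.5]{Carlos}. The check amounts to verifying that the conjugations introduced by $T$ cancel correctly, and confirming that the normalization constant $c$ is real, since an imaginary constant would flip the sign. An alternative route avoids $c$ entirely: use the geometric definition $Q_H=\langle X_{zz},N\rangle dz^2$. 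Then $X\circ\tau=X$, $N\circ\tau=-N$ (the deck map reverses orientation), and $\tau$ is antiholomorphic, so $\tau^{*}\langle X_{zz},N\rangle dz^2=\langle \overline{X_{zz}},-N\rangle d\bar z^2=-\overline{Q_H}$. I would present this geometric argument as a cross-check of the sign.
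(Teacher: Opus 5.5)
Your proposal is correct and follows the paper's proof. The paper writes $Q_H=-\tfrac12\,dg\otimes\eta$ (your real constant with $c=-\tfrac12$) and pushes $g\circ\tau=\cJ\circ g$ and $\tau^{*}\eta=-\overline{g^2\eta}$ through it exactly as you do; your geometric cross-check via $X\circ\tau=X$, $N\circ\tau=-N$ is also valid, but the statement does not need it.
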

\begin{proof}
    In fact, let $(g,\eta)$ be the Weierstrass data defined on $M$. Recall that
    \begin{equation*}
        Q_H=-\frac{1}{2}dg\otimes\eta.
    \end{equation*}
    Then according with the transformational properties of \eqref{transformationalpropertiesWeierstrass}, we calculate
    \begin{gather*}
    \begin{split}
        \tau^{*}Q_H&=\tau^{*}\left(-\frac{1}{2}dg\otimes \eta\right)=-\frac{1}{2}d(g\circ \tau)\otimes (\tau^{*}\eta)=-\frac{1}{2}d\left(-\frac{1}{\overline{g}}\right)\otimes \left(-\overline{g^2\eta}\right)\\&=
         \tau^{*}Q_H=-\frac{1}{2}\frac{d\overline{g}}{\overline{g}^2}\otimes \left(-\overline{g^2\eta}\right)=\frac{1}{2}d\overline{g}\otimes \overline{\eta}=-\overline{Q}_H.
         \end{split}
    \end{gather*}
    as we wanted to prove.
\end{proof}
\begin{cor}\label{LinesCurvaturerelatedpoints}
     Let $X':\Sigma'\setminus\{p_1.\ldots, p_N\}\to \bR^3$ be a complete non-orientable minimal surface with finite total curvature, and let $X:\Sigma\setminus \{p_1,\ldots p_N,\tau(p_1),\ldots \tau(p_N)\}\to \bR^3$ be the associated minimal immersion on the orientable double cover, where $\tau:\Sigma\to \Sigma$ is the antiholomorphic involution such that $\Sigma'=\Sigma/{\langle \tau \rangle}$. Suppose that the maximal principal foliation $\cF_{X}$ (respectively the minimal principal foliation $f_X$) has a closed curvature line around the end $p_j$. Then $\cF_X$ (respectively $f_X$) has a radial curvature line around the end $\tau(p_j).$
\end{cor}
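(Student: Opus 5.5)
Fix the end $p_j$ and suppose the maximal foliation $\cF_X$ has a closed curvature line in a small punctured neighbourhood of $p_j$. I will transport the local picture at $p_j$ to $\tau(p_j)$ using the transformational law of Proposition \ref{transformationHopfgeneralized}, and then read off the foliation near $\tau(p_j)$ from Theorem \ref{dynamicslinescurvature}.

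The first step is to extract the local data at $p_j$. By Corollary \ref{CriterioclassC}, or directly from Theorem \ref{dynamicslinescurvature}, a closed line of $\cF_X$ near $p_j$ forces the associated complex function to have a pole of order two at $p_j$. Its quadratic limit $a+ib$ is then real and nonzero. Since items (a), (b.2) and (c) give no closed lines, and in (b.1) the circles belong to $\cF_X$ exactly when $a<0$, we get $b=0$ and $a<0$. Take an isothermal chart $\phi:D_\epsilon\to\Sigma$ centered at $p_j$ and write $\phi^{*}Q_H=\varphi(z)\,dz^2$, so that $z^2\varphi(z)\to a$.

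The second step is to compute the Hopf differential near $\tau(p_j)$. By Lemma \ref{Chartrelatedpoint}, $\hat\phi=\tau\circ\phi\circ T$ is a compatible chart centered at $\tau(p_j)$. Pulling back and using $\tau^{*}Q_H=-\overline{Q_H}$ gives
\begin{equation*}
\hat\phi^{*}Q_H=T^{*}\phi^{*}\tau^{*}Q_H=-T^{*}\overline{\phi^{*}Q_H}=-T^{*}\bigl(\overline{\varphi(z)}\,d\bar z^{2}\bigr)=-\overline{\varphi(\bar z)}\,dz^{2}.
\end{equation*}
So the associated function at $\tau(p_j)$ is $\hat\varphi(z)=-\overline{\varphi(\bar z)}$. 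It again has a pole of order two, and its quadratic limit is
\begin{equation*}
\lim_{z\to0}z^2\hat\varphi(z)=-\overline{\lim_{w\to 0}w^2\varphi(w)}=-a>0.
\end{equation*}
Here $w=\bar z$, so $z^2=\overline{w^2}$.

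The third step is to read off the conclusion. By Theorem \ref{dynamicslinescurvature} (b.1) with positive real limit, the lines of $\cF_X$ near $\tau(p_j)$ are rays tending to $\tau(p_j)$, so $\cF_X$ has a radial curvature line there. The case of $f_X$ is symmetric: a closed line of $f_X$ at $p_j$ forces $a>0$, the limit at $\tau(p_j)$ is then $-a<0$, and (b.1) makes the lines of $f_X$ rays there.

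The main obstacle is getting the pullback correct, in particular the sign and the conjugation. One should check that $T^{*}(d\bar z)=dz$ and that the order of the pole is preserved. One must also make sure the correspondence "$a<0$ gives circles for $\cF_X$" is applied in a chart compatible with the orientation. Lemma \ref{Chartrelatedpoint} guarantees that $\hat\phi$ is holomorphic-compatible, and by the remark after Definition \ref{quadraticlimit} the quadratic limit is chart-independent, so the sign flip $a\mapsto -a$ is intrinsic.
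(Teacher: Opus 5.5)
Your proof is correct and follows essentially the same route as the paper. You use the compatible chart $\tau\circ\phi\circ T$ at $\tau(p_j)$ and the law $\tau^{*}Q_H=-\overline{Q_H}$ to get $\hat\varphi(z)=-\overline{\varphi(\bar z)}$, which flips the sign of the real quadratic limit, and then you read off radial lines from Theorem \ref{dynamicslinescurvature} (b.1); your explicit treatment of the $f_X$ case and of why a closed line of $\cF_X$ forces $a<0$ just spells out details the paper leaves implicit.
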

\begin{proof}
    Consider an isothermal atlas $(U_{\alpha},\psi_{\alpha})$ of $\Sigma$, where $\psi_{\alpha}:U_\alpha\subset \bC\to \Sigma$. Then the Hopf differential $Q_H$ satisfies by definition
    \begin{equation*}
        \psi_\alpha^{*}Q_H=\phi_\alpha(z) dz \otimes dz,\quad \phi_{\alpha}(z)\coloneq II(\p_z\psi_\alpha,\p_z\psi_\alpha).
    \end{equation*}
    Fix an end $p_j$ and let $\psi_\alpha$ be a chart centered at that point. Then according with Lemma \ref{Chartrelatedpoint} a compatible complex chart around $\tau(p_j)$ is $\hat{\psi}_\alpha\coloneqq \tau\circ \psi_\alpha \circ T$ where $T:U_\alpha\to U_\alpha$ is the conjugation map. Let us denote by $(U_\beta,\psi_\beta)$ this complex chart where $U_\beta=U_\alpha$ and $\psi_\beta=\hat{\psi}_\alpha$. Suppose by hypothesis that the maximal principal foliation $\cF_X$ has a closed curvature line around $p_j$. Therefore by Corollary \ref{CriterioclassC} this implies that $\phi_\alpha(z)$ has a pole of order two at $0$ with negative quadratic limit
    \begin{equation}\label{complexassociatefunction}
        \phi_\alpha(z)=\frac{a_{\alpha}}{z^2}+\frac{b_\alpha}{z}+\text{holomorphic function},\quad  a_{\alpha}\in \bR^{-}.
    \end{equation}
    On the other hand we have
    \begin{equation*}
\psi_\beta^{*}Q_H=\phi_\beta(z)dz\otimes dz.
    \end{equation*}
By Proposition \ref{transformationHopfgeneralized} we find that
\begin{gather*}
\psi_\beta^{*}Q_H=T^{*}\left(\psi_\alpha^{*}\left(\tau^{*}Q_H\right)\right)=T^{*}\left(\overline{\psi_\alpha^{*}\left(\overline{\tau^{*}Q_H}\right)}\right)=-T^{*}\left(\overline{\psi_\alpha^{*}Q_H}\right),
\end{gather*}
which implies that
\begin{equation*}
    \phi_{\beta}(z)dz\otimes dz=-T^{*}\left(\overline{\phi_\alpha(z)dz\otimes dz}\right)=-\left(\overline{\phi_{\alpha}(z)}\circ T\right) T^{*}\overline{dz}\otimes T^{*}\overline{dz}=-\overline{\phi_\alpha(\overline{z})}dz\otimes dz.
\end{equation*}
Therefore
\begin{equation*}
    \phi_\beta(z)=-\overline{\phi_\alpha(\overline{z})}.
\end{equation*}
which implies by equation \eqref{complexassociatefunction} that $\phi_\beta(z)$ has a pole of order two at $z=0$ and moreover 
\begin{equation*}
    a_\beta=-\overline{a}_{\alpha}\in \bR^{+}.
\end{equation*}
As a consequence, by Theorem \ref{dynamicslinescurvature}, the maximal principal foliation $\cF_X$ has a radial behavior at $\tau(p_j)$.
\end{proof}

\subsection{Single-ended minimal surfaces with a closed curvature line}

While looking for a model of a non-orientable free boundary minimal immersion in the unit ball $\bB^3$, we investigated the question of whether or not there exists a complete non-orientable minimal surface of finite total curvature in $\bR^3$ with a closed curvature line around its end. We next prove Theorem \ref{nonexistenceoneendanygenus}:
\begin{thm}\label{Nonexistencetheorem}
    There is no complete single-ended minimal surface with finite total curvature in $\bR^3$ which contains a closed curvature line around its end.
\end{thm}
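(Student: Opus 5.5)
We split into the orientable and the non-orientable case.

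\textbf{Orientable case.} Suppose $X:\Sigma\setminus\{p\}\to\bR^3$ is complete, of finite total curvature, with a single end $p$ and a closed curvature line around $p$. Since $X$ is not a plane (a plane has no closed curvature line, being totally umbilic with $Q_H\equiv 0$), Corollary \ref{CriterioclassC} says $Q_H$ has a pole of order exactly two at $p$, with nonzero real quadratic limit.

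The key point is a residue argument. On the closed surface $\Sigma$ the form $dg/g$ is meromorphic, and $Q_H=-\frac12 dg\otimes\eta$. Rotate so that $g(p)=0$. Then $dg/g$ has a simple pole at $p$ with residue $\mathrm{mult}_p(g)>0$ (Proposition \ref{dichotomy}).

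As in the proof of Theorem \ref{PlanarEndsDoNotAdmitClosedLines}, the order-two pole with real quadratic limit forces the height differential $dh=g\eta$ to have a simple pole at $p$ with nonzero residue. Indeed, $Q_H=-\frac12 \frac{dg}{g}\otimes dh$, and $\frac{dg}{g}$ has a simple pole, so $dh$ must also have a simple pole. But $dh$ is holomorphic on $\Sigma\setminus\{p\}$, because the zeros of $\eta$ cancel the poles of $g$. By the residue theorem its only residue vanishes, which is a contradiction.

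Alternatively, one can invoke Corollary \ref{ClosedCurvatureLineFormula} together with the Chern--Osserman equality case. With $N=1$, however, the residue argument is cleaner: the flux of a single end is zero.

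\textbf{Non-orientable case.} Here the double cover has two ends, $p$ and $\tau(p)$. By Corollary \ref{LinesCurvaturerelatedpoints}, the same foliation is closed at $p$ and radial at $\tau(p)$. The statement only requires one closed line around the end of $\Sigma'$, which lifts to a closed line near $p$. Corollary \ref{CriterioclassC}, applied at $p$ together with the transformation law, gives a pole of order two at both $p$ and $\tau(p)$ with real quadratic limits $a$ and $-a$.

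Rotate so that $g(p)=0$. Then $g(\tau(p))=\infty$, since $g\circ\tau=\cJ\circ g$. At $p$, the same argument as above gives $\mathrm{Res}_p\,dh=c\neq 0$. Using $\tau^*dh=\tau^*(g\eta)=-\overline{g^{-1}}\cdot\overline{g^2\eta}\cdot(-1)$, we get $\tau^*dh=\overline{dh}$. Lemma \ref{Residuerelatedpoints} then yields $\mathrm{Res}_{\tau(p)}dh=\overline{c}$.

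Since $dh$ is holomorphic away from $p$ and $\tau(p)$, the residue theorem gives $c+\overline c=0$. So $c$ is purely imaginary and nonzero.

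Now use the real-period condition: $\mathrm{Re}\oint dh=0$ around $p$. This gives $\mathrm{Re}(2\pi i c)=0$, hence $c$ is real. Combined with $c$ being purely imaginary, we get $c=0$, which is a contradiction.

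\textbf{Main obstacle.} The hardest step is to justify rigorously that a real quadratic limit forces the residue $c$ of $dh$ to be nonzero. We have $z^2\phi\to -\frac12\,\mathrm{mult}_p(g)\cdot c$ in the chart normalization, which is nonzero. Reality is not even needed; we only need the order-two pole of $Q_H$, together with the simple pole of $dg/g$.

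We must also check carefully that the sign in $\tau^*dh=\overline{dh}$ is correct, since an error there would flip the conclusion. The fallback is to rederive this sign from $X\circ\tau=X$: differentiating gives $\tau^*\Phi=\overline\Phi$ for the third coordinate, which confirms it.
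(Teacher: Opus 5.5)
Your proof is correct and is essentially the paper's argument. The order-two pole of $Q_H=-\frac{1}{2}\frac{dg}{g}\otimes dh$ at an end where $g$ is $0$ or $\infty$ forces $dh$ to have a simple pole there; $\tau^{*}dh=\overline{dh}$ together with Lemma \ref{Residuerelatedpoints} gives $\mathrm{Res}_{\tau(p)}dh=\overline{\mathrm{Res}_{p}dh}$; and the residue theorem yields the contradiction (directly, in the orientable case). The one variation is that you get $\mathrm{Res}_p dh\in\bR$ from the vanishing real period around the end, whereas the paper gets it from the reality of the quadratic limit, so your version makes explicit that only the order-two pole is really used. The aside about the Chern--Osserman equality case is unnecessary and not justified as stated, so drop it.
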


\begin{proof}
We provide the proof in the non-orientable case, because the orientable is simpler and follows the same strategy. Suppose, by contradiction, that there exists a complete non-orientable minimal immersion,
     \begin{equation*}
         X':\Sigma'\setminus \{P\}\to \bR^3,
     \end{equation*}
     of finite total curvature and a closed curvature line around $P$. Consider the oriented double cover $\Sigma$ of $\Sigma'$ with canonical projection map $\pi:\Sigma'\to \Sigma$ and antiholomorphic involution map $\tau:\Sigma\to \Sigma$. We then have by composition an orientable minimal immersion 
     \begin{equation*}
         X\coloneqq X' \circ \pi:\Sigma\setminus \{Q_1,Q_2\}\to \bR^3,
     \end{equation*}
     with finite total curvature and closed curvature lines around both ends. After rotating the surface, we can assume by equation \eqref{transformationalpropertiesWeierstrass} that the Gauss map of $X$ has a zero at the end $Q_1 \in \Sigma$ and a pole at the other end $Q_2=\tau(Q_1) \in  \Sigma$ with the same multiplicity. Then the Hopf differential can be written as \begin{equation*}
    Q_H=-\frac{1}{2}\frac{dg}{g}\otimes dh,\quad dh=g\eta,
\end{equation*}
where $(g,\eta)$ is the associated Weierstrass data of the minimal immersion $X$. Notice, that in a local chart $(U_j,\psi_j)$ around each puncture $Q_j,\ j=1,2$, there exist integers $N_1=-N_2$ and holomorphic functions $H_j$ with $H_j(0)\neq 0$ so that
\begin{equation*}
    g\circ \psi_j(z)=z^{N_j}H_j(z),
\end{equation*}
and in any case we see that the one-form $\frac{dg}{g}$ has a pole of order one at each end $Q_1,Q_2$,
\begin{gather*}
    \frac{d(g\circ \psi_j)}{(g\circ \psi_j)}(z)=\left(\frac{N_j}{z}+\Tilde{H}_j(z)\right)dz,
\end{gather*}
with $\Tilde{H}_1,\Tilde{H}_2$ holomorphic. Since we have closed curvature lines around each end, we must have, by Corollary \ref{CriterioclassC}, that the Hopf differential has a pole of order exactly two in a local chart $z=0$ at each end,  with
\begin{equation*}
    \lim_{z\to 0}z^2\phi(z)\in \bR^*.
\end{equation*}
Therefore $dh$ must have a simple pole with non-zero real residues $a_1,a_2\in \bR^{*}$ at both $Q_1,Q_2$ respectively. We next study the transformational property of the height differential through the antiholomorphic involution $\tau$ on $\Sigma$:
\begin{gather}\label{transformationheightdiferential}
    \tau^{*}dh=\left(g\circ \tau\right)\left(\tau^{*}\eta\right)=\left(-\frac{1}{\overline{g}}\right)\left(-\overline{g^2\eta}\right)=\overline{g\eta}=\overline{dh}.
\end{gather}
By Lemma \ref{Residuerelatedpoints} we have that
\begin{equation*}
\eval{Res}_{Q_1}\overline{\tau^{*}dh}=\overline{\eval{Res}_{Q_2}dh}
\end{equation*}
so using equation \eqref{transformationheightdiferential} we obtain
\begin{equation*}
\eval{Res}_{Q_1}dh=\overline{\eval{Res}_{Q_2}dh}.
\end{equation*}
This implies that
\begin{gather*}
    a_1=\eval{Res}_{Q_1}dh=\overline{\eval{Res}_{Q_2}dh}=\overline{a_2}=a_2.
\end{gather*}
Using the residue Theorem \cite{RiemannSurfacesGirondo} we obtain
\begin{equation*}
    0=\sum_{Q\in \Sigma} Res_{Q}dh=a_1+a_2=2a_1
\end{equation*}
implying that $a_1=a_2=0$, which is a contradiction.     
\end{proof}

Therefore, the easiest topology to look for a non-orientable minimal surface with closed curvature lines around the ends is a projective space punctured at two or more points. In the last three sections, we describe the partial results we obtained when analyzing those surfaces.
\subsection{Generalities of non-orientable minimal punctured projective spaces}

Oliveira established the general formula for the Gauss map of a non-orientable minimal $N$-punctured projective space. Later on Barros and Zhang, working independently, established the formula for the one-form $\eta$ of the Weierstrass data associated to a twice-punctured projective space. The purpose of the next theorem is to state an equivalent formulation, and generalize their formulas to any number of punctures.
\begin{thm}[{Oliveira\cite{Oliveira}, Barros \cite{Barros}, Zhang \cite{Zhang}}]\label{Puncturedprojectivespaces}
Let $X':\mathbb{RP}^2\setminus\{p_1,\ldots, p_N\}\to \bR^3$ be a complete minimal immersion of finite total curvature $-2\pi m$. Let $(g,\eta)$ be the Weierstrass data associated to the orientable double cover surface. Then up to equivalence the pair $(g,\eta)$ is of the form
     \begin{equation*}
        g(z)=z^{k_0}\prod_{j=1}^{N-1}\frac{(\overline{a_j}z+1)^{k_j}}{(z-a_j)^{k_j}}\prod_{j=1}^{l}\frac{(\overline{b_j}z+1)}{(z-b_j)},
    \end{equation*}
    \begin{equation}\label{oneformpunctureprojectivespace}
        \eta=\frac{i}{z^{n_0}}\prod_{j=1}^{N-1}\frac{(z-a_j)^{2k_j}}{(z-a_j)^{n_j}(\overline{a_j}z+1)^{n_j}}\prod_{j=1}^{l}(z-b_j)^2dz,
    \end{equation}
    where
 \begin{equation*}
     m\geq3\ \text{odd},\ k_0, k_1,\ldots k_{N-1}\geq 0,\ n_0,\ldots n_{N-1}\geq 2,\ l\geq 0
 \end{equation*}
 are integers satisfying
 \begin{equation*}
     m+1=n_0+n_1+\ldots n_{N-1},\quad l=m-k_0-k_1-\ldots -k_{N-1}
 \end{equation*}
 and
 \begin{equation*}
   a_0=0,\ a_1\ldots a_{N-1}\in \bC,\ a_j\neq a_k,\  a_j\overline{a_k}\neq -1\quad b_j\in \bC,\  b_j\neq 0,\infty,a_k,-\frac{1}{\overline{a_k}},\quad b_j\overline{b_k}\neq -1.
 \end{equation*}
 Conversely, if the Weierstrass data $(g,\eta)$ defined in this way is free of real periods, then through the non-orientable Weierstrass representation it constructs a complete minimal immersion $$X':\mathbb{RP}^2\setminus\{[0],\ldots [a_{N-1}]\}\to \bR^3$$ of finite total curvature $-2\pi m$.
\end{thm}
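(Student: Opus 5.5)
We work on the orientable double cover. Since $\mathbb{RP}^2$ is non-orientable with Euler characteristic $1$, its orientable double cover is the sphere $\Sigma=\overline{\bC}$. The deck involution is an antiholomorphic fixed-point-free involution of $\overline{\bC}$, so after a Möbius change of coordinates we may take $\tau=\cJ$, $\cJ(z)=-1/\overline{z}$. The $N$ punctures of $\mathbb{RP}^2$ lift to $2N$ points $a_j,\ -1/\overline{a_j}$. A rotation in $\bR^3$ acts on $g$ by an element of $SU(2)$ commuting with $\cJ$, and $SU(2)$ acts transitively on $\overline{\bC}$ compatibly with $\cJ$. We use this freedom to put one end at $a_0=0$, so its partner is $\infty$; this is the normalization ``up to equivalence''.

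The first step is the Gauss map. $g$ is a rational function of degree $m'$, where the total curvature of $X'$ is $-2\pi m$, so the double cover has total curvature $-4\pi\deg g$. This gives $\deg g=m$, using $\deg(\mathfrak{g})=\deg(g)$ as noted before Proposition \ref{BranchPointsComeInPairs}. By $g\circ\tau=\cJ\circ g$, the zeros and poles of $g$ come in $\cJ$-related pairs: if $g$ has a zero of order $k$ at $c$, then it has a pole of order $k$ at $-1/\overline{c}$. This is exactly the computation in Proposition \ref{BranchPointsComeInPairs}, giving $ord_p(g)=-ord_{\tau(p)}(g)$. We split the zeros of $g$ into those located at the ends, giving $z^{k_0}$ and the factors $(\overline{a_j}z+1)^{k_j}$, and the remaining simple or repeated zeros at interior points $-1/\overline{b_j}$, listed with multiplicity. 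Writing $g$ as zeros over poles and fixing the unimodular constant by a rotation about the vertical axis (this also absorbs the sign coming from $\cJ$) yields the stated product with $l=m-\sum k_j$. The conditions on $b_j$ say that the interior zeros avoid the ends and that paired points are distinct.

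The second step is $\eta$. By Remark \ref{equivalent immersion condition}, in $M$ the zeros of $\eta$ are exactly at the poles of $g$, with double order; the finite poles of $g$ in $M$ are the $b_j$, which gives the factor $\prod(z-b_j)^2$. At the ends $\eta$ may have poles. Write the pole order of $\eta$ at $a_j$ in the form $n_j-2k_j$, so that the factor $(z-a_j)^{2k_j-n_j}$ accounts for the double zero forced at poles of $g$. Applying $\tau^*\eta=-\overline{g^2\eta}$ at the partner points $-1/\overline{a_j}$ then produces the factor $(\overline{a_j}z+1)^{-n_j}$. The key check is a local computation near $\infty$, done in the chart $w=1/z$: it shows that $\eta$ is determined up to a constant by its divisor, since a rational one-form on the sphere is determined that way. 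Its divisor has degree $-2$, which gives the relation
\[
2l+\sum_j 2k_j-2\sum_j n_j+(\text{order at }\infty)=-2 .
\]
Combining this with the transformation law at $\infty$ forces $\sum n_j=m+1$. The constant must satisfy the same law, and it comes out purely imaginary, so it can be normalized to $i$ by a dilation. Completeness, via the finite-total-curvature theorem of Osserman together with the Jorge--Meeks multiplicity $\nu_j\geq 1$, gives $n_j\geq 2$. The oddness of $m$ comes from Martin's non-orientable Jorge--Meeks formula, or equivalently from $\chi(\mathbb{RP}^2)=1$ combined with Corollary \ref{closedlineformulanonoriented}-type parity. We then check that $\chi=-\deg g+\sum(\nu_j+1)$ forces $m$ odd.

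The converse is Theorem \ref{globalWeierstrassNonOriented}. We verify by direct substitution that the given $(g,\eta)$ satisfy \eqref{transformationalpropertiesWeierstrass} with $\tau=\cJ$, that the immersion condition holds (by construction of the $b_j$ factor), and that the ends are complete because $n_j\geq2$; the total curvature is then $-4\pi m/2=-2\pi m$ on the quotient. The step I expect to be the main obstacle is the bookkeeping for $\eta$ at $\infty$ and at the points $-1/\overline{a_j}$. Verifying $\tau^*\eta=-\overline{g^2\eta}$ exactly, including the constant and the sign, together with the degree count giving $m+1=\sum n_j$, requires careful tracking of the exponents $2k_j$ versus $n_j$. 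I would do this first in the case $N=1$, $l=0$, and then observe that each extra factor pair transforms independently.
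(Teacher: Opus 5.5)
\textbf{Overall.} Your strategy is sound: pass to the double cover $\overline{\bC}$ with $\tau=\cJ$, read $g$ off its $\cJ$-paired divisor, determine $\eta$ from its divisor and $\tau^*\eta=-\overline{g^2\eta}$, get $\sum_j n_j=m+1$ from $\deg(\eta)=-2$, and get $n_j\geq 2$ from $\nu_j=n_j-1\geq 1$. There is, however, a genuine gap in the two claims about $m$ itself.

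\textbf{The argument for $m$ odd fails.} Martin's formula with $\chi(\mathbb{RP}^2)=1$ reads $1=-m+\sum_j(\nu_j+1)=-m+\sum_j n_j$. This is just $\sum_j n_j=m+1$ again and carries no parity information. Corollary \ref{closedlineformulanonoriented} does not apply here, since it assumes closed curvature lines, and it gives no parity either. Moreover, the sign you propose to ``absorb'' by a vertical rotation cannot be absorbed; it is exactly what forces oddness. Both $z$ and each $f_c(z)=(\overline{c}z+1)/(z-c)$ commute with $\cJ$. So if $g=C\prod_{i=1}^m f_i$, then $g\circ\cJ=(-1)^mC/\overline{\prod_i f_i}$, whereas $\cJ\circ g=-1/(\overline{C}\,\overline{\prod_i f_i})$. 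Hence $|C|^2=(-1)^{m+1}$, which forces $m$ odd and $|C|=1$. A rotation $g\mapsto e^{i\theta}g$ cannot change the sign of $|C|^2$. This is the classical fact that an antipodally equivariant self-map of $\bS^2$ has odd degree. The same sign reappears as $(-1)^{\sum_j n_j}=(-1)^{m+1}$ when you compute $\tau^*\eta_0$ for the monic product $\eta_0$. It is what gives $\tau^*\eta_0=\overline{g^2\eta_0}$, and hence a purely imaginary constant in $\eta$, so your verification of the converse also depends on $m$ being odd.

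\textbf{$m\geq 3$ is never justified.} The relations $\sum_j n_j=m+1$ and $n_j\geq 2$ only give $m\geq 2N-1$, so for $N=1$ you must exclude $m=1$. This comes from the period conditions, not from the shape of the data. Either cite Meeks' bound (total curvature at most $-6\pi$ for complete non-orientable minimal surfaces of finite total curvature), or check directly. For $N=1$, $m=1$, $n_0=2$ there are only two candidates:
\begin{itemize}
\item $g=z$, $\eta=iz^{-2}dz$, where $\gamma(0)=i\notin\bR$;
\item $g=(\overline{b}z+1)/(z-b)$, $\eta=i(z-b)^2z^{-2}dz$, where $\alpha(0)+\overline{\beta(0)}=-2ib-2ib=-4ib\neq 0$.
\end{itemize}
Both violate Lemma \ref{equivalentperiodcondition}.

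\textbf{Smaller points.}
\begin{itemize}
\item Moving an end to $0$ uses automorphisms of the domain commuting with $\cJ$, namely $z\mapsto(\alpha z+\beta)/(-\overline{\beta}z+\overline{\alpha})$. Rotations of $\bR^3$ act on the target of $g$; they are what remove the constant $C$.
\item You must label each lifted pair so that $g$ has its pole, not its zero, at $a_j$. This is allowed, since $a_j$ and $-1/\overline{a_j}$ lie over the same end.
\item Your displayed divisor relation miscounts $n_0$. The correct count is $-n_0+(2k_0-n_0)+\sum_{j\geq 1}(2k_j-2n_j)+2l=2m-2\sum_{j\geq 0}n_j=-2$.
\end{itemize}
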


From the proof of the previous result, see \cite[Proof of Theorem 4.4.25]{CarlosThesis}, we are able to state the following:
\begin{cor}\label{jorgemeeksmultiplicitiespunctureprojectivespaces}
    The Jorge-Meeks multiplicities of the complete minimal immersion  $X':\mathbb{RP}^2\setminus\{p_1,\ldots, p_N\}\to \bR^3$ of Theorem \ref{Puncturedprojectivespaces} are
    \begin{equation*}
        \nu_j=n_j-1.
    \end{equation*}
\end{cor}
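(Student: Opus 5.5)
The plan is to compute the Jorge-Meeks multiplicity at each end directly from Definition \ref{JorgeMeeksmultiplicities}. That means finding the maximal pole order among $\phi_1,\phi_2,\phi_3$ of \eqref{PhiWeierstrass} at the ends of the orientable double cover. By the symmetry $\tau(z)=\cJ(z)=-1/\overline{z}$, the ends of the double cover come in pairs $a_j$ and $\cJ(a_j)=-1/\overline{a_j}$ (with $a_0=0$ paired with $\infty$). By Proposition \ref{BranchPointsComeInPairs} and the transformation rules \eqref{transformationalpropertiesWeierstrass}, the multiplicities at $a_j$ and at $\tau(a_j)$ agree. So it suffices to compute at the finite points $z=a_j$, $j=0,\ldots,N-1$, and the answer then descends to $p_j=[a_j]$ in $\mathbb{RP}^2$.

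Fix $j$ and expand near $z=a_j$.
\begin{itemize}
\item For $j\geq 1$, the factor $(\overline{a_j}z+1)$ does not vanish at $a_j$, because $|a_j|^2\neq -1$. None of the $b_k$ equal $a_j$, and $a_k\neq a_j$ for $k\neq j$.
\item Hence $g$ has a pole of order $k_j$ at $a_j$. The form $\eta$ has order $2k_j-n_j$ there, since the $(z-a_j)^{2k_j}$ in the numerator meets $(z-a_j)^{n_j}$ in the denominator.
\item Consequently $g\eta$ has order $k_j-n_j$, and $g^2\eta$ has order $-n_j$. The order of $\eta$ itself is $2k_j-n_j\geq -n_j$.
\item For $j=0$, $g$ has a zero of order $k_0$ and $\eta$ has order $-n_0$ at $0$. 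So $\eta$ has order $-n_0$, while $g\eta$ and $g^2\eta$ have strictly larger orders whenever $k_0>0$.
\end{itemize}

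In either case the most negative order among $\eta$, $g\eta$, $g^2\eta$ is $-n_j$. For $j\geq1$ this is attained only by $g^2\eta$ when $k_j>0$, and by all three when $k_j=0$. For $j=0$ it is attained by $\eta$.

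The step needing care is passing from these orders to the orders of $\phi_1=\tfrac12(1-g^2)\eta$ and $\phi_2=\tfrac i2(1+g^2)\eta$. If the minimal order $-n_j$ is attained uniquely, then both $\phi_1$ and $\phi_2$ inherit it, and there is nothing to worry about. If $k_j=0$ (or $k_0=0$), cancellation could occur in one combination. It cannot occur in both, however, because $\phi_1 - i\phi_2=\eta$ and $\phi_1+i\phi_2=-g^2\eta$. Thus at least one of $\phi_1,\phi_2$ has a pole of exact order $n_j$, and none has a worse pole.

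It follows that $\nu_j+1=n_j$, that is, $\nu_j=n_j-1$. I expect the only real obstacle to be bookkeeping: confirming from the hypotheses on $a_j,b_k$ in Theorem \ref{Puncturedprojectivespaces} that no stray factor vanishes at $a_j$. The remaining concern, the possible cancellation in $\phi_1,\phi_2$, is handled by the identities above. As a consistency check, $\sum_j n_j = m+1$ together with the non-orientable Jorge-Meeks formula should reproduce total curvature $-2\pi m$.
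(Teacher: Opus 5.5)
Your proof is correct, but it does not follow the route the paper takes. The paper gives no computation for this corollary. It presents it as a by-product of the derivation of the normal form in Theorem \ref{Puncturedprojectivespaces}, which is carried out in the author's thesis; there the exponents $n_j$ in \eqref{oneformpunctureprojectivespace} presumably enter through the pole orders of $\Phi$ at the ends.

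You instead check $\nu_j=n_j-1$ directly from the final explicit formulas. This makes the statement verifiable from the paper alone. The cost is the bookkeeping you flag: $a_j\neq 0$, $a_j\overline{a_k}\neq -1$, $b_k\neq 0,a_j$ and $a_j\overline{b_k}\neq -1$. All of these are among the stated hypotheses.

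Two small corrections.

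First, the equality of the multiplicities at $a_j$ and $\tau(a_j)$ does not come from Proposition \ref{BranchPointsComeInPairs}, which concerns ramification of $g$. It comes from $\tau^{*}\Phi=\overline{\Phi}$, which follows from \eqref{transformationalpropertiesWeierstrass}. In the chart $\tau\circ\psi\circ T$ of Lemma \ref{Chartrelatedpoint}, a local expression $f(z)\,dz$ becomes $\overline{f(\overline{z})}\,dz$, which has the same pole order.

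Second, the cancellation case needs no separate treatment. The identities $\phi_3=g\eta$, $\phi_1-i\phi_2=\eta$ and $\phi_1+i\phi_2=-g^2\eta$ relate $(\phi_1,\phi_2,\phi_3)$ and $(\eta,g\eta,g^2\eta)$ by an invertible constant linear map. So the maximal pole order of $\Phi$ always equals that of $(\eta,g\eta,g^2\eta)$. Alternatively, when $k_j=0$ you have $g(a_j)\in\bC^{*}$, so $\phi_3$ alone already has a pole of exact order $n_j$.
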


Combining this calculation with our previous analysis of the ramification of the Gauss map at the ends, we are able to prove the following restriction
\begin{prop}\label{ramificationorderpunctureprojectiveclosedlines}
     Let $X':\mathbb{RP}^2\setminus\{p_1,\ldots, p_N\}\to \bR^3$ be a complete minimal immersion of finite total curvature of Theorem \ref{Puncturedprojectivespaces} with closed curvature lines. Then
    \begin{equation*}
        r_{p_j}(g)=n_j-2.
    \end{equation*}
    Moreover, if $k_j\geq 1$, then $k_j=n_j-1.$
\end{prop}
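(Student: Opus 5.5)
Combine Theorem \ref{JorgeMeeksHopf} with Corollary \ref{CriterioclassC} and Corollary \ref{jorgemeeksmultiplicitiespunctureprojectivespaces}; the second assertion then reduces to reading off the order of $g$ at the ends from the explicit formula in Theorem \ref{Puncturedprojectivespaces}.

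\textbf{Step 1.} Pass to the orientable double cover $X:\bS^2\setminus\{a_0,\ldots,a_{N-1},\tau(a_0),\ldots,\tau(a_{N-1})\}\to\bR^3$, where $\tau(z)=-1/\overline{z}$. The hypothesis of closed curvature lines around the ends means $X\in\cC$. By Corollary \ref{CriterioclassC}, the Hopf differential then has a pole of order exactly two at every end, so $ord_{p_j}Q_H=-2$.

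\textbf{Step 2.} Theorem \ref{JorgeMeeksHopf} gives $\nu_j=r_{p_j}(g)-ord_{p_j}Q_H-1=r_{p_j}(g)+1$. Corollary \ref{jorgemeeksmultiplicitiespunctureprojectivespaces} gives $\nu_j=n_j-1$. Hence
\begin{equation*}
r_{p_j}(g)=n_j-2.
\end{equation*}
The multiplicity of an end of $X'$ should be computed on the double cover at the end $a_j$; by Proposition \ref{BranchPointsComeInPairs} the value is the same at $\tau(a_j)$, so nothing depends on which lift is chosen.

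\textbf{Step 3.} For the second claim, read off the local order of $g$ at $a_j$ from the formula in Theorem \ref{Puncturedprojectivespaces}. The $b_j$ satisfy $b_j\neq a_k,-1/\overline{a_k}$, and the points $a_k$ are distinct with $a_j\overline{a_k}\neq-1$. Consequently no other factor vanishes or has a pole at $a_j$, and $ord_{a_j}(g)=-k_j$ (for $j=0$, $ord_0(g)=k_0$).

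\textbf{Step 4.} If $k_j\geq1$, then $a_j$ is a zero or pole of $g$ of multiplicity $k_j$, so $r_{a_j}(g)=k_j-1$. Combining with Step 2 gives $k_j-1=n_j-2$, that is, $k_j=n_j-1$.

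The only delicate point is Step 3: one must check the nondegeneracy conditions carefully so that no cancellation occurs at $a_j$, and do the same at $\tau(a_j)=-1/\overline{a_j}$, where the factor $(\overline{a_j}z+1)^{k_j}$ vanishes. Everything else is direct substitution.
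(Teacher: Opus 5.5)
Your proposal is correct and follows essentially the same route as the paper. You use $ord_{p_j}Q_H=-2$ from Corollary \ref{CriterioclassC} and $\nu_j=n_j-1$ from Corollary \ref{jorgemeeksmultiplicitiespunctureprojectivespaces} in Theorem \ref{JorgeMeeksHopf} to get $r_{p_j}(g)=n_j-2$, then read off $mult_{p_j}(g)=k_j$ from the explicit Gauss map when $k_j\geq 1$. Your explicit check of the nondegeneracy conditions ruling out cancellation at $a_j$, and your appeal to Proposition \ref{BranchPointsComeInPairs} for the choice of lift, fill in details the paper only asserts.
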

\begin{proof}
    In fact, by Theorem \ref{JorgeMeeksHopf}, we know that
    \begin{equation*}
        \nu_{p_j}=r_{p_j}(g)-ord_{p_j}Q_H-1=r_{p_j}+1.
    \end{equation*}
    Using Corollary \ref{jorgemeeksmultiplicitiespunctureprojectivespaces} we obtain that $r_{p_j}(g)=n_j-2$. In the special case when $k_j\geq 1$, we have that $g(p_j)$ has either a zero or a pole at the end $p_j$ (depending on whether $p_j=a_j$ or $p_j=-\frac{1}{\overline{a_j}})$ and the Gauss map can be written in a chart centered at $p_j$ as $g(z)=z^{\pm k_j}H(z)$ with $H$ holomorphic $H(0)\neq 0$. Hence, in any case $mult_{p_j}(g)=\abs{ord_{p_j}(g)}=k_j$, and therefore $r_{p_j}=k_j-1$, which implies, by the first part of the proof, that $k_j=n_j-1.$
\end{proof}

We make the following definition:
 \begin{defn}\label{alphabetagamma}
 \normalfont{
     Let $(g,\eta)$ be the Weierstrass data of a orientable minimal immersion of finite total curvature $X:\Sigma\setminus\{p_1,\ldots,p_N\}\to \bR^3$ and denote by $dh=g\eta$ the height differential. Then, for each end $p_j$, we define
     \begin{equation*}
    \alpha(p_j)\coloneq \eval{Res}_{p_j}\frac{dh}{g},\quad \beta(p_j)\coloneq \eval{Res}_{p_j}gdh,\quad  \gamma(p_j)\coloneqq Res_{p_j}dh.
\end{equation*}
}
 \end{defn}

In the non-orientable setting, we have that those residues have certain transformation properties:
 \begin{lema}\label{alphabetagammarelatedpoints}
     Let $\Sigma$ be a closed Riemann surface with an antiholomorphic involution $\tau:\Sigma\to \Sigma$. Suppose further that $(g,\eta)$ are Weierstrass data defined on $\Sigma\setminus\{p_1,\ldots p_N\}$ satisfying only the transformational property \eqref{transformationalpropertiesWeierstrass}. Then for all $j=1,\ldots, N$
\begin{equation*}
    \alpha(\tau(p_j))+\overline{\beta(p_j)}=0,\quad \gamma(\tau(p_j))=\overline{\gamma(p_j)}.
\end{equation*}
 \end{lema}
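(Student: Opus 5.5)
The plan is to compute how each of the three one-forms $dh/g$, $g\,dh$ and $dh$ pulls back under $\tau$, using only the transformational property \eqref{transformationalpropertiesWeierstrass}. Then Lemma \ref{Residuerelatedpoints} converts each pullback identity into an identity between residues at the points $p_j$ and $\tau(p_j)$. The height differential has already been handled in equation \eqref{transformationheightdiferential}: from $g\circ\tau=-1/\overline{g}$ and $\tau^*\eta=-\overline{g^2\eta}$ we get $\tau^*dh=\overline{dh}$. So I only need the analogous computations for $\eta=dh/g$ and $g\,dh=g^2\eta$.

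First, $\tau^{*}(dh/g)=\tau^*\eta=-\overline{g^2\eta}=-\overline{g\,dh}$. Second,
\begin{equation*}
\tau^{*}(g\,dh)=(g\circ\tau)^2\,\tau^*\eta=\frac{1}{\overline{g}^2}\left(-\overline{g^2\eta}\right)=-\overline{\eta}=-\overline{dh/g}.
\end{equation*}
The two identities are equivalent, since $\tau$ is an involution, so either one suffices.

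Next I apply Lemma \ref{Residuerelatedpoints} at the point $\tau(p_j)$, using $\tau(\tau(p_j))=p_j$. Taking $\omega=dh/g$ gives
\begin{equation*}
\operatorname{Res}_{\tau(p_j)}\overline{\tau^*(dh/g)}=\overline{\operatorname{Res}_{p_j}(dh/g)}.
\end{equation*}
The left-hand side equals $\operatorname{Res}_{\tau(p_j)}(-g\,dh)=-\beta(\tau(p_j))$, so $\beta(\tau(p_j))+\overline{\alpha(p_j)}=0$. Replacing $p_j$ by $\tau(p_j)$ and conjugating yields $\alpha(\tau(p_j))+\overline{\beta(p_j)}=0$. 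This substitution is legitimate because the set of ends is $\tau$-invariant. Equivalently, I can use the second identity directly with $\omega=g\,dh$ at the point $\tau(p_j)$ to get $-\alpha(\tau(p_j))=\overline{\beta(p_j)}$. The $\gamma$ identity follows in the same way from $\tau^*dh=\overline{dh}$, exactly as in the proof of Theorem \ref{Nonexistencetheorem}.

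The only delicate point is bookkeeping in Lemma \ref{Residuerelatedpoints}. One must check that $\overline{\tau^*\omega}$ is meromorphic, which holds because $\tau$ is antiholomorphic. One must also track at which point each residue is evaluated and where conjugates appear, so that the correct pairing between $p_j$ and $\tau(p_j)$ comes out.
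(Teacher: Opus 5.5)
Your proof is correct. The identities $\tau^{*}(dh/g)=-\overline{g\,dh}$, $\tau^{*}(g\,dh)=-\overline{dh/g}$ and $\tau^{*}dh=\overline{dh}$, combined with Lemma \ref{Residuerelatedpoints} applied at $\tau(p_j)$, give exactly the two relations; the paper states this lemma without writing out a proof, but this is the same mechanism it uses for $dh$ in the proof of Theorem \ref{Nonexistencetheorem}.

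One small point: the appeal to $\tau$-invariance of the ends is not needed for the substitution $p_j\mapsto\tau(p_j)$, since Lemma \ref{Residuerelatedpoints} holds at every point of $\Sigma$.
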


When $\Sigma$ has the topology of a sphere, there is a simplification of the period problem 
 \begin{lema}\label{equivalentperiodcondition}
     Suppose that $\Phi$ are Weierstrass data defined on $M=\bS^2\setminus\{p_1,\ldots,p_N\}$. Then 
     \begin{equation*}
           Re\ \int_\gamma\Phi=0,\ \forall\gamma\in H_1(M,\bZ)\iff   \gamma(p_j)\in \bR,\quad \alpha(p_j)+\overline{\beta(p_j)}=0,\quad \forall j=1,\ldots ,N.
     \end{equation*}
 \end{lema}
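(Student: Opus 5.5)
The plan is to reduce the real period condition on $M=\bS^2\setminus\{p_1,\ldots,p_N\}$ to a condition on residues at the punctures, and then rewrite those residues in terms of $\alpha,\beta,\gamma$ from Definition \ref{alphabetagamma}.

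First, the homology reduction. Since $\Sigma=\bS^2$ has genus zero, $H_1(M,\bZ)$ is generated by the classes of small positively oriented loops $c_j$ around the punctures $p_j$, subject to the single relation $\sum_j[c_j]=0$. The forms $\phi_1,\phi_2,\phi_3$ are holomorphic on $M$ and extend meromorphically to $\bS^2$. Hence
\begin{equation*}
\int_{c_j}\phi_k=2\pi i\,\eval{Res}_{p_j}\phi_k ,
\end{equation*}
and the period condition is equivalent to $Re\big(2\pi i\,\eval{Res}_{p_j}\phi_k\big)=0$ for all $j,k$. This in turn says that $\eval{Res}_{p_j}\phi_k\in\bR$ for every $j$ and every $k=1,2,3$. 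Both directions of the equivalence go through this reformulation. The fact that the loops satisfy a relation does not matter, since the condition is imposed on every $c_j$ anyway.

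Second, the residue computation. Using $dh=g\eta$, I would rewrite the Weierstrass forms \eqref{PhiWeierstrass} as
\begin{equation*}
\phi_1=\tfrac12\Big(\frac{dh}{g}-g\,dh\Big),\quad \phi_2=\tfrac{i}{2}\Big(\frac{dh}{g}+g\,dh\Big),\quad \phi_3=dh .
\end{equation*}
By linearity of the residue this gives
\begin{equation*}
\eval{Res}_{p_j}\phi_1=\tfrac12\big(\alpha(p_j)-\beta(p_j)\big),\quad \eval{Res}_{p_j}\phi_2=\tfrac{i}{2}\big(\alpha(p_j)+\beta(p_j)\big),\quad \eval{Res}_{p_j}\phi_3=\gamma(p_j).
\end{equation*}

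Third, the elementary algebra. The third residue is real exactly when $\gamma(p_j)\in\bR$. The first two residues are real exactly when $\alpha-\beta\in\bR$ and $\alpha+\beta\in i\bR$. Writing $\alpha=\alpha_1+i\alpha_2$ and $\beta=\beta_1+i\beta_2$, these conditions read $\alpha_2=\beta_2$ and $\alpha_1=-\beta_1$. Together they say precisely that $\alpha=-\overline{\beta}$, that is, $\alpha(p_j)+\overline{\beta(p_j)}=0$. Each step is an equivalence, so both implications of the lemma follow at once.

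I do not expect a real obstacle here. The only point needing care is the first step: justifying that the loops around the punctures generate $H_1(M,\bZ)$, which holds because the compactification has genus zero. The reformulation via $dh/g$ and $g\,dh$ is what makes the residue bookkeeping transparent.
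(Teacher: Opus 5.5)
Your proof is correct. Reducing to small loops around the punctures (which generate $H_1(M,\bZ)$ in genus zero) and rewriting $\phi_1=\tfrac12(dh/g-g\,dh)$, $\phi_2=\tfrac{i}{2}(dh/g+g\,dh)$, $\phi_3=dh$ turns the period condition into reality of $\gamma$, $\tfrac12(\alpha-\beta)$ and $\tfrac{i}{2}(\alpha+\beta)$, and the last two together are exactly $\alpha+\overline{\beta}=0$. The paper states this lemma without proof, and your argument is the standard residue computation one would expect to fill it in.
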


Next we adapt a lemma originally due to S. Zhang for twice punctured projective spaces in $\bR^3$, which provides a simplification tool to solve the period problem.
\begin{lema}[{S. Zhang \cite{Zhang}}]\label{simplifiedresidueproblem}
    Let $\Phi$ be Weierstrass data on $M=\bS^2\setminus\left\lbrace 0,\infty,a_1,\ldots,a_{N-1},-\frac{1}{\overline{a_1}},\ldots, -\frac{1}{\overline{a_{N-1}}}\right\rbrace$, satisfying the immersion condition \eqref{immersion condition} and the transformational property $\tau^{*}\Phi=\overline{\Phi}$ where $\tau=\cJ$ is the antiholomorphic involution in $M$. Then the period problem is solved if and only if
    \begin{gather*}
           \forall 1\leq j\leq N-1,\quad \gamma(0),\gamma(a_j)\in \bR,\quad \alpha(0)+\overline{\beta(0)}=0,\quad \alpha(a_j)+\overline{\beta(a_j)}=0.
    \end{gather*}
    In that case there exists a minimal immersion $X':\mathbb{RP}^2\setminus\{p_1,\ldots p_N\}\to \bR^3$ through the non-orientable Weierstrass representation Theorem \ref{globalWeierstrassNonOriented} with Weierstrass data $\Phi$.
\end{lema}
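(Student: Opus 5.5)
The statement to prove is Lemma~\ref{simplifiedresidueproblem}. The plan is to reduce it to Lemma~\ref{equivalentperiodcondition}, which on the $2N$-punctured sphere $M$ characterizes the period problem by the conditions
\begin{equation*}
\gamma(q)\in\bR,\qquad \alpha(q)+\overline{\beta(q)}=0,
\end{equation*}
imposed at every puncture $q$. The punctures come in $\tau$-pairs: $q\in\{0,a_1,\ldots,a_{N-1}\}$ is paired with $\tau(q)\in\{\infty,-1/\overline{a_1},\ldots,-1/\overline{a_{N-1}}\}$. So it suffices to show that if the conditions hold at $0,a_1,\ldots,a_{N-1}$, they hold automatically at the partner punctures $\infty,-1/\overline{a_j}$. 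The converse direction is trivial, since the listed conditions are a subset of those in Lemma~\ref{equivalentperiodcondition}.

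For the third coordinate, I use Lemma~\ref{alphabetagammarelatedpoints}. It gives $\gamma(\tau(q))=\overline{\gamma(q)}$, so $\gamma(q)\in\bR$ implies $\gamma(\tau(q))\in\bR$.

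For the horizontal conditions, I apply Lemma~\ref{alphabetagammarelatedpoints} at $q$ and again at $\tau(q)$, using $\tau^2=\mathrm{id}$. This yields
\begin{equation*}
\alpha(\tau(q))=-\overline{\beta(q)},\qquad \beta(\tau(q))=-\overline{\alpha(q)}.
\end{equation*}
Hence
\begin{equation*}
\alpha(\tau(q))+\overline{\beta(\tau(q))}=-\overline{\beta(q)}-\alpha(q)=-\left(\alpha(q)+\overline{\beta(q)}\right)=0.
\end{equation*}
The hypothesis of Lemma~\ref{alphabetagammarelatedpoints} is exactly the transformational property \eqref{transformationalpropertiesWeierstrass}, which is equivalent to $\tau^*\Phi=\overline{\Phi}$. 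I will check this equivalence explicitly: $\tau^*\phi_3=\overline{\phi_3}$ is the computation \eqref{transformationheightdiferential}, and the analogous computations for $\phi_1\pm i\phi_2$ exchange $\eta$ and $g^2\eta$ up to sign.

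There is one subtlety, which I expect to be the main (though mild) obstacle: the second application of Lemma~\ref{alphabetagammarelatedpoints} is at the point $\tau(q)$, so I must be sure the lemma applies at every puncture and not only at the listed $p_j$. The lemma is stated for all $j$, and the puncture set is $\tau$-invariant, so this is fine. If needed, I can rederive the identity directly from Lemma~\ref{Residuerelatedpoints} applied to $dh/g$ and $g\,dh$, using $\tau^*(dh/g)=-\overline{g\,dh}$.

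Once the period problem is solved, $X(P)=\mathrm{Re}\int^P\Phi$ is well defined on $M$. The immersion condition holds by hypothesis, and $\tau$ is fixed-point free because $\cJ$ has no fixed points on $\bS^2$. Theorem~\ref{globalWeierstrassNonOriented} then yields the non-orientable immersion $X'$ on $M/\langle\tau\rangle=\mathbb{RP}^2\setminus\{p_1,\ldots,p_N\}$.
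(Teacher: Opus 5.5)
Your argument is correct and is the intended one. The paper does not write out a proof (it cites Zhang), but it places Lemma \ref{equivalentperiodcondition} and Lemma \ref{alphabetagammarelatedpoints} just before this statement for exactly this reduction: the $2N$ residue conditions collapse, via $\gamma(\tau(q))=\overline{\gamma(q)}$ and $\alpha(\tau(q))+\overline{\beta(\tau(q))}=-\bigl(\alpha(q)+\overline{\beta(q)}\bigr)$, to the $N$ conditions at $0,a_1,\ldots,a_{N-1}$, and then Theorem \ref{globalWeierstrassNonOriented} applies because $\cJ$ is fixed-point free. You also handle the one delicate step correctly: applying Lemma \ref{alphabetagammarelatedpoints} at $\tau(q)$ as well is legitimate because the puncture set is $\tau$-invariant.
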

\subsection{Non-orientable minimal projective spaces with two ends}

We now focus on complete minimal immersions $X':M'\to \bR^3$ of the twice punctured projective space $M'=\mathbb{RP}^2\setminus\{p_1,p_2\}$ into Euclidean space. After a conformal diffeomorphism, we can always assume, without loss of generality, that the ends of the orientable double cover are positioned at the points $0,\infty,a,-\frac{1}{a}$ with $a\in \bR^{*}$. Therefore the oriented double minimal immersion associated to $X'$ can be written as 
\begin{equation*}
    X:M\to \bR^3,\quad M=\bS^2\setminus\left\lbrace0,\infty,a,-\frac{1}{a}\right\rbrace.
\end{equation*}
By Theorem \ref{Puncturedprojectivespaces} the Weierstrass data of such two ended non-orientable surface with total curvature $-2\pi m$ has to be given by 
\begin{gather}\label{Weierstrasstwopunctures}
    g=\frac{z^{k_0}(az+1)^{k_1}}{(z-a)^{k_1}}\prod_{j=1}^{m-k_0-k_1}\frac{(\overline{b_j}z+1)}{(z-b_j)}, \quad \eta=i\frac{(z-a)^{2k_1}}{z^{n_0}(z-a)^{n_1}(az+1)^{n_1}}\prod_{j=1}^{m-k_0-k_1}(z-b_j)^2dz,
    \end{gather}
where $k_0,k_1\geq0,\ n_0,n_1\geq 2$ are integers with $k_0+k_1\leq m$, $n_0+n_1=m+1$ and $b_j\neq 0, \infty, a, -\frac{1}{a}.$

We cannot aim at finding such a surface with catenoidal ends by the non-existence theorem of R. Kusner of complete non-orientable minimal surfaces of finite total curvature and two embedded ends \cite{Kusner}. We also prove the obstruction Theorem \ref{theorem6}:
\begin{thm}\label{Nonexistencetwopuncturesclosed}
   There does not exist a complete minimal immersion of twice-punctured projective space with parallel ends and finite total curvature with closed curvature lines around its ends. 
\end{thm}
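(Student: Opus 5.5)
The plan is to use the hypothesis of parallel ends to put every end at a zero or a pole of the Gauss map. Combined with Corollary \ref{CriterioclassC} and Proposition \ref{ramificationorderpunctureprojectiveclosedlines}, this should force the Weierstrass data \eqref{Weierstrasstwopunctures} into a very rigid shape. We then show that the period conditions of Lemma \ref{simplifiedresidueproblem} cannot be satisfied.

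\textbf{Normalizing the data.} Work on the double cover $M=\bS^2\setminus\{0,\infty,a,-\frac{1}{a}\}$ with $\tau=\cJ$. Parallel ends means the limit normals at the four ends lie on one line. After a rotation this line is the vertical axis, so $g$ takes the values $0$ or $\infty$ at all four ends. In \eqref{Weierstrasstwopunctures} this means $k_0\geq 1$ and $k_1\geq 1$. Proposition \ref{ramificationorderpunctureprojectiveclosedlines} then gives $k_0=n_0-1$ and $k_1=n_1-1$. Using $n_0+n_1=m+1$, we get $k_0+k_1=m-1$, so $l=1$. Hence there is exactly one free point $b\neq 0,\infty,a,-\frac1a$, and the data depend only on $(a,b,k_0,k_1)$ with $a\in\bR^*$.

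\textbf{Consequences at the ends.} At each end $g$ has a zero or a pole, so $\frac{dg}{g}$ has a simple pole with real residue $\pm k_j$. Since $Q_H=-\frac12\frac{dg}{g}\otimes dh$ has a pole of order exactly two with non-zero real quadratic limit (Corollary \ref{CriterioclassC}), the height differential $dh=g\eta$ must have a simple pole with non-zero real residue $\gamma(p)$ at each of the four ends. One can also check this directly: $g\eta\sim z^{k_0-n_0}dz=z^{-1}dz$ at $0$, and similarly at the other ends. Lemma \ref{alphabetagammarelatedpoints} gives $\gamma(\tau(p))=\overline{\gamma(p)}=\gamma(p)$. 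The residue theorem for $dh$ on $\bS^2$ then forces $\gamma(a)=-\gamma(0)$.

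Next we impose the sign compatibility of the quadratic limits. These are $-\frac12 k_0\gamma(0)$ at $0$ and $\frac12 k_1\gamma(a)$ at $a$, because $g$ has a pole of order $k_1$ at $a$. This records which of the foliations $\cF_X$ or $f_X$ is closed at each end, consistently with Corollary \ref{LinesCurvaturerelatedpoints}.

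\textbf{Period conditions and the contradiction.} We impose the conditions of Lemma \ref{simplifiedresidueproblem}: $\gamma(0),\gamma(a)\in\bR$ and $\alpha(0)+\overline{\beta(0)}=0$, $\alpha(a)+\overline{\beta(a)}=0$. The plan is to compute these residues explicitly from the closed formulas for $g$ and $\eta$ with $l=1$. They are residues of rational functions at poles of order $n_0\pm k_0$, and so on, expressed in $a$, $b$, $\overline b$. The goal is an algebraic system in $(a,b)$ that is inconsistent with $\gamma(0)\neq 0$ real.

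I would first treat the low-order case $n_0=n_1=2$ (so $k_0=k_1=1$, $m=3$) by hand to find the mechanism. A likely source is that the reality of $\gamma(0)$ pins $b$ to a real or imaginary line, which then forces $\alpha(0)+\overline{\beta(0)}$ to be a non-vanishing multiple of $\gamma(0)$. I would then try to extract the same mechanism in general, perhaps via a symmetry $z\mapsto -\frac{1}{\overline z}$ relating $\alpha(a)$ to $\beta(-\frac1a)$.

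The main obstacle is this general residue computation, since the pole orders $n_0+k_0$ and $n_1+k_1$ grow with $m$. If a direct computation is unwieldy, I would look for a structural identity that avoids explicit expansion. One candidate is relating $\alpha$ and $\beta$ to $\gamma$ through $\frac{dg}{g}$, using that $Q_H$ has only double poles at the ends and the residue theorem applied to the meromorphic forms $\frac{dh}{g}$ and $g\,dh$.
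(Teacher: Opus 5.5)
Your normalization matches the paper's. After rotating so that $k_0,k_1\geq 1$, Proposition \ref{ramificationorderpunctureprojectiveclosedlines} gives $k_j=n_j-1$, hence $l=1$, a single free point $b$, and $dh$ has simple poles with real residues. The gap is that the proof stops exactly where the contradiction has to come from. For the period conditions you only announce an explicit computation and guess a mechanism (that $\alpha(0)+\overline{\beta(0)}$ is a non-vanishing multiple of $\gamma(0)$). As a fallback you offer an unspecified ``structural identity''. Nothing is actually derived, not even for $n_0=n_1=2$, so as written this is a plan rather than a proof. You also under-use the closed-curvature hypothesis: reality of $\gamma(0)$ alone only puts $b$ on a line, but you have reality of the residue at both ends.

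Here is what closes it.

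\textbf{Step 1: $b=\pm i$.} From $dh=i\frac{(z-b)(\overline{b}z+1)}{z(z-a)(az+1)}dz$ you get $\gamma(0)=ib/a$, so $b=it$ with $t\in\bR^*$. Then $\gamma(a)\in\bR$ requires $(a-b)(\overline{b}a+1)=a(1-t^2)-it(1+a^2)\in i\bR$, which forces $t=\pm 1$, i.e.\ $b=\pm i$.

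\textbf{Step 2: reducing the period conditions.} The form $g\,dh=i\frac{z^{n_0-2}(az+1)^{n_1-2}(1-bz)^2}{(z-a)^{n_1}}dz$ is holomorphic at $0$, and $dh/g=\eta$ is holomorphic at $a$. So $\beta(0)=\alpha(a)=0$, and the conditions of Lemma \ref{simplifiedresidueproblem} reduce to $\alpha(0)=0$ and $\beta(a)=0$.

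\textbf{Step 3: positivity.} For $b=\pm i$ one has $i(1-bz)^2=\pm 2z+i(1-z^2)$. Since $a\in\bR$,
\[
\mathrm{Re}\,\beta(a)=\frac{\pm 2}{(n_1-1)!}\left.\frac{d^{\,n_1-1}}{dz^{\,n_1-1}}\left[z^{n_0-1}(az+1)^{n_1-2}\right]\right|_{z=a}=\frac{\pm 2}{(n_1-1)!}\sum_{k=\max\{n_1-n_0,0\}}^{n_1-2}\binom{n_1-2}{k}\frac{(n_0-1+k)!}{(n_0-n_1+k)!}\,a^{n_0-n_1+2k}.
\]
The sum is nonempty because $n_0,n_1\geq 2$. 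Since $m$ is odd, $n_1-n_0=m+1-2n_0$ is even, so every exponent is even and nonnegative and every term is strictly positive. Hence $\beta(a)\neq 0$, a contradiction. This sign-definiteness argument is what your proposal is missing.

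For $n_0=n_1=2$ your guessed mechanism does hold: $\alpha(0)=-2ib(1+ab)=-2a(1+ab)\gamma(0)$ vanishes only at the excluded value $b=-1/a$. However, you give no reason it persists for general $(n_0,n_1)$, whereas the positivity of $\mathrm{Re}\,\beta(a)$ does.
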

\begin{proof}
    Assume by contradiction that such an immersion exists. After a rotation of the surface we can assume, without loss of generality, that $k_0,k_1\geq 1$. By Proposition \ref{ramificationorderpunctureprojectiveclosedlines} $n_j=k_j+1$ which implies that by Theorem \ref{Puncturedprojectivespaces}
    \begin{equation*}
        m-k_0-k_1=(n_0+n_1-1)-(n_0-1)-(n_1-1)=1,
    \end{equation*}
    and therefore there exists $b\neq 0,\infty,a,-\frac{1}{a}$ such that the Weierstrass data becomes
    \begin{equation*}
            g=\frac{z^{n_0-1}(az+1)^{n_1-1}}{(z-a)^{n_1-1}}\frac{(\overline{b}z+1)}{(z-b)}, \quad \eta=i\frac{(z-a)^{n_1-2}}{z^{n_0}(az+1)^{n_1}}(z-b)^2dz.
    \end{equation*}
    Notice that if $p_j$ denotes any of the ends, then by Theorem \ref{JorgeMeeksHopf}
    \begin{equation*}
        ord_{p_j}Q_H=r_{p_j}-1-\nu_j=(n_j-2)-1-(n_j-1)=-2,
    \end{equation*}
    so the Hopf differential has indeed a pole of order two at each end. Since the ends are parallel we have that $\frac{dg}{g}$ has a simple pole with real residue at each end, indeed
    \begin{equation*}
        \frac{dg}{g}=\frac{n_0-1}{z}+a\frac{n_1-1}{az+1}+\frac{\overline{b}}{\overline{b}z+1}-\frac{n_1-1}{z-a}-\frac{1}{z-b}.
    \end{equation*}
    On the other hand, the height differential also has a simple pole at each end
    \begin{equation*}
        dh=i\frac{(z-b)(\overline{b}z+1)}{z(z-a)(az+1)}.
    \end{equation*}
    This implies, in the particular case of parallel ends, that the residue conditions of the immersion for $\gamma$ is equivalent to the condition on the Hopf differential to have closed curvature lines around the ends, more precisely
    \begin{equation*}
        \gamma(0),\gamma(a)\in \bR\iff \lim_{z\to 0}z^2Q_H,\ \lim_{z\to a}(z-a)^2Q_H\in \bR\quad \textit{(for parallel ends)}.
    \end{equation*}
  Since by assumption both conditions are satisfied, and since $a\in \bR$ these two conditions imply that
  \begin{equation*}
      b\in i\bR^{*},\quad (a-b)(\overline{b}a+1)\in i\bR^{*}.
  \end{equation*}
  Writing explicitly the real and imaginary parts, we check that the only solutions to these equations are $b=\pm i$, \textit{i.e.} $b$ is purely imaginary with $\overline{b}=-b$ in any case. Notice that
  \begin{equation*}
      gdh=i\frac{z^{n_0-2}(az+1)^{n_1-2}}{(z-a)^{n_1}}(-bz+1)^2,\quad \frac{dh}{g}=i\frac{(z-a)^{n_1-2}(z-b)^2}{z^{n_0}(az+1)^{n_1}}.
  \end{equation*}
So we see that
\begin{equation*}
    \alpha(a)=0,\quad \beta(0)=0.
\end{equation*}
Therefore the residue conditions from Lemma \ref{simplifiedresidueproblem} imply that
\begin{equation*}
    \alpha(0)=0,\quad \beta(a)=0.
\end{equation*}
Since $gdh$ has a pole at $z=a$ of order $n_1$ we can calculate its residue using the well known identity 
\begin{equation*}
    0=\beta(a)=\eval{Res}_{z=a}gdh=\frac{1}{(n_1-1)!}\eval{\left(\frac{d}{dz}\right)^{n_1-1}}_{z=a}(z-a)^{n_1}gdh.
\end{equation*}
Taking the real part of this equation and using the fact that $b\in i\bR^*$ we obtain
\begin{equation*}
    0=\frac{\pm2}{(n_1-1)!}\eval{\left(\frac{d}{dz}\right)^{n_1-1}}_{z=a}\sum_{k=0}^{n_1-2}\binom{n_1-2}{k}a^kz^{n_0-1+k}.
\end{equation*}
Since
\begin{equation*}
    \eval{\left(\frac{d}{dz}\right)^{n_1-1}}_{z=a}z^{n_0-1+k}=\begin{cases}
        0,\quad \text{if}\quad k<n_1-n_0,\\ \frac{(n_0-1+k)!}{(n_0-n_1+k)!}a^{n_0-n_1+k},\quad \text{if}\quad n_1-n_0\leq k,
    \end{cases}
\end{equation*}
we have that
\begin{equation}\label{sumation}
    0=\sum_{\max\{(n_1-n_0),0\}}^{n_1-2}\binom{n_1-2}{k}\frac{(n_0-1+k)!}{(n_0-n_1+k)!}a^{n_1-n_0+2k}.
\end{equation}
Notice that this sum has always at least one term independently on the choices of $n_0,n_1\geq 2$. Also observe that since $n_0+n_1=m+1$ with $m$ odd, we have that $n_1-n_0=n_1+n_0-2n_0$ has to be even. This implies that all the powers of $a$ in the sum \eqref{sumation} are even. Consequently all the terms in the sum \eqref{sumation} are strictly positive and we arrive at a contradiction. Therefore there are no complete minimal immersions of twice-punctured projective spaces with parallel ends and closed curvature lines around its ends.
\end{proof}
\begin{cor}
    The Zhang's non-orientable minimal immersions of twice-punctured projective spaces in $\bR^3$ do not belong to the class $\cC$.
\end{cor}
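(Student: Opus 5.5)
The corollary should follow at once from Theorem \ref{Nonexistencetwopuncturesclosed}, once we check that Zhang's surfaces satisfy its hypotheses. I will first recall what Zhang's family \cite{Zhang} consists of. These are the complete minimal immersions of $\mathbb{RP}^2\setminus\{p_1,p_2\}$ into $\bR^3$ with
\begin{itemize}
\item finite total curvature $-10\pi$,
\item parallel ends,
\item $\max\{mult_a(g),mult_b(g)\}=2$.
\end{itemize}
They are therefore twice-punctured projective spaces with parallel ends and finite total curvature. Each member has Weierstrass data of the form \eqref{Weierstrasstwopunctures} with $m=5$ and the ends of the double cover at $0,\infty,a,-\frac{1}{a}$.

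Next I argue by contradiction. Suppose some Zhang surface belonged to $\cC$. By Definition \ref{classC}, together with the remark extending it to non-orientable surfaces via the orientable double cover, the double cover would have closed curvature lines around each of its ends. Then $X'$ would have closed curvature lines around both of its ends $p_1,p_2$.

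Such an $X'$ is exactly an immersion of the kind ruled out by Theorem \ref{Nonexistencetwopuncturesclosed}. That theorem holds for every total curvature $-2\pi m$ and every choice of $n_0,n_1$, so it covers $m=5$ in particular. This gives the contradiction.

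The one point that needs care is confirming that "parallel ends" in Zhang's construction means the same thing as in the proof of Theorem \ref{Nonexistencetwopuncturesclosed}. There it is used to say that, after a rotation, the Gauss map has a zero or pole at each end, i.e.\ $k_0,k_1\geq1$. For Zhang's family this holds because the limiting normals at the ends are all vertical. So I would simply cite Zhang's normalization of the data and note that it matches this setup; the rest of the argument is formal.
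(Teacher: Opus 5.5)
Your proposal is correct and matches the paper, which states the corollary without separate proof as an immediate consequence of Theorem \ref{Nonexistencetwopuncturesclosed}: Zhang's surfaces are twice-punctured projective spaces of finite total curvature $-10\pi$ with parallel ends, so membership in $\cC$ would contradict that theorem. Your extra check that ``parallel ends'' yields $k_0,k_1\geq 1$ after a rotation is exactly the normalization made at the start of that theorem's proof.
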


We are able to produce other one parameter families of complete minimal immersions deforming the Oliveira's surface applying a symmetry principle in order to solve the period problem.
\begin{thm}\label{CarlosTFirst}
   There exist $r,R\in \bR^{+}$ such that for all $\lambda\in (0,r)\cup(R,+\infty)$  the Weierstrass data given by
\begin{equation*}
  M=\bS^2\setminus \{0,\infty,1,-1\},\quad g_{\lambda}(z)=\frac{\left(\overline{p_{\lambda}}^3z^3+1\right)\left(-\overline{q}_{\lambda}^2z^2+1\right)}{\left(z^3-p_{\lambda}^3\right)\left(z^2-q_{\lambda}^2\right)} \quad \eta_{\lambda}=i\frac{\left(z^3-p_{\lambda}^3\right)^2\left(z^2-q_{\lambda}^2\right)^2}{z^2(z^2-1)^4}dz,
\end{equation*}
where
\begin{gather*}
     p_{\lambda}^3=-i\lambda,\quad  q_{\lambda}^2=\frac{-10(1+\lambda^2)\pm\sqrt{P(\lambda)}}{2(1-35\lambda^2)}\in \bR,\quad P(\lambda)=240\lambda^4-4704\lambda^2+240. 
\end{gather*}
defines two families of complete minimal immersions $X'^{\pm}_{\lambda}:\mathbb{RP}^2\setminus\{[0],[1]\}\to \bR^3$ of total curvature $-10\pi$ with an embedded planar end at $[0]$ and an immersed end of Enneper type at $[1]$, whose asymptotic planes are orthogonal to each other. Up to equivalence, $X'^{\pm}_{0,\infty}$ are the Oliveira's two-ended Möbius bands. Moreover the lines of curvature of $X'^{\pm}_{\lambda}$ are not closed around any of the two ends.
\end{thm}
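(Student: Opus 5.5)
The plan is to check that the given data fit the normal form of Theorem \ref{Puncturedprojectivespaces}, reduce the period problem via Lemma \ref{simplifiedresidueproblem}, and then read off the geometry of the ends from orders and residues.

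\textbf{Step 1: normal form.} The data should correspond to Theorem \ref{Puncturedprojectivespaces} with $N=2$, $a=1$ (so the punctures of the double cover are $0,\infty,1,-1$), $m=5$, $k_0=k_1=0$, $n_0=2$, $n_1=4$ and $l=5$. The five points $b_j$ are the three cube roots of $p_\lambda^3=-i\lambda$ together with $\pm q_\lambda$. Directly from the formulas I will check the transformation law \eqref{transformationalpropertiesWeierstrass} for $\tau=\cJ$, that is $g\circ\cJ=\cJ\circ g$ and $\cJ^*\eta=-\overline{g^2\eta}$.

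\textbf{Step 2: immersion condition and total curvature.} The immersion condition \eqref{immersion condition} holds exactly when the $b_j$ are pairwise distinct and avoid $0,\pm1$, and when $b_j\overline{b_k}\neq-1$. These requirements exclude only finitely many values of $\lambda$, together with the values where $q_\lambda^2\notin\bR$ or $q_\lambda^2\in\{0,1\}$. Since $\deg g=5$, the total curvature is $-10\pi$.

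\textbf{Step 3: the period problem.} By Lemma \ref{simplifiedresidueproblem} it suffices to impose
\begin{equation*}
\gamma(0),\gamma(1)\in\bR,\qquad \alpha(0)+\overline{\beta(0)}=0,\qquad \alpha(1)+\overline{\beta(1)}=0.
\end{equation*}
I will exploit the symmetries of the data, coming from $p_\lambda^3\in i\bR$ and $q_\lambda^2\in\bR$: these are $z\mapsto\overline z$ composed with a suitable sign change, and $z\mapsto -z$. The goal is to show that all the conditions except one are automatic.
\begin{itemize}
\item At $0$, $\eta$ has a double pole and $g(0)=1/(p_\lambda^3q_\lambda^2)$ is finite and nonzero. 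The conditions there should reduce to parity considerations, and should in particular give $\gamma(0)=0$.
\item At $1$, where $\eta$ has a pole of order $4$, I expect the residues to be computed by expanding $(z-1)^4gdh$ and $(z-1)^4dh/g$ to third order. After clearing denominators, the symmetry should make $\gamma(1)$ real automatically and leave a single real equation. I expect this equation to be quadratic in $q^2$:
\begin{equation*}
(1-35\lambda^2)q^4+10(1+\lambda^2)q^2+\cdots=0,
\end{equation*}
whose discriminant is $P(\lambda)$.
\end{itemize}
Solving gives the stated $q_\lambda^2$. Since $P(0)=240>0$ and $P(\lambda)\sim240\lambda^4$ as $\lambda\to\infty$, the root is real for $\lambda\in(0,r)\cup(R,\infty)$. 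Shrinking $r$ and enlarging $R$ also keeps clear of the finitely many bad values from Step 2. The main obstacle is this order-four residue computation at $z=1$, together with verifying that it really collapses to one quadratic. I would organize it by writing everything in terms of $p^3$ and $q^2$ only, and letting the symmetry kill the imaginary parts.

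\textbf{Step 4: geometry of the ends.}
\begin{itemize}
\item At $[0]$, $g(0)\neq0,\infty$ and $\eta$ has a pole of order $2$, so $\nu=1$ and the end is embedded. Since $\gamma(0)=0$, Remark \ref{catenoidalvsplanar} shows it is planar.
\item At $[1]$, $\eta$ has a pole of order $4$ and $g(1)$ is finite and nonzero, so $\nu=3$: an Enneper-type end.
\item The orthogonality of the limit normals is a direct check that $g(0)$ and $g(1)$ correspond to orthogonal unit vectors under stereographic projection; this uses $p^3\in i\bR$.
\item As $\lambda\to0$ or $\lambda\to\infty$, the data degenerate to Oliveira's Weierstrass data after a Möbius normalization.
\end{itemize}

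\textbf{Step 5: curvature lines.} The curvature lines are not closed around $[0]$ by Theorem \ref{PlanarEndsDoNotAdmitClosedLines}. At $[1]$, Theorem \ref{JorgeMeeksHopf} gives
\begin{equation*}
ord\,Q_H=r_1(g)-\nu-1=r_1(g)-4.
\end{equation*}
By Corollary \ref{CriterioclassC}, closed lines would require $r_1(g)=2$, that is $g'(1)=g''(1)=0$. For our parameters $g'(1)\neq0$, which is checked directly except possibly at isolated values of $\lambda$ that can be excluded. Hence $Q_H$ has a pole of order $4$ at $[1]$, and we are in case (c) of Theorem \ref{dynamicslinescurvature}.
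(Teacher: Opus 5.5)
Your plan is essentially the paper's proof. It uses the same normal form ($k_0=k_1=0$, $(n_0,n_1)=(2,4)$, with the $b_j$ the cube roots of $p^3$ together with $\pm q$) and the same reduction through Lemma \ref{simplifiedresidueproblem}: once $p^3\in i\bR$ and $q^2\in\bR$, the condition $\gamma(1)\in\bR$ is automatic and $\alpha(1)+\overline{\beta(1)}=0$ becomes $(1-35\lambda^2)q^4+10(1+\lambda^2)q^2-(35-\lambda^2)=0$, whose discriminant is $P(\lambda)$. The ends are then read off in the same way from $\nu$, $\gamma(0)=0$, $g(0)\in i\bR$, $g(1)=1$ and $g'(1)\neq 0$.

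Two refinements are worth recording.

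\textbf{The symmetry.} Under the ansatz $p^3\in i\bR$, $q^2\in\bR$, the symmetry that actually explains the collapse at $z=1$ is $z\mapsto 1/z=\cJ(-\bar z)$. It sends $g$ to $1/g$ and $\eta$ to $-g^2\eta$, which forces $\gamma(1)=0$ and $\alpha(1)=-\beta(1)$. The map $z\mapsto -z$ is not a symmetry of the data.

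\textbf{Shrinking $r$ and enlarging $R$.} This precaution is genuinely needed, and the paper's proof does not address it. On the minus branch, $q_\lambda^2$ blows up at $\lambda=1/\sqrt{35}<r$. Also on the minus branch, at $\lambda=\bigl(\tfrac{13-\sqrt{165}}{2}\bigr)^{3/2}$ one has $\lambda^{2/3}q_\lambda^2=-1$. There a zero and a pole of $g_\lambda$ cancel and a branch point appears, so the immersion condition fails.
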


\begin{figure}
     \centering         \includegraphics[width=0.4\textwidth]{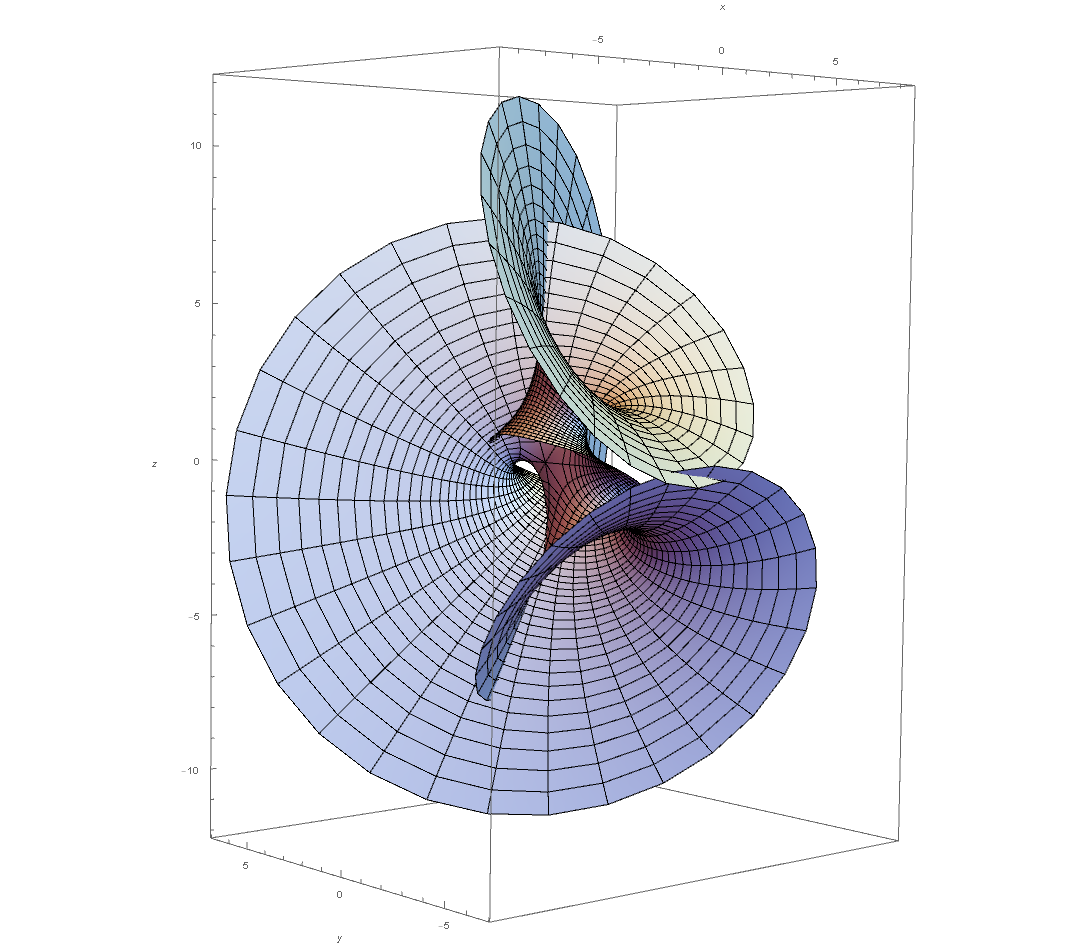}
         \caption{Surface from Theorem \ref{CarlosTFirst}}
         \label{Carlos T first surface}
     \end{figure} 
\begin{proof}

We start by fixing $a=1$ and $k_0=k_1=0$ and $(n_1,n_2)=(2,4)$ in equation \eqref{Weierstrasstwopunctures}. Therefore $m=5$ and $m-k_0-k_1=5$ so we have 
\begin{gather*}
    g=\prod_{j=1}^{5}\frac{(\overline{b_j}z+1)}{(z-b_j)}, \quad \eta=i\frac{\prod_{j=1}^{5}(z-b_j)^2}{z^{2}(z^2-1)^{4}}dz.
\end{gather*}
In order to solve the period problems, suppose that we divide the points $\{b_j\}_{j=1}^5$ into two groups. We choose $b_1,b_2,b_3$ arranged in a symmetrical way as the vertices of an equilateral triangle 
\begin{equation*}
    b_1=p,\quad b_2=pe^{\frac{2\pi i}{3}},\quad b_3=pe^{\frac{4\pi i}{3}},\quad p\in \bC \setminus \{-1,0,1\}.
\end{equation*}
Whereas we choose
\begin{equation*}
    b_4=q,\quad b_5=-q,\quad  q\in \bC \setminus \{-1,0,1\}.
\end{equation*}
Notice that the surface will have a horizontal tangent plane exactly at the points $\{b_j,\cJ(b_j)\}_{j=1}^5$.

The choice of $b_1,b_2,b_3$ implies
\begin{equation*}
    (z-b_1)(z-b_2)(z-b_3)=z^3-p^3,\quad (\overline{b_1}z+1)(\overline{b_2}z+1)(\overline{b_3}z+1)=\overline{p}^3z^3+1,
\end{equation*}
and the choice of $b_4,b_5$ implies
\begin{equation*}
    (z-b_4)(z-b_5)=z^2-q^2,\quad (\overline{b_4}z+1)(\overline{b_5}z+1)=-\overline{q}^2z^2+1.
\end{equation*}
Therefore
\begin{equation*}
   g(z)=\frac{\left(\overline{p}^3z^3+1\right)\left(-\overline{q}^2z^2+1\right)}{\left(z^3-p^3\right)\left(z^2-q^2\right)} \quad \eta=i\frac{\left(z^3-p^3\right)^2\left(z^2-q^2\right)^2}{z^2(z^2-1)^4}dz.
\end{equation*}
After expanding out the terms and using the residue calculation tool \cite[Lemma 3.1 and Lemma 3.2]{KatoHamadaVariousTypesEnds}, we obtain
\begin{equation*}
    \alpha(0)=0,\quad \beta(0)=0,\quad \gamma(0)=0,
\end{equation*}
and 
\begin{equation*}
    \alpha(1)=-\frac{i}{32}\left(35p^6q^4-10p^6q^2-p^6+q^4+10q^2-35\right),
\end{equation*}
\begin{equation*}
    \beta(1)=\frac{i}{32}\left(35\overline{p}^6\overline{q}^4-10\overline{p}^6\overline{q}^2-\overline{p}^6+\overline{q}^4+10\overline{q}^2-35\right),
\end{equation*}
\begin{equation*}
    \gamma(1)=\frac{i}{32}\left(-35p^3q^2+5p^3(1+|q|^4)+p^3\overline{q}^2+\overline{p}^3q^2+5\overline{p}^3(1+|q|^4)-35\overline{p}^3\overline{q}^2\right).
\end{equation*}
Clearly the conditions $\gamma(0)\in \bR$ and $\beta(0)+\overline{\alpha}(0)=0$ are satisfied. Accordingly with Lemma \ref{simplifiedresidueproblem} the period problem is solved if and only if $\gamma(1)\in \bR$ and $\beta(1)+\overline{\alpha}(1)=0$. We then have the equations
\begin{gather}\label{residueequationsCarlosFirst}
    (35p^6+1)q^4+10(1-p^6)q^2-(35+p^6)=0,\quad Re\left(p^3\left(\overline{q}^2+5(1+|q|^4)-35q^2\right)\right)=0.
\end{gather}
Let us describe one solution to this system of equations. Set
\begin{equation*}
    p^3=-i\lambda,
\end{equation*}
and consider the polynomial
\begin{equation*}
    P(\lambda)=240\lambda^4-4704\lambda^2+240,
\end{equation*}

This polynomial is even and has four distinct real roots at the points $-R<-r<0<r<R$ with $R=\frac{1}{r}$. For $\lambda\in (0,r)\cup (R,+\infty)$ we then have $P(\lambda)\geq 0$. Therefore for this choice of $\lambda$ we have by the first equation of \eqref{residueequationsCarlosFirst} that
\begin{equation*}
    q^2=\frac{-10(1+\lambda^2)\pm\sqrt{P(\lambda)}}{2(1-35\lambda^2)}\in \bR,
\end{equation*}
and we see that the second equation \eqref{residueequationsCarlosFirst} is also satisfied. Finally when $\lambda\to 0$ we have
     \begin{equation*}
        p\to 0,\quad  q^2\to \frac{-10\pm \sqrt{240}}{2},\quad g\to \frac{-q^2z^2+1}{z^3(z^2-q^2)},\quad \eta\to i\frac{z^6(z^2-q^2)^2}{z^2(z^2-1)^4}dz,
     \end{equation*}
     which is up to reparametrization, rigid motion and homothety the Oliveira's two ended Möbius band. Similarly when $\lambda\to +\infty$ we have
     \begin{equation*}
         p\to -i\infty,\quad \frac{1}{q^2}\to \frac{-10\mp \sqrt{240}}{2},\quad g\to -\frac{z^3(z^2-\frac{1}{q^2})}{\frac{z^2}{q^2}-1},\quad \eta\to (-\lambda^2q_{\lambda}^4)i\frac{\left(\frac{z^2}{q^2}-1\right)^2}{z^2(z^2-1)^4} ,
     \end{equation*}
     which is up to a scaling factor $\lambda^2q^4\in \bR^{+}$ and a reflection with respect to the origin the Oliveira's surfaces.
     
     The choice $(n_1,n_2)=(2,4)$ in the Weierstrass data implies by Theorem \ref{embeddedends} and   Corollary \ref{jorgemeeksmultiplicitiespunctureprojectivespaces} that the end corresponding to $[0]$ is embedded and $[1]$ is immersed of Enneper type. Since $\gamma(0)=0$, which is the residue of the height differential at $z=0$, we conclude that the embedded end is planar by Remark \ref{catenoidalvsplanar}. The asymptotic behavior of the Gauss map at the ends is
     \begin{equation*}
         \lim_{z\to0}g_{\lambda}(z)=\frac{1}{p_{\lambda}^3q_{\lambda}^2}\in i\bR,\quad \lim_{z\to 1} g_{\lambda}(z)=\frac{(1+\overline{p}_{\lambda}^3)(1-\overline{q}_{\lambda}^2)}{(1-p_{\lambda}^3)(1-q_{\lambda}^2)}=1\in \bR,
     \end{equation*}
     which implies that the asymptotic planes to the ends are orthogonal to each other. Finally, the planar end $[0]$ does not admit closed curvature lines by Theorem \ref{PlanarEndsDoNotAdmitClosedLines}. An analysis of the behavior of $q_\lambda^2$ and a calculation of the derivative of the Gauss map at the end $[1]$ shows that
     \begin{equation*}
        q_{\lambda}^2\in \bR\setminus \{\pm 1\},\quad  g'_{\lambda}(1)=\frac{(q_{\lambda}^2-5)-i\lambda(5q_{\lambda}^2-1) }{(1-q_{\lambda}^2)(1-p_{\lambda}^3)}\neq 0,\quad 
     \end{equation*}
     which implies that $r_{[1]}\mathfrak{g}=0$. Since the Jorge-Meeks multiplicity of the end $[1]$ is equal to $3$, we have by Theorem \ref{JorgeMeeksHopf}, that the Hopf differential has a pole of order $4$ at the end $[1]$, so by Theorem \ref{dynamicslinescurvature} the lines of curvature spiral at this end.  
\end{proof}
\begin{rem}
\normalfont{
     It is interesting to notice that $R=\frac{1}{r}$ and that when $\lambda\to r$ or $\lambda\to R$, since $P(\lambda)\to 0$, we see that the surfaces $X^{\pm}_r,X^{\pm}_{R}$ coincide. We can also see that starting at one Oliveira's surface at $\lambda\to 0^{+}$ we go to $X_r$ and since this surface is $X_R$ we can continue the deformation for $\lambda$ from $R$ to infinity arriving at the other Oliveira's surface. This same behavior is observed in the Toubiana family, so it could be that the two constructions are equivalent.}
\end{rem}

Now we prove Theorem \ref{secondsurface}, which produces still another deformation of Oliveira's surface with a single closed curvature line around exactly one of the two ends:

\begin{thm}\label{CarlosTSecond}
The Weierstrass data given by
\begin{gather*}
M=\bS^2\setminus \{0,\infty,1,-1\},\quad g_{\lambda}(z)=\frac{z(1-\overline{p}_{\lambda}^2z^2)(1-\overline{q}_{\lambda}^2z^2)}{(z^2-p_{\lambda}^2)(z^2-q_{\lambda}^2)},\quad     \eta_{\lambda}=i\frac{(z^2-p_{\lambda}^2)^2(z^2-q_{\lambda}^2)^2}{z^{2}(z^2-1)^{4}}dz ,
\end{gather*}
with
\begin{gather*}
q_{\lambda}^2=\frac{i\lambda}{p_{\lambda}^2},\quad   Q(\lambda)=\frac{-(2+10i\lambda)\pm\sqrt{(2+10i\lambda)^2-4(35\lambda^2+2i\lambda+5)}}{2},
\end{gather*}
and
\begin{gather*}
p_{\lambda}^2=\frac{Q(\lambda)+ \sqrt{Q(\lambda)^2-4i\lambda}}{2}\ \text{for $X^{\pm}_{\lambda}$},\quad p_{\lambda}^2=\frac{Q(\lambda)-\sqrt{Q(\lambda)^2-4i\lambda}}{2}\ \text{for $Y^{\pm}_{\lambda}$}.
\end{gather*}
defines four families of complete minimal immersions $X'^{\pm}_{\lambda},Y'^{\pm}_{\lambda}:\mathbb{RP}^2\setminus\{[0],[1]\}\to \bR^3$ of total curvature $-10\pi$ with an embedded catenoidal end at $[0]$ foliated by closed curvature lines, and an immersed end of Enneper type at $[1]$, whose asymptotic planes are orthogonal to each other. Moreover the surfaces $X'^-_{\lambda}$ and $Y'^+_{\lambda}$ converge up to equivalence to the two Oliveira's surfaces respectively.
\end{thm}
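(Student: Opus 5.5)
The plan is to follow the same scheme as the proof of Theorem \ref{CarlosTFirst}. First we recognize the data as a case of \eqref{Weierstrasstwopunctures} with $a=1$, $k_0=1$, $k_1=0$, $(n_0,n_1)=(2,4)$, so $m=5$ and $m-k_0-k_1=4$. The four points $b_j$ are $\pm p$ and $\pm q$, which gives $\prod(z-b_j)=(z^2-p^2)(z^2-q^2)$ and $\prod(\overline{b_j}z+1)=(1-\overline{p}^2z^2)(1-\overline{q}^2z^2)$. By Theorem \ref{Puncturedprojectivespaces} the transformational properties \eqref{transformationalpropertiesWeierstrass} under $\cJ$ then hold automatically, as does the immersion condition, provided $p^2,q^2\notin\{0,1\}$ and the $b_j$ avoid $-1/\overline{b_k}$. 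It therefore suffices to solve the period problem through Lemma \ref{simplifiedresidueproblem}.

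Since $g,\eta$ are invariant or anti-invariant under $z\mapsto -z$, I expect the residues at $z=0$ to be computable by hand. We have $dh=g\eta$ with a simple pole at $0$ and residue $i\,p^2q^2\cdot(\text{const})$, which up to a sign is $i p^2q^2$. Also $g\,dh$ is holomorphic at $0$, so $\beta(0)=0$. Finally $dh/g=\eta/g$ has a pole of order $3$ at $0$, and it is even in $z$, so $\alpha(0)=0$. The condition $\gamma(0)\in\bR$ is then exactly $p^2q^2\in i\bR$, which explains the normalization $q^2=i\lambda/p^2$. It also makes $\gamma(0)\neq 0$, so the end $[0]$ is catenoidal by Remark \ref{catenoidalvsplanar}.

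At $z=1$ the poles have order up to $5$. Here I would compute $\alpha(1),\beta(1),\gamma(1)$ with the residue tool of \cite{KatoHamadaVariousTypesEnds}, exactly as in Theorem \ref{CarlosTFirst}. The result will be polynomials in the symmetric quantities $s=p^2+q^2$ and $t=p^2q^2=i\lambda$. The main obstacle is to show that the two remaining conditions, $\gamma(1)\in\bR$ and $\alpha(1)+\overline{\beta(1)}=0$, reduce after substituting $t=i\lambda$ to a single quadratic in $s$, namely $s^2+(2+10i\lambda)s+(35\lambda^2+2i\lambda+5)=0$, whose roots are $s=Q(\lambda)$. I expect one of the two conditions to hold identically once $t\in i\bR$, as happened with the second equation of \eqref{residueequationsCarlosFirst}. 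Then $p^2,q^2$ are the roots of $w^2-Q w+i\lambda=0$, and the two choices of root give the families $X^{\pm}$ and $Y^{\pm}$. One must also check, for $\lambda$ in a suitable range, that $p^2,q^2\neq 0,\pm1$ and that there are no coincidences among the $b_j$.

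Next come the end and curvature-line properties. By Corollary \ref{jorgemeeksmultiplicitiespunctureprojectivespaces} we get $\nu_{[0]}=1$ and $\nu_{[1]}=3$, so $[0]$ is embedded and $[1]$ is of Enneper type. At $[0]$ we have $g(0)=0$ with multiplicity one, so $r_{[0]}=0$ and Theorem \ref{JorgeMeeksHopf} gives $\mathrm{ord}_{0}Q_H=-2$. Since $dg/g$ has residue $1$ and $dh$ has residue $\gamma(0)\in\bR^*$, the quadratic limit is $-\tfrac12\gamma(0)\in\bR^*$. Theorem \ref{dynamicslinescurvature}(b.1) then shows that one foliation consists of closed curves around $[0]$. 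For the ends, $g(0)=0$, while the symmetry $g(\overline z)=\overline{g}$ type relations should give $g(1)$ real and of modulus one, up to the choice of branch. This places the normals as vertical at $[0]$ and horizontal at $[1]$, so the asymptotic planes are orthogonal. The total curvature is $-2\pi m=-10\pi$.

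Finally, for the limit, letting $\lambda\to 0$ gives $t\to 0$ and $Q\to -1\pm 2i$ or a similar value. For $X^-$ and $Y^+$ the root choice sends one of $p^2,q^2$ to $0$, so $g$ tends to $z^3(1-\overline{c}z^2)/(z^2-c)$ up to normalization and $\eta$ tends accordingly. This is the Oliveira data seen in the proof of Theorem \ref{CarlosTFirst}, after a reflection or homothety.
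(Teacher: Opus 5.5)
Apart from the final limit, your plan follows the paper. You solve the period problem through Lemma \ref{simplifiedresidueproblem} as in Theorem \ref{CarlosTFirst}. You then apply Corollary \ref{CriterioclassC} at $z=0$ with quadratic limit $-\tfrac12\gamma(0)=\lambda/2$, and use $g(0)=0$, $|g(1)|=1$ for orthogonality. The reduction you predict at $z=1$ does hold. The symmetry $z\mapsto -z$, together with Lemma \ref{alphabetagammarelatedpoints} and the residue theorem, gives $\gamma(1)=\mathrm{Im}(p^2q^2)\in\bR$ identically. A direct computation gives $\alpha(1)=\overline{\beta(1)}=\frac{i}{32}\left(s^2+2s+5+2t+10st-35t^2\right)$, which for $t=i\lambda$ is exactly the quadratic defining $Q(\lambda)$. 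There are two small slips. First, $dh/g$ is $\eta$ itself, which has a double pole at $0$ and is even in $z$; it is not $\eta/g$, which is odd and would spoil your parity argument. Second, $|g(1)|=1$ holds simply because the numerator and denominator of $g(1)$ are complex conjugates; $g(1)$ itself need not be real.

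The genuine gap is the convergence to Oliveira's surfaces: $\lambda\to 0$ is the wrong regime. Since $p^2q^2=i\lambda$, for every branch one of $p^2,q^2$ tends to $0$ and the other to $c=Q(0)=-1\pm 2i$. Then $(1-\overline{p}^2z^2)/(z^2-p^2)\to z^{-2}$, not $z^2$, so
\[ g_\lambda\to\frac{1-\overline{c}z^2}{z(z^2-c)},\qquad \eta_\lambda\to i\,\frac{z^2(z^2-c)^2}{(z^2-1)^4}\,dz. \]
The degree of the Gauss map drops to $3$, and $z=0$ becomes a regular point: a double zero of $\eta$ over a simple pole of $g$, consistent with $\gamma(0)=-\lambda\to 0$. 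So the limit is a one-ended M\"obius band of total curvature $-6\pi$, not a two-ended surface of total curvature $-10\pi$. Moreover $c$ is not real, unlike the parameter in Oliveira's data.

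You need $\lambda\to+\infty$, as the paper does. Substituting $Q=i\lambda c$ into the quadratic and dividing by $\lambda^2$ gives $Q(\lambda)/(i\lambda)\to c_\pm=(-10\pm\sqrt{240})/2$, the roots of $c^2+10c-35=0$. Then one root of $w^2-Q(\lambda)w+i\lambda=0$ grows like $i\lambda c_\pm$, while the other tends to $1/c_\pm\in\bR$. For the diverging root, $\overline{q}^2/q^2\to-1$, so $(1-\overline{q}^2z^2)/(z^2-q^2)\to -z^2$; this is what produces the factor $z^3$ you wanted. Also, $\eta_\lambda$ must be divided by $q^4\sim-\lambda^2c_\pm^2$, i.e.\ a homothety composed with a reflection through the origin. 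Up to a rigid motion, the result is the Oliveira data that appear as the $\lambda\to\infty$ limit in Theorem \ref{CarlosTFirst}.
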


\begin{figure}
     \centering
     \begin{subfigure}[b]{0.25\textwidth}
         \centering
         \includegraphics[width=\textwidth]{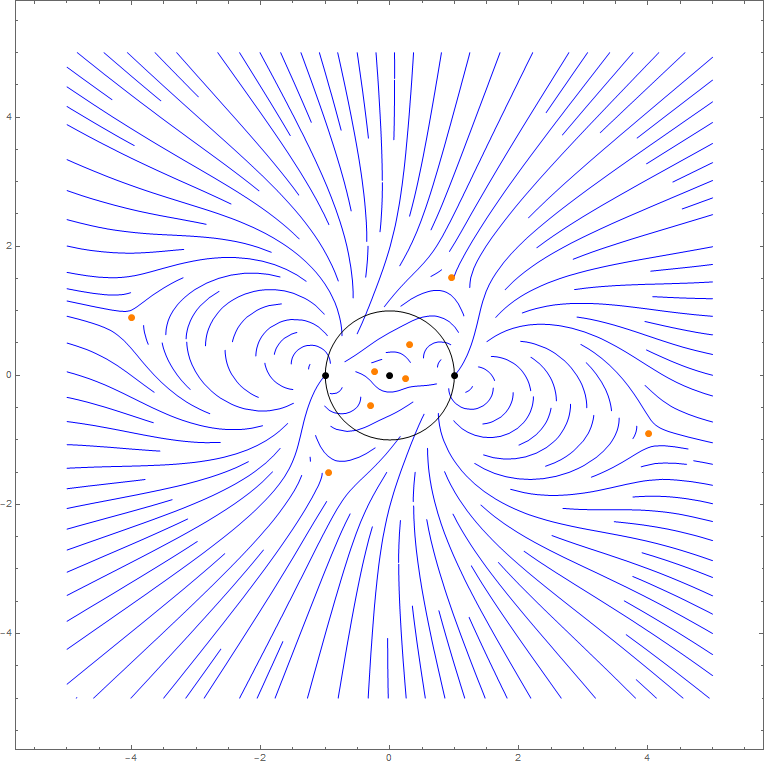}
         \caption{General view at a large scale. }
         \label{Carlos T second surface curvature lines}
     \end{subfigure}
     \hfill
     \begin{subfigure}[b]{0.38\textwidth}
         \centering
         \includegraphics[width=\textwidth]{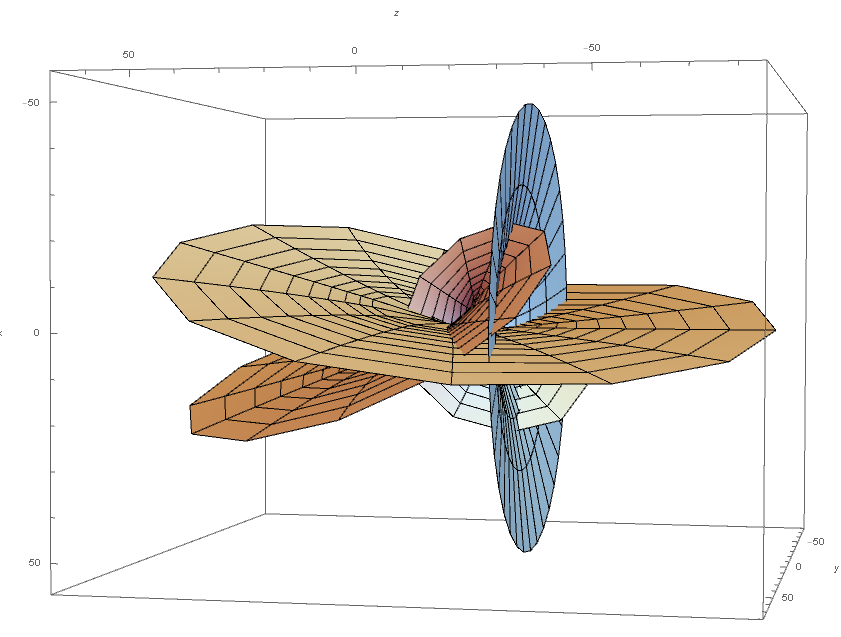}
         \caption{The Surface from Theorem \ref{Carlos T second surface}. }
         \label{Carlos T second surface}
     \end{subfigure}
     \hfill
     \begin{subfigure}[b]{0.25\textwidth}
         \centering
\includegraphics[width=\textwidth]{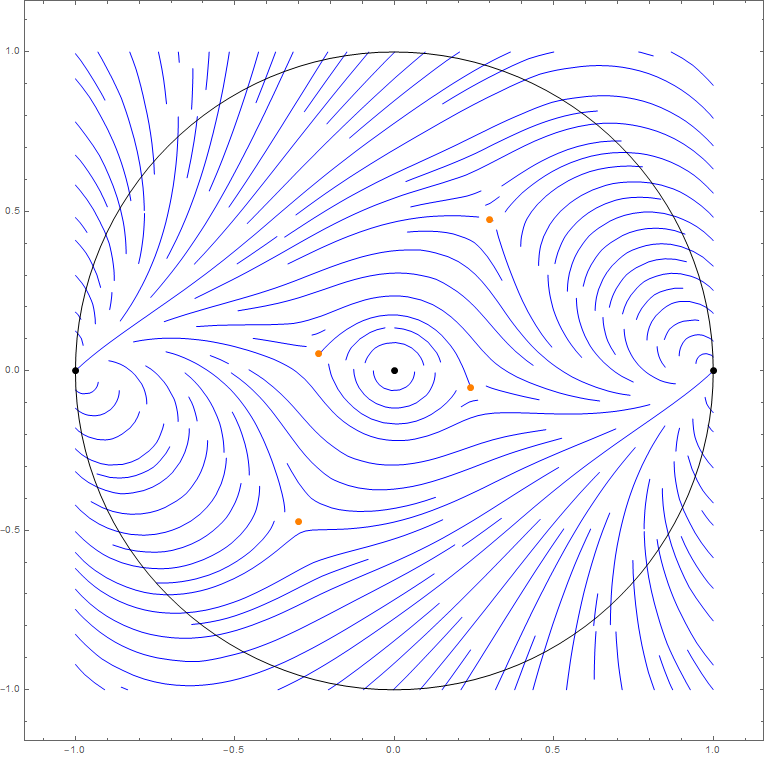}
         \caption{A closer look at the closed curvature line.}
         \label{fig:closer look Carlos T second surface curvature lines}
     \end{subfigure}
     \caption{The surface of Theorem \ref{CarlosTSecond} and its maximal principal foliation.}
     \end{figure}
     
\begin{proof}
The period condition is verified using the same strategy as in the proof of Theorem \ref{CarlosTFirst}. We next show using Corollary \ref{CriterioclassC} that the lines of curvature around the end corresponding to $z=0$ are indeed closed. On one hand $\frac{dg}{g}$ has a simple pole with residue $1$ at $z=0$ and on the other hand the height differential
    \begin{equation*}
        dh=i\frac{(1-\overline{p}^2z^2)(1-\overline{q}^2z^2)(z^2-p^2)(z^2-q^2)}{z(z^2-1)^4},
    \end{equation*}
    has also a simple pole at $z=0$ with residue $ip^2q^2=-\lambda\in \bR^*$, which implies that the Hopf differential has a pole of order two at $z=0$ with
\begin{equation*}
    \lim_{z\to 0}z^2\phi(z)=-\frac{ip^2q^2}{2}=\frac{\lambda}{2}\in \bR^{*},
\end{equation*}
and therefore we have a principal cycle around the embedded end $z=0$ which is catenoidal since the residue of the height differential does not vanish at that point. Finally we see that
\begin{equation*}
   \lim_{\lambda\to +\infty}\frac{Q(\lambda)}{i\lambda}=\frac{-10\pm \sqrt{240}}{2},
\end{equation*}
which implies for $X_{\lambda}^-$
\begin{equation*}
    \lim_{\lambda\to +\infty}\frac{1}{p_{\lambda}^2}=\frac{-10-\sqrt{240}}{2},\quad \lim_{\lambda\to +\infty}\frac{1}{q_{\lambda}^2}=0,
\end{equation*}
and for $Y_{\lambda}^+$
\begin{equation*}
    \lim_{\lambda\to +\infty}\frac{1}{p_{\lambda}^2}=\frac{-10+\sqrt{240}}{2},\quad \lim_{\lambda\to +\infty}\frac{1}{q_{\lambda}^2}=0.
\end{equation*}
After an appropriate scaling at each $\lambda$ and a reflection through the origin, the surfaces $X_{\lambda}^-,Y_{\lambda}^+$ converge to the Oliveira's two-ended Möbius bands respectively for large $\lambda\to +\infty$. The asymptotic planes to the ends are orthogonal since $g_{\lambda}(0)=0$ and $\abs{g_{\lambda}(1)}=1$.
\end{proof}
\begin{rem}
\normalfont{
    A numerical simulation suggest that the ramification order of the Gauss map at the end $[1]$ is zero, which would imply by Theorem \ref{JorgeMeeksHopf} that the Hopf differential has a pole of order $4$ at $[1]$, and therefore the lines of curvature spiral around that end, according to Theorem \ref{dynamicslinescurvature}.}
\end{rem}

The following theorem studies the space of quadratic differential on the fourth punctured sphere with closed curvature lines.
\begin{figure}
    \centering    \includegraphics[width=0.35\linewidth]{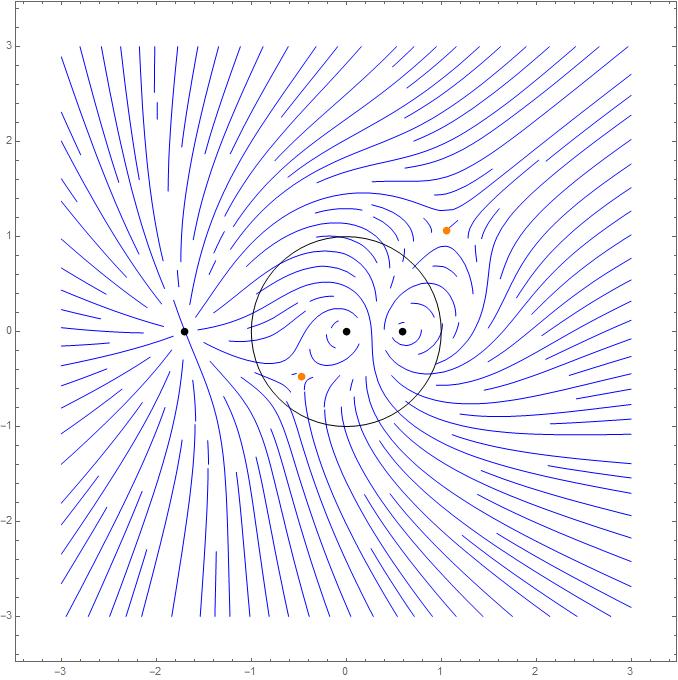}
    \caption{Quadratic differential $Q_{*}$ of Example \ref{Spaceofquadraticdifferentialswithclosedcurvaturelines} with closed curvature lines.}
    \label{fig:quadratic differential with closed curvature lines}
\end{figure}
     \begin{thm}
         Let $\cQ$ be the space of meromorphic quadratic differentials on the fourth punctured sphere $\bS^2\setminus\{0,\infty,a,-\frac{1}{a}\}$, $a\in \bR^{*}$ with possible poles only at the punctures, having the transformational property of Proposition \ref{transformationHopfgeneralized} and closed curvature lines around the punctures. Then 
         \begin{equation*}
    \cQ=\left\lbrace i\lambda \frac{(z-\alpha_1)(z-\alpha_2)(\overline{\alpha_1}z+1)(\overline{\alpha_2}z+1)}{z^2(z-a)^2(az+1)^2}dz^2\mid  \lambda\in \bR,\  \alpha_1\alpha_2,(a-\alpha_1)(a-\alpha_2)(a\overline{\alpha_1}+1) (a\overline{\alpha_2}+1)\in i\bR^{*}\right\rbrace.
\end{equation*}
     \end{thm}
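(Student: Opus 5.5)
The plan is to write a general element of $\cQ$ as a rational function times $dz^2$, and then impose the three conditions one at a time: the allowed poles, the transformation law, and the closed-line condition.

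\textbf{Step 1: pole orders and the numerator.} By Corollary \ref{CriterioclassC}, closed curvature lines around every puncture force a pole of order exactly two at each of $0,\infty,a,-\frac1a$. In the coordinate $w=1/z$ at infinity we have $dz^2=w^{-4}dw^2$. So if $Q=R(z)\,dz^2$ with
\begin{equation*}
R(z)=\frac{P(z)}{z^2(z-a)^2(az+1)^2},
\end{equation*}
then $R(z)$ behaves like $P(z)z^{-6}$ near infinity. A double pole at $\infty$ in $w$ means $R\sim z^{-2}\cdot$const, so $\deg P=4$ exactly. Hence
\begin{equation*}
Q=c\,\frac{(z-\alpha_1)(z-\alpha_2)(z-\alpha_3)(z-\alpha_4)}{z^2(z-a)^2(az+1)^2}\,dz^2,
\end{equation*}
where no $\alpha_j$ equals a puncture (otherwise the pole order drops).

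\textbf{Step 2: the transformation law.} Impose $\cJ^*Q=-\overline Q$ with $\cJ(z)=-1/\bar z$. Pulling back, the zero set of $Q$ must be $\cJ$-invariant. Since $\cJ$ has no fixed points, the four zeros split into pairs $\{\alpha_1,-1/\bar\alpha_1\}$ and $\{\alpha_2,-1/\bar\alpha_2\}$, giving the numerator
\begin{equation*}
(z-\alpha_1)(z-\alpha_2)(\bar\alpha_1z+1)(\bar\alpha_2z+1).
\end{equation*}
I then expect a direct computation to show that the leftover scalar factor $c$ satisfies $\bar c=-c$, up to the real rescaling absorbed into the normalization. This gives $c=i\lambda$ with $\lambda\in\bR$.

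This is where I expect the main bookkeeping obstacle. I would carry out the pullback explicitly, using $d(-1/\bar z)^2=\bar z^{-4}d\bar z^2$ and $a\in\bR$. The denominator transforms as
\begin{equation*}
z^2(z-a)^2(az+1)^2\;\mapsto\;\bar z^{-2}(\bar z\,a+1)^2(\bar z-a)^2\,\bar z^{-4}\cdot(\text{sign}),
\end{equation*}
and the numerator picks up a factor $\bar z^{-4}$ together with conjugated coefficients. I would then check that the powers of $\bar z$ cancel and that a sign $-1$ remains. That sign is exactly what makes $\bar c=-c$, i.e. $c\in i\bR$.

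\textbf{Step 3: the closed-line condition at each puncture.} By Corollary \ref{CriterioclassC}, the quadratic limit must be real and nonzero.

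At $z=0$ the limit is
\begin{equation*}
\lim_{z\to0}z^2R=\frac{i\lambda\,\alpha_1\alpha_2}{a^2}.
\end{equation*}
This is real and nonzero if and only if $\lambda\neq0$ and $\alpha_1\alpha_2\in i\bR^*$.

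At $z=a$ the limit is
\begin{equation*}
\lim_{z\to a}(z-a)^2R=\frac{i\lambda\,(a-\alpha_1)(a-\alpha_2)(a\bar\alpha_1+1)(a\bar\alpha_2+1)}{a^2(a^2+1)^2},
\end{equation*}
which gives the second condition in the statement.

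The remaining punctures $\infty=\cJ(0)$ and $-1/a=\cJ(a)$ need no extra conditions. Corollary \ref{LinesCurvaturerelatedpoints} (via $\phi_\beta(z)=-\overline{\phi_\alpha(\bar z)}$) shows that their quadratic limits are automatically real and nonzero, being negatives of those at $0$ and $a$.

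\textbf{Step 4: the converse inclusion.} Each displayed form has poles only at the punctures, of order exactly two. It satisfies the transformation law by Step 2, and it has real nonzero quadratic limits at every puncture by Step 3. Hence it lies in $\cQ$, which gives the equality. Note that $\lambda$ must in fact be nonzero, so the statement's $\lambda\in\bR$ should be read as $\lambda\in\bR^*$.
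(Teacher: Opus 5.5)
Your proposal is correct and follows the paper's route: force $\deg P=4$, pair the zeros under $\cJ$, show the coefficient of the regrouped numerator is purely imaginary, and impose real nonzero quadratic limits only at $0$ and $a$. You read off $\deg P=4$ directly from the double pole at $\infty$ where the paper uses Poincar\'e--Hopf, which is a harmless variation, and your observation that $\lambda\neq 0$ is forced is a valid sharpening of the statement.

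The one slip is in Step~2. The factor relating your leading coefficient $c$ to the coefficient of $(z-\alpha_1)(z-\alpha_2)(\overline{\alpha_1}z+1)(\overline{\alpha_2}z+1)$ is $1/\overline{\alpha_1\alpha_2}$, which is not real. So the transformation law actually gives $c\,\alpha_1\alpha_2\alpha_3\alpha_4=-\overline{c}$, and only the regrouped coefficient $c/\overline{\alpha_1\alpha_2}$ is purely imaginary. The required minus sign comes from the law $\cJ^{*}Q=-\overline{Q}$ itself, not from the pullback computation, which produces no sign.
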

     \begin{proof}
       Let $Q\in \cQ$. Since $Q$ has poles of order two at the punctures and since it is meromorphic, it must be of the form
    \begin{equation*}
        Q=\frac{P(z)}{z^2(z-a)^2(az+1)^2}dz^2,
    \end{equation*}
    where $P$ is a polynomial with $P(0),P(a),P\left(-\frac{1}{a}\right)\neq 0$. By the Poincaré-Hopf theorem applied to the maximal principal foliation associated with this quadratic differential we obtain
    \begin{equation*}
        2=\cX(\bS^2)=4-\sum_{\{\alpha,P(\alpha)=0\}}\frac{n_{\alpha}}{2},
    \end{equation*}
    where $n_{\alpha}$ is the multiplicity of the zero $\alpha$ of $P$. Therefore we must have that $\sum_{\{\alpha,P(\alpha)=0\}}n_{\alpha}=4$
     which implies that $\deg(P)=4$. We can therefore write
    \begin{equation*}
        Q=C\frac{(z-\alpha_1)(z-\alpha_2)(z-\alpha_3)(z-\alpha_4)}{z^2(z-a)^2(az+1)^2}dz^2,\quad \alpha_j\neq 0,a,-\frac{1}{a}.
    \end{equation*}
    The transformational property of Proposition \ref{transformationHopfgeneralized} implies that
    \begin{equation*}
        C\prod_{j=1}^4\alpha_j=-\overline{C},\quad \{\alpha_j\}_{j=1}^4=\left\lbrace-\frac{1}{\overline{\alpha_k}}\right\rbrace_{k=1}^4.
    \end{equation*}
    Since $\alpha_j\neq -\frac{1}{\overline{\alpha_j}}$ it must be the case that up to a permutation $\alpha_3=-\frac{1}{\overline{\alpha_1}}$, $\alpha_4=-\frac{1}{\overline{\alpha_2}}$ so we can write
\begin{equation*}
    Q=\frac{C}{\overline{\alpha_1\alpha_2}}\frac{(z-\alpha_1)(z-\alpha_2)(\overline{\alpha_1}z+1)(\overline{\alpha_2}z+1)}{z^2(z-a)^2(az+1)^2}dz^2,\quad \frac{C}{\overline{\alpha_1\alpha_2}}=-\overline{\left(\frac{C}{\overline{\alpha_1\alpha_2}}\right)}.
\end{equation*}
This implies $\frac{C}{\overline{\alpha_1\alpha_2}}=i\lambda$ with $\lambda\in \bR$. In order to satisfy the closed curvature condition it is sufficient to impose the limit condition on $Q$ at the punctures $z=0$ and $z=a$. Therefore we conclude that the space $\cQ$ is described as
\begin{equation*}
    \cQ=\left\lbrace i\lambda \frac{(z-\alpha_1)(z-\alpha_2)(\overline{\alpha_1}z+1)(\overline{\alpha_2}z+1)}{z^2(z-a)^2(az+1)^2}dz^2\mid \lambda\in \bR,\  \alpha_1\alpha_2,(a-\alpha_1)(a-\alpha_2)(a\overline{\alpha_1}+1) (a\overline{\alpha_2}+1)\in i\bR^{*}\right\rbrace.
\end{equation*}
     \end{proof}
      
\begin{exam}\label{Spaceofquadraticdifferentialswithclosedcurvaturelines}
\normalfont{The space $\cQ$ is non-empty, for instance the following quadratic differential whose maximal principal foliation is depicted in Figure \ref{fig:quadratic differential with closed curvature lines} belongs to the space $\cQ$:}
\begin{equation*}
    Q_{*}=i\frac{(z-\alpha)\left(\overline{\alpha}z+1\right)^2}{z^2(z-a)^2\left(az+1\right)^2}dz^2,\quad \alpha=te^{i\frac{\pi}{4}},\quad a\in \bR^{*},\ 2t(1-t^2)a^2+\sqrt{2}(t^4-4t^2+1)a-2t(1-t^2)=0.
\end{equation*}
\end{exam}

Notice that the Hopf differential of a non-orientable minimal twice-punctured projective space with finite total curvature and closed curvature lines around the ends must belong to this space. Nonetheless it is still a problem to determine which of these quadratic differentials can be realized as the Hopf differential of such a non-orientable minimal immersion.

It is possible to check that the Hopf differential of the Kato-Hamada surfaces have a pole of order two at each end, and moreover there is sufficiently freedom in the space of parameters in order to find closed curvature lines around their ends. This suggests that the Kato-Hamada surfaces are potentially good models of non-orientable free boundary minimal surfaces in the unit ball $\bB^3.$

\printbibliography

\end{document}